\RequirePackage{fix-cm}
\documentclass[jap,press]{apt}
\firstpage{1}
\numberwithin{equation}{section}
\usepackage{mathtools}
\usepackage{enumitem}
\usepackage{array}
\usepackage{needspace}
\usepackage[numbers]{natbib}
\usepackage{etoolbox}
\usepackage[colorlinks=true,linkcolor=blue,urlcolor=blue,citecolor=blue]{hyperref}

\renewcommand{\crnotice}{}
\paperno{}
\shorttitle{Minorization and ergodicity with countable switching}
\patchcmd{\ams}{2010}{2020}{}{\errmessage{MSC year patch failed}}
\patchcmd{\ams}{2010}{2020}{}{\errmessage{MSC year patch failed}}
\setlist{itemsep=2pt,topsep=4pt,parsep=0pt,partopsep=0pt}
\hypersetup{
 pdftitle={Minorization and Exponential Ergodicity for Degenerate Diffusions with Countable Switching},
 pdfsubject={Ergodicity of degenerate regime-switching diffusions},
 pdfkeywords={regime-switching diffusion, degenerate diffusion, local minorization,
 controllability, exponential ergodicity, state-dependent switching}
}

\newenvironment{remark*}[1][]{\par\addvspace{\medskipamount}\noindent
 \textbf{Remark.}\ifstrempty{#1}{}{\textit{ (#1.)}}\ \ignorespaces}
 {\par\medbreak}
\newenvironment{proposition*}[1][]{\par\addvspace{\medskipamount}\noindent
 \textbf{Proposition.}\ifstrempty{#1}{}{\textup{ (#1.)}}\ \itshape\ignorespaces}
 {\par\normalfont\medbreak}

\newcommand{\R}{\mathbb R}

\newcommand{\Nzero}{\mathbb N_0}
\newcommand{\Sset}{\mathcal S}
\newcommand{\one}{\mathbf 1}
\newcommand{\Leb}{\operatorname{Leb}}

\newcommand{\B}{\mathcal B}
\newcommand{\A}{\mathcal A}

\newcommand{\Ran}{\operatorname{Ran}}
\newcommand{\PP}{\mathbb P}
\newcommand{\E}{\mathbb E}

\begin{document}
\raggedbottom
\title{Minorization and exponential ergodicity\\
for degenerate diffusions with countable switching}
\par
\authornames{YUZHONG CHENG}
\authoronetwo[Kyushu University]{Yuzhong Cheng}
\addressone{Institute of Mathematics for Industry, Kyushu University,
744 Motooka, Nishi-ku, Fukuoka 819-0395, Japan.}
\emailtwo{cheng.yuzhong.451@m.kyushu-u.ac.jp}
\hypersetup{pdfauthor={Yuzhong Cheng}}

\begin{abstract}
We establish a local-minorization criterion for diffusions with countably
many state-dependent switching regimes, even when every frozen transition
law is singular. One regular controlled path yields a common absolutely
continuous component for nearby initial states, with rate positivity
required only at the switching locations. The proof controls a family of
Brownian perturbations and its derivatives while retaining a path
restriction for the switching likelihood. Controlled entrance and a joint
Lyapunov inequality then give weighted exponential ergodicity. A concrete
coefficient and rank criterion and degenerate network examples illustrate
the result.
\end{abstract}
\keywords{Regime-switching diffusion; degenerate diffusion; local minorization;
controllability; exponential ergodicity; state-dependent switching}
\ams{60J60}{60J25; 60J27}

\section{Introduction}\label{sec:introduction}
Regime-switching diffusions combine Brownian motion with a discrete
environment whose transition rates depend on the continuous state.
We consider
\[
dX_t=b_{\Lambda_{t-}}(X_t)\,dt+
\sigma_{\Lambda_{t-}}(X_t)\,dW_t,\qquad X_t\in\R^d,
\]
where $W$ is $m$-dimensional Brownian motion, $\Lambda$ takes values
in a countably infinite set $\Sset$, and its transition rate from
$i$ to $j$ at spatial position $x$ is $q_{ij}(x)$.
The precise construction is given in Section~\ref{sec:model}.
Our question is how the joint process can be exponentially ergodic
in weighted total variation when every frozen diffusion has singular
transition laws. The main result produces a common absolutely
continuous component from one regular controlled path. It requires
positivity of the selected switching rates only at the switching
locations.

Frozen-regime regularization is a common route to total-variation
ergodicity. For countably many state-dependent regimes,
\citet[Assumption~4.1 and Theorem~6.3]{XiZhu2017} use frozen strong
Feller properties and transition densities, irreducibility, and a
Foster--Lyapunov condition; \citet[Assumption~3.2 and Theorem~3.2]{Kunwai2022}
uses ellipticity and Lyapunov criteria.
For finitely many regimes and unbounded rates,
\citet[Theorem~3.4]{TongMajda2016} combine averaged dissipation with
minorization in an accessible frozen regime.
\citet{LazicSandric2022} obtain subgeometric total-variation results
with irreducibility verified by local ellipticity and accessibility.
For smooth densities, \citet[(14) and Theorem~4.6]{HuNualartSunXie2019}
assume a uniform H\"ormander condition in every regime.

Other approaches use contraction or impose minorization directly
on the joint process. \citet[Theorem~3.8]{BasakBisiGhosh1999} use
asymptotic flatness of an embedded flow to obtain uniqueness and weak
convergence without ellipticity or frozen densities.
\citet{Shao2015Wasserstein} uses coupling and $M$-matrix criteria
for Wasserstein exponential ergodicity, including countably many
regimes. \citet{CloezHairer2015} obtain Wasserstein contraction from
contraction on average, and total-variation estimates with a frozen
small-set condition. For finite-state independent switching,
\citet[Theorems~3.5 and~4.5]{LiXi2022TV} combine scalar drift conditions
with minorization of the joint process.
Switching itself also produces regularization in deterministic-flow
models: \citet{BakhtinHurth2012} prove uniqueness and absolute
continuity of invariant measures, and \citet{BenaimEtAl2015} obtain
local minorization and exponential total-variation convergence under
accessibility and bracket conditions.

Singular covariance alone does not preclude frozen regularization.
The switching Hamiltonian systems of \citet{XiZhuWu2021}, forced in
the velocity variables, and the switching SIR model of
\citet{JinLinPei2018}, driven by one Brownian motion, both use frozen
regularization in their ergodic arguments. Our network example in
Section~\ref{sec:examples} has the stronger obstruction that every
frozen transition law is singular. Its continuous component is
\[
dX_t=(A_{E_{t-}}X_t+c_{E_{t-}})\,dt
+[g_{E_{t-}}(X_t)+s_{N_{t-}}]\mathbf v_1\circ dW_t,
\qquad X_t\in\R^N,\quad N\ge3.
\]
Here $E_t$ selects an edge, $N_t$ is a countably valued noise level,
and $\mathbf v_1$ is the first coordinate vector. The matrices $A_e$
combine coupling along the active edge, damping at node~1, and a
common linear growth term. The model uses the pairwise interactions
of \citet{BoydEtAl2006} and switching pinning of \citet{HanEtAl2014},
with Brownian forcing at node~1. An unforced projection evolves
deterministically in every fixed regime, so each frozen transition
law is supported on an affine hyperplane; see \eqref{eq:frozen-projection}.
Switching must spread both noise and damping through the network.

Our strategy is to establish local minorization for such degenerate
systems by transferring regular controlled endpoints to the stochastic
process. Controlled entrance and a joint Lyapunov inequality then
give exponential ergodicity through Harris' theorem.

The main contributions are as follows. \textup{(1)}
Theorem~\ref{prop:local} gives local minorization from one admissible
control with a surjective endpoint differential on a prescribed
switching schedule. \textup{(2)} The stochastic argument preserves
an entire $C^1$ family of controlled paths under multiplicative
Brownian forcing, uniformly for nearby initial points and switching
times. This controls both the endpoint Jacobian and the
state-dependent switching likelihood within the same path
neighborhood. \textup{(3)} A coefficient representation, a rank
condition across regimes, and positive-rate graph accessibility give
a concrete sufficient criterion.
Under nonexplosion, they yield one minorization time and target
measure for every compact initial set; only the lower-bound constant
depends on that set (Theorem~\ref{thm:common-time-minorization}).

The coefficient criterion assumes, on a finite set $\mathcal J$,
\[
b_i^\circ(x)=A_ix+c_i+\sigma_i(x)f_i(x),\qquad
\sigma_i(x)R_i(x)=B_i,
\]
with constant $A_i,B_i,c_i$, $C^2$ maps $f_i,R_i$, and Stratonovich
drift $b_i^\circ$. The control $u=R_i(\phi)v-f_i(\phi)$ reduces the
controlled equation to $\dot\phi=A_i\phi+c_i+B_iv$.
A rank condition combining different regimes then applies even when
no individual linear system is controllable.
Proposition~\ref{prop:regular-word} adapts the switched-linear
arguments of \citet[Section~4.2, (25)--(33)]{SunGeLee2002},
\citet{LiuLinChen2013}, and \citet[Theorem~3.1]{Sun2007Constrained}
to respect the positive-rate graph and lift the controls to the
original equation. This is a structured sufficient criterion in the
direction of the conjectural cross-regime bracket extension discussed by
\citet[Section~5, discussion item~3]{TongMajda2016}.

The control-to-minorization principle is established in earlier work.
\citet{BenArousLeandre1991} and
\citet[Lemma~4.5 and Appendix~A]{HerzogMattingly2015} relate diffusion
density positivity to regular controlled endpoints.
\citet[Theorem~1.1 and Proposition~3.2]{Shirikyan2017} uses
controllability and finite-dimensional changes of variables for
total-variation mixing of discrete-time random systems.
\citet{NersesyanRaquepas2021} obtain exponential total-variation
mixing for degenerate Poisson-driven SDEs from dissipativity and
approximate and solid controllability; their Lemma~2.3 gives a common
component near a regular endpoint and a vector of jump times.
\citet{MertzNersesyanRissel2024} develop a controllability criterion
for exponential mixing of constrained random dynamical systems,
including white-noise applications. Their framework assumes a
continuous resolving map with local differentiability in the noise.

For the present model, applying this approach requires a lower bound
that remains valid after averaging over the state-dependent switching
law. Lemma~\ref{lem:classical-family} combines the flow regularity of
\citet{Kunita1986Flows} and the support theorem of
\citet{Aida1990FlowSupport} to control paths and their parameter
derivatives simultaneously, with continuity in the initial point and
switching times. Trajectory support alone does not supply this
derivative control. Lemma~\ref{lem:averaging} turns these estimates
into a common Lebesgue lower bound while retaining the path
restriction needed for the switching likelihood.
Only finitely many regimes enter this construction. A proper joint
Lyapunov function handles the countable regime tail, without uniform
coefficient bounds over $\Sset$; the network has unbounded
regime-dependent noise amplitudes. The full transition law can still
retain a singular component.

The paper is organized as follows. Section~\ref{sec:model} states
the local-minorization theorem, its ergodic consequence, and the
coefficient criterion. Section~\ref{sec:rank-cycle} proves the
control construction, local minorization, and controlled entrance.
Section~\ref{sec:examples} gives examples separating the criteria
from global rate positivity and frozen-regime regularization.
Appendix~\ref{app:analytic} contains the analytic lemmas.

\section{Assumptions and main results}\label{sec:model}
\subsection{Model and local assumptions}
Identify $\Sset$ with $\Nzero$ and give it the discrete topology.
Set $E=\R^d\times\Sset$.
For each $i\in\Sset$, the drift $b_i:\R^d\to\R^d$ and diffusion
coefficient $\sigma_i:\R^d\to\R^{d\times m}$ are Borel maps.
The switching rates $q_{ij}:\R^d\to[0,\infty)$, $j\ne i$, are Borel and satisfy
\[
q_i(x):=\sum_{j\ne i}q_{ij}(x)<\infty,\qquad q_{ii}(x):=-q_i(x).
\]
For each $x$, $Q(x)=(q_{ij}(x))_{i,j\in\Sset}$ is a conservative
rate matrix. For fixed $i$, list $\Sset\setminus\{i\}$ in increasing
order as $j_1,j_2,\ldots$. Define consecutive left-closed, right-open
intervals of lengths $q_{i j_r}(x)$ and the corresponding jump function by
\[
\begin{aligned}
\Gamma_{i j_r}(x)
&=\left[\sum_{\ell=1}^{r-1}q_{i j_\ell}(x),
        \sum_{\ell=1}^{r}q_{i j_\ell}(x)\right),\qquad r\ge1,\\
h(x,i,u)&=\sum_{j\ne i}(j-i)\one_{\Gamma_{ij}(x)}(u),\qquad u\ge0.
\end{aligned}
\]
This is the Poisson representation used in \citet[Section~2]{XiZhu2017}.
The Borel function $h$ vanishes for $u\notin[0,q_i(x))$.

On a complete filtered probability space
$(\Omega,\mathcal F,(\mathcal F_t)_{t\ge0},\PP)$ satisfying the usual
conditions, let $W$ be an $m$-dimensional $(\mathcal F_t)$-Brownian
motion and let $\mathsf N(dt,du)$ be an independent $(\mathcal F_t)$-Poisson
random measure on $[0,\infty)^2$ with compensator $dt\,du$.
For each initial state $(x,i)\in E$, we study
\begin{equation}\label{eq:model-general}
\begin{aligned}
dX_t&=b_{\Lambda_{t-}}(X_t)\,dt+
       \sigma_{\Lambda_{t-}}(X_t)\,dW_t,\qquad X_0=x,\\
d\Lambda_t&=\int_0^\infty h(X_t,\Lambda_{t-},u)\,\mathsf N(dt,du),
       \qquad \Lambda_0=i.
\end{aligned}
\end{equation}
Let $\zeta$ denote the maximal lifetime of the solution.
For distinct $i,j$, the transition count and its compensator on $[0,\zeta)$ are
\[
N_{ij}(t):=\sum_{0<s\le t}\one_{\{\Lambda_{s-}=i,\,\Lambda_s=j\}},
\qquad
\int_0^t\one_{\{\Lambda_{s-}=i\}}q_{ij}(X_s)\,ds,
\]
respectively.
For $g:\Sset\to\R$ and $\varphi\in C^2(\R^d)$, write
\[
\begin{aligned}
Qg(x,i)&=\sum_{j\ne i}q_{ij}(x)(g(j)-g(i)),\\
\mathcal L_i\varphi(x)&=b_i(x)\cdot\nabla\varphi(x)
+\tfrac12\operatorname{tr}\bigl(a_i(x)D^2\varphi(x)\bigr),\qquad
 a_i(x)=\sigma_i(x)\sigma_i(x)^\top,
\end{aligned}
\]
where the sum defining $Qg$ is required to converge absolutely.
The operator $\mathcal L_i$ describes the diffusion with regime $i$ held fixed.

For coefficient maps $F:\R^d\to\R^q$, $DF(x)$ denotes the
$q\times d$ Jacobian with respect to $x$.
Write $\sigma_i^{(k)}$ for the $k$th column of $\sigma_i$ and set
\begin{equation}\label{eq:corrected}
b_i^\circ=b_i-\frac12\sum_{k=1}^mD\sigma_i^{(k)}\,\sigma_i^{(k)}.
\end{equation}
\Needspace{6\baselineskip}
\begin{assumpt}[Regularity and switching rates]\label{ass:local-construction}
\leavevmode
\begin{enumerate}[label=\textup{(\roman*)},leftmargin=*]
\item For every $i\in\Sset$,
$b_i\in C^{d+3}(\R^d;\R^d)$ and
$\sigma_i\in C^{d+4}(\R^d;\R^{d\times m})$.
\item Each $q_{ij}$, $i\ne j$, is continuous and nonnegative, and
\[
\sup_{x\in K,\,i\in F}q_i(x)<\infty
\quad\text{for every compact }K\subset\R^d
\text{ and finite }F\subset\Sset.
\]
\end{enumerate}
\end{assumpt}
The differentiability orders in item~(i) ensure the $C^1$ support
property for the augmented $2d$-dimensional flow in
Lemma~\ref{lem:classical-family}.

The coefficients in item~(i) are locally Lipschitz. The interlacing
construction of \citet[proof of Theorem~2.1, Step~1, (2.2)--(2.10)]{XiZhu2017},
localized to compact subsets of $E$, therefore gives a pathwise unique
maximal strong Markov solution $Z_t=(X_t,\Lambda_t)$ with lifetime
$\zeta$. Its lifetime cannot end while it remains in a compact set.
Write $\PP_z,\E_z$ for its law and expectation from $z$, and
$P_t(z,A)=\PP_z(t<\zeta,\,Z_t\in A)$.

For $f:E\to\R$ with $f(\cdot,i)\in C^2(\R^d)$ for every $i$, define
\begin{align}
J_f(x,i)&=\sum_{j\ne i}q_{ij}(x)|f(x,j)-f(x,i)|,
\label{eq:local-jump-sum}\\
\A f(x,i)&=\mathcal L_i f(\cdot,i)(x)
+\sum_{j\ne i}q_{ij}(x)(f(x,j)-f(x,i)),
\label{eq:joint-generator}
\end{align}
whenever $J_f(x,i)<\infty$. A nonnegative function is called proper
when all its finite sublevel sets are compact in $E$.

\subsection{Controlled paths and local minorization}\label{subsec:controlled-paths}
For $T>0$, a \emph{prescribed switching schedule} on $[0,T]$ is
a deterministic piecewise constant function
$\lambda=(\lambda_t)_{0\le t\le T}$ with values in $\Sset$, specified
by a finite regime sequence
$\boldsymbol\iota=(\iota_0,\ldots,\iota_n)\in\Sset^{n+1}$,
$n\in\Nzero$, with $\iota_{r-1}\ne\iota_r$ for $1\le r\le n$, and switching times
\[
\tau\in\Delta_n(T):=\{\tau\in\R^n:0<\tau_1<\cdots<\tau_n<T\},
\qquad \tau_0=0,\quad\tau_{n+1}=T,
\]
through
\[
\lambda_t=\lambda_t^{\boldsymbol\iota,\tau}=\iota_r
\quad\text{for }t\in[\tau_r,\tau_{r+1}),\quad 0\le r\le n,
\qquad \lambda_T=\iota_n.
\]
Here \emph{prescribed} means that $n,\boldsymbol\iota,\tau$ are
deterministic choices made before solving the controlled equation.
These data are chosen separately for each prescribed switching schedule.
Write $\Leb_n$ for Lebesgue measure on $\R^n$, with
$\Delta_0(T)=\{()\}$ and $\Leb_0(\{()\})=1$.

Given $x\in\R^d$, a prescribed switching schedule $\lambda=\lambda^{\boldsymbol\iota,\tau}$
on $[0,T]$, and a control $u\in L^2([0,T];\R^m)$, let
$\phi^{x,\lambda,u}$ be the maximal absolutely continuous solution of
the controlled equation below. We write $\phi=\phi^{x,\lambda,u}$
when these data are fixed. The endpoint map is defined whenever
this solution exists throughout $[0,T]$:
\begin{equation}\label{eq:skeleton}
\begin{aligned}
\dot\phi_t&=b_{\lambda_t}^\circ(\phi_t)
 +\sigma_{\lambda_t}(\phi_t)u_t,\qquad \phi_0=x,\\
\mathcal E_{x,T,\lambda}(u)&=\phi_T.
\end{aligned}
\end{equation}
For the prescribed switching schedule $\lambda$, call $u$ \emph{admissible from
$x$ on $[0,T]$} if $\phi^{x,\lambda,u}$ exists on $[0,T]$ and
\begin{equation}\label{eq:admissible-controlled-path}
q_{\iota_{r-1}\iota_r}(\phi^{x,\lambda,u}_{\tau_r})>0,
\qquad 1\le r\le n.
\end{equation}
The resulting joint controlled path is
$t\mapsto(\phi^{x,\lambda,u}_t,\lambda_t)$, from
$(x,\iota_0)$ to $(\phi^{x,\lambda,u}_T,\iota_n)$.
For $n=0$, the rate condition is empty.
An admissible control $u$ is \emph{regular} for this initial point
and prescribed switching schedule if
\begin{equation}\label{eq:regular-controlled-path}
\Ran D_u\mathcal E_{x,T,\lambda}(u)=\R^d.
\end{equation}

The differential in \eqref{eq:regular-controlled-path} varies the
Brownian control $u$ while holding $x,T,\lambda$ fixed. In particular,
if $\sigma_i\equiv0$ in every visited regime, this differential is
zero. 
Write $\Leb$ for Lebesgue measure on $\R^d$ and $\B(\R^d)$ for
its Borel $\sigma$-field.

\begin{thm}[Fixed-time local minorization]\label{prop:local}
Suppose Assumption~\ref{ass:local-construction} holds.
Fix $p\in\R^d$, $T_0>0$, and a prescribed switching schedule
$\lambda=\lambda^{\boldsymbol\iota,\tau^*}$ on $[0,T_0]$.
Let $u_*\in L^2([0,T_0];\R^m)$ be admissible and regular from
$p$ under this prescribed switching schedule, and set
$\phi=\phi^{p,\lambda,u_*}$ and $y_*=\phi_{T_0}$.
There are a neighborhood $U$ of $p$, a ball $B_*$ about $y_*$, and
$c_*>0$ such that
\begin{equation}\label{eq:local}
P_{T_0}((x,\iota_0),D\times\{\iota_n\})
\ge c_*\Leb(D\cap B_*),
\qquad x\in U,\quad D\in\B(\R^d).
\end{equation}
\end{thm}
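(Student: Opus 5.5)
Our plan is to restrict attention to the event on which the regime process follows the prescribed regime sequence $\boldsymbol\iota$, with switching times $\tau$ in a small neighborhood $V\subset\Delta_n(T_0)$ of $\tau^*$ and no other switches on $[0,T_0]$. On this event $X$ coincides with the frozen-schedule diffusion $X^{x,\tau}$ driven by $\lambda^{\boldsymbol\iota,\tau}$. We write this diffusion in Stratonovich form, which is why the controlled equation \eqref{eq:skeleton} uses $b^\circ$. Conditional on $W$, the switching times have an explicit likelihood. Its density with respect to $\Leb_n(d\tau)$ is
\[
\prod_{r=1}^n q_{\iota_{r-1}\iota_r}(X^{x,\tau}_{\tau_r})\;
\exp\Bigl(-\sum_{r=0}^{n}\int_{\tau_r}^{\tau_{r+1}} q_{\iota_r}(X^{x,\tau}_s)\,ds\Bigr).
\]
This follows from the Poisson representation, since the thinning intervals $\Gamma_{ij}$ make the jump $i\to j$ occur at rate $q_{ij}(X_s)$. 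It gives
\[
P_{T_0}((x,\iota_0),D\times\{\iota_n\})\ge \E\int_V \one_D(X^{x,\tau}_{T_0})\,\rho(x,\tau,W)\,d\tau,
\]
where $\rho$ denotes the likelihood above.

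We then localize to a path neighborhood. Admissibility gives $q_{\iota_{r-1}\iota_r}(\phi_{\tau^*_r})>0$. By continuity of $q$ and of $\phi$ in $(x,\tau)$, there are $\delta>0$ and neighborhoods $U,V$ such that every path uniformly $\delta$-close to $\phi^{x,\lambda^{\boldsymbol\iota,\tau},u_*}$ has the following properties. It stays in a compact set $K$. Its rates at the switching locations are bounded below by some $\eta>0$. Its total exit rates are bounded above, by Assumption~\ref{ass:local-construction}(ii) applied to $K$ and the finite set $\{\iota_0,\dots,\iota_n\}$. On the event that $X^{x,\tau}$ stays $\delta$-close, the likelihood is therefore at least $\eta^n e^{-CT_0}$. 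This lower bound is the ``path restriction'' that must be kept through the averaging step.

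The core step produces a Lebesgue lower bound for $\E[\one_D(X^{x,\tau}_{T_0});\,\text{path close}]$ that is uniform in $x\in U$ and $\tau\in V$. Regularity means $D_u\mathcal E$ is surjective at $u_*$. We choose finitely many directions $h_1,\dots,h_d\in L^2$, which can be taken smooth, whose images under $D_u\mathcal E$ span $\R^d$. Surjectivity persists for $(x,\tau)$ near $(p,\tau^*)$ by continuity of the linearized equation. We then consider the family of Brownian perturbations $W+\int_0^\cdot(u_*+\sum_k\theta_k h_k)\,ds$, with $\theta$ in a small ball $\Theta\subset\R^d$. By Girsanov, the perturbed law has a density with respect to the original law. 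For fixed $\theta$, the lower bound on this density is uniform on the event that $W$ is in a suitable set. Averaging over $\theta\in\Theta$ writes a lower bound for the law of $X_{T_0}$ as an integral over $\theta$. We apply the change of variables $\theta\mapsto X^{x,\tau,\theta}_{T_0}$, which requires that its Jacobian $\partial_\theta X_{T_0}$ be nondegenerate and close to $\partial_\theta\phi_T$. This is where we use the support theorem for the augmented $2d$-dimensional flow. Concretely, we invoke Lemma~\ref{lem:classical-family} (Kunita flow regularity combined with the Aida support theorem). It says that, with probability bounded below uniformly in $(x,\tau)\in U\times V$, the stochastic map $(\theta)\mapsto X^{x,\tau,\theta}$, its $\theta$-derivative, and its paths are $C^1$-close to the deterministic family $\phi^{x,\lambda^\tau,u_*+\sum\theta_kh_k}$. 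On that event, the inverse function theorem with uniform constants makes $\theta\mapsto X_{T_0}$ a diffeomorphism onto a set containing a fixed ball $B_*$ about $y_*$, with Jacobian bounded above. Lemma~\ref{lem:averaging} then converts this into $\E[\cdots]\ge c\,\Leb(D\cap B_*)$ while retaining the path-closeness indicator. Integrating over $\tau\in V$ and multiplying by the likelihood bound gives \eqref{eq:local} with $c_*=c\,\eta^ne^{-CT_0}\Leb_n(V)$.

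The main obstacle is this last step. We need derivative-level closeness, uniform in $x$ and $\tau$, not just support of trajectories. We also have to track the Girsanov density and the likelihood restriction at the same time, since the likelihood depends on the path and cannot be bounded below without the closeness event. We would first try to make the switching times an additional parameter of the flow. The plan is to treat $\tau$ by piecewise composition of frozen flows, which are continuous in $\tau$ in $C^1$ topology, so that the support estimates of Lemma~\ref{lem:classical-family} apply on the compact parameter set $\bar U\times\bar V\times\bar\Theta$.
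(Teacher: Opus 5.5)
Your proposal follows the paper's proof essentially step for step. It restricts the switching-time likelihood to a path tube with uniform rate bounds (Lemma~\ref{lem:path-expansion}), obtains $C^1$ family support uniformly in $(x,\tau)$ by composing frozen flows (Lemma~\ref{lem:classical-family}), applies the quantitative inverse function theorem, uses Cameron--Martin averaging that retains the tube indicator (Lemma~\ref{lem:averaging}), and finally integrates over a switching-time box.

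The one deviation is in the shift, and it does not match those lemmas as stated. You put $u_*$ itself into the Girsanov shift and take smooth directions. The paper shifts only by the step-function directions $h_\xi$ and uses the support statement with $u_0=u_*$ to place $W$ near $\int_0^\cdot u_*$. Step functions are required by the frozen-flow construction, and the density then involves only the $d$ Gaussian variables $\int_0^{T_0}\dot e_j\cdot dW$. So you should either use step directions and shift only by $h_\xi$, or keep your full shift, use step directions, and also bound $\int_0^{T_0}u_*\cdot dW$ on your event before averaging. The latter is straightforward.
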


Theorem~\ref{prop:local} is the main local result. Its rate condition
\eqref{eq:admissible-controlled-path} is imposed only at the selected
switching locations. 

\subsection{Exponential ergodicity from controlled entrance}
\begin{assumpt}\label{ass:joint-lyapunov}
There are a proper function $V:E\to[1,\infty)$ with
$V(\cdot,i)\in C^2(\R^d)$ for every $i\in\Sset$, and constants
$a>0$, $b\ge0$, such that $J_V$ is locally bounded and
\begin{equation}\label{eq:joint-lyapunov-drift}
\A V\le-aV+b\quad\text{on }E.
\end{equation}
\end{assumpt}

For a signed measure $\mu$ with $\int_E V\,d|\mu|<\infty$,
write $\|\mu\|_V=\sup_{|f|\le V}|\int_Ef\,d\mu|$, with the
supremum over measurable real functions.

Write $C_R=\{z\in E:V(z)\le R\}$ for the Lyapunov function in
Assumption~\ref{ass:joint-lyapunov}. 

\begin{thm}[Ergodicity from controlled paths]\label{thm:controlled-ergodicity}
Suppose Assumptions~\ref{ass:local-construction} and
\ref{ass:joint-lyapunov} hold. Assume there are $(p_*,\ell_*)\in E$
and $R>2b/a$, with $C_R\ne\varnothing$, such that:
\begin{enumerate}[label=\textup{(\roman*)},leftmargin=*]
\item There exist $T_0>0$, a prescribed switching schedule $\lambda^*$ on $[0,T_0]$
with $\lambda^*_0=\ell_*$, and a control
$u_*\in L^2([0,T_0];\R^m)$ that is admissible and regular from
$p_*$ under $\lambda^*$.
\item For every open neighborhood $U$ of $p_*$, there exists
$T_{\rm a}>0$ such that, for each $z=(x,i)\in C_R$, one can choose
a prescribed switching schedule $\lambda^z$ on $[0,T_{\rm a}]$ and a control
$u^z\in L^2([0,T_{\rm a}];\R^m)$ admissible from $x$ under
$\lambda^z$, satisfying
\[
\lambda^z_0=i,\qquad \lambda^z_{T_{\rm a}}=\ell_*,\qquad
\phi^{x,\lambda^z,u^z}_{T_{\rm a}}\in U.
\]
\end{enumerate}
Then the joint process is nonexplosive and has a unique invariant
probability $\pi$, with $\pi(V)\le b/a$. There are constants
$M<\infty$ and $\rho>0$ such that
\begin{equation}\label{eq:weighted-ergodicity}
\|P_t(z,\cdot)-\pi\|_V\le MV(z)e^{-\rho t},
\qquad t\ge0,\quad z\in E.
\end{equation}
\end{thm}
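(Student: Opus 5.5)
The plan is to prove Theorem~\ref{thm:controlled-ergodicity} through Harris' theorem in its weighted form (Hairer--Mattingly): a geometric drift for a skeleton chain together with a minorization on the sublevel set $C_R$ with $R>2b/a$. There are five steps.

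First, nonexplosion and the drift bound. I would apply It\^o's formula to $e^{at}V(Z_t)$ up to the exit time $\theta_k$ of $C_k$. Since $V$ is proper, $C_k$ is compact and the process cannot die inside it. Local boundedness of $J_V$ makes the jump martingale part a true martingale up to $\theta_k$, so
\[
\E_z\bigl[e^{a(t\wedge\theta_k)}V(Z_{t\wedge\theta_k})\bigr]\le V(z)+\tfrac{b}{a}\bigl(e^{at}-1\bigr).
\]
On $\{\theta_k\le t\}$ we have $V(Z_{\theta_k})\ge k$. Hence $\PP_z(\theta_k\le t)\le C(t,z)/k\to0$, which gives $\zeta=\infty$. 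Fatou's lemma then yields
\[
P_tV\le e^{-at}V+\frac{b}{a}.
\]
Fixing $h>0$, the skeleton kernel $P_h$ satisfies $P_hV\le\gamma V+K$ with $\gamma=e^{-ah}$ and $K=b/a$.

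Second, local minorization near the target. Apply Theorem~\ref{prop:local} to $(p_*,\ell_*,u_*)$ from (i). This gives a neighborhood $U$ of $p_*$, a ball $B_*$ and $c_*>0$ with
\[
P_{T_0}((x,\ell_*),D\times\{\iota_n\})\ge c_*\Leb(D\cap B_*),\qquad x\in U.
\]
Set $\nu=\Leb(\cdot\cap B_*)\otimes\delta_{\iota_n}/\Leb(B_*)$.

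Third, controlled entrance, which turns the deterministic paths of (ii) into a uniform positive probability. For this $U$, (ii) provides $T_{\rm a}$ and, for each $z\in C_R$, an admissible path ending in $U\times\{\ell_*\}$. The key claim is
\[
\inf_{z\in C_R}P_{T_{\rm a}}\bigl(z,U\times\{\ell_*\}\bigr)>0.
\]
For a single $z$ I would proceed as follows. Use the Stroock--Varadhan support theorem on each interval between switches, with the frozen Stratonovich dynamics. Require the Poisson clock to produce exactly the prescribed switches in small windows around the $\tau_r$ and no others. The window probabilities are bounded below because $q_{\iota_{r-1}\iota_r}$ is continuous and positive at $\phi_{\tau_r}$, so it stays positive on a tube around the path. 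Perturbing the switching times inside these windows keeps the path close, by continuous dependence, and the endpoint remains in the open set $U$. This gives positivity for each fixed $z$.

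Uniformity is the main obstacle, since the paths in (ii) need not depend continuously on $z$. Positivity at a given $z_0$ is, however, an open condition: the same schedule and control from nearby $x$ still end in $U$ and keep positive rates, by continuous dependence on initial data. Moreover, the $\Lambda$-component is discrete, and $C_R$ compact meets only finitely many regimes. So I would instead show lower semicontinuity of $z\mapsto P_{T_{\rm a}}(z,U\times\{\ell_*\})$ on $C_R$. This should follow because the support-theorem lower bound obtained at $z_0$ holds uniformly on a neighborhood, via the same tube estimates with uniform constants. Compactness of $C_R$ then gives a positive infimum $\delta$.

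Fourth, combine. With $T=T_{\rm a}+T_0$, Chapman--Kolmogorov gives
\[
P_T(z,\cdot)\ge\delta\,c_*\Leb(B_*)\,\nu\qquad\text{for all } z\in C_R.
\]
Harris' theorem for $P_T$, with drift constants $e^{-aT}$ and $b/a$, applies because $R>2b/a$ exceeds $2K/(1-\gamma)$ asymptotically. It can be arranged exactly by using the continuous-time drift bound directly, or by replacing $V$ by a suitable scaling if needed. This yields a unique invariant $\pi$ and a geometric bound in $\|\cdot\|_V$ at times $kT$.

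Fifth, intermediate times and the moment bound. For $t=kT+s$ with $0\le s<T$, the bound $P_sV\le V+b/a\le(1+b/a)V$ transfers the estimate to all $t\ge0$ and gives \eqref{eq:weighted-ergodicity}. Finally, integrating the drift bound against $\pi$ and letting $t\to\infty$ gives $\pi(V)\le b/a$.
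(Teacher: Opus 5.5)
Your outline follows the paper's proof step for step. The common steps are: the stopped It\^o and Fatou argument for nonexplosion and the semigroup drift bound; Theorem~\ref{prop:local} at $(p_*,\ell_*)$; a uniform entrance probability on $C_R$ (the paper's Lemma~\ref{lem:controlled-entrance}, via support near one admissible path, the switching likelihood of Lemma~\ref{lem:path-expansion}, and a finite cover of $C_R$); the Markov property at $h=T_{\rm a}+T_0$; Harris' theorem for the $h$-skeleton; and interpolation to intermediate times. Your per-interval support theorem with switching windows is an acceptable substitute for the paper's single family-support lemma. However, the Harris step as you wrote it does not go through.

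With $K=b/a$ and $\gamma=e^{-aT}$, the Harris threshold $2K/(1-\gamma)$ equals $2b/(a(1-e^{-aT}))$. This is strictly above $2b/a$ whenever $b>0$, so $R>2b/a$ does not imply it, and "asymptotically" does not help at a fixed $T$. Multiplying $V$ by a constant rescales $K$ and the sublevel level together, so that alternative also fails. The remedy is already in your stopped inequality: letting the localization index tend to infinity by Fatou and dividing by $e^{at}$ gives $P_tV\le e^{-at}V+\frac ba(1-e^{-at})$. Hence $K_h=\frac ba(1-\gamma)$ and $2K_h/(1-\gamma)=2b/a<R$ exactly, which is how the paper uses the hypothesis.

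Two smaller points also need attention. First, the support theorem gives positivity from one initial point and one fixed schedule. Your uniform lower bound over nearby $x$, over switching times inside the windows, and over the random post-switch positions therefore also needs continuity in probability of the localized frozen solutions in these parameters. Combine that with support in a half-width tube at the reference; the paper gets this from Lemma~\ref{lem:classical-family}(ii)--(iii) with $e_j=0$. Second, your interpolation to $t=kT+s$ uses $\pi P_s=\pi$. This follows from $(\pi P_s)P_T=\pi P_s$ and uniqueness of the skeleton's invariant law, and it should be stated.
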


\begin{proof}[Proof of Theorem~\ref{thm:controlled-ergodicity}]
Fix $z\in E$ and let
$\tau_N=\inf\{t<\zeta:V(Z_t)\ge N\}$ for integers $N>V(z)$,
with $\inf\varnothing=\infty$.
Properness of $V$ and the local construction give $\tau_N\uparrow\zeta$
and $\tau_N<\zeta$ on $\{\zeta<\infty\}$.
The stopped Brownian integrand is bounded, and the compensation formula gives
\[
\E_z\sum_{0<s\le t\wedge\tau_N}|V(Z_s)-V(Z_{s-})|
 =\E_z\int_0^{t\wedge\tau_N}J_V(Z_{s-})\,ds
 \le t\sup_{\{V\le N\}}J_V<\infty.
\]
The compensation estimate includes the possibly unbounded exit
jump, so the stopped martingale terms have mean zero. It\^o's
formula and \eqref{eq:joint-lyapunov-drift} give
\[
\begin{aligned}
\E_z[e^{a(t\wedge\tau_N)}V(Z_{t\wedge\tau_N})]
 &\le V(z)+\frac ba(e^{at}-1),\\
\PP_z(\zeta\le t)\le\PP_z(\tau_N\le t)
 &\le\frac{V(z)+(b/a)(e^{at}-1)}{N}\longrightarrow0.
\end{aligned}
\]
Hence the process is nonexplosive. Fatou's lemma gives
\begin{equation}\label{eq:joint-semigroup-drift}
P_tV\le e^{-at}V+\frac ba(1-e^{-at}),\qquad t\ge0.
\end{equation}

Apply Theorem~\ref{prop:local} with $p=p_*$ and the prescribed switching schedule
$\lambda^*$ and control $u_*$ from item~(i). It gives an open
neighborhood $U$ of $p_*$, a probability
measure $\nu$ supported on a ball in the terminal regime, and
$\varepsilon_0>0$ such that
\[
P_{T_0}((x,\ell_*),\cdot)\ge\varepsilon_0\nu(\cdot),
\qquad x\in U.
\]
Choose $T_{\rm a}$ from item~(ii) for this $U$.
Lemma~\ref{lem:controlled-entrance} gives
$\alpha_R:=\inf_{z\in C_R}P_{T_{\rm a}}(z,U\times\{\ell_*\})>0$.
The Markov property, with $h=T_{\rm a}+T_0$, yields
\[
P_h(z,\cdot)\ge\alpha_R\varepsilon_0\nu(\cdot),\qquad z\in C_R.
\]
Put $\mathsf P=P_h$. By \eqref{eq:joint-semigroup-drift},
\[
\mathsf PV\le\gamma V+K_h,\qquad
\gamma=e^{-ah}\in(0,1),\qquad K_h=\frac ba(1-\gamma),\qquad
\frac{2K_h}{1-\gamma}=\frac{2b}{a}<R.
\]
Thus the sublevel set chosen in the theorem satisfies the Harris
threshold for this $h$. Reduce $\alpha_R\varepsilon_0$ if necessary
to obtain a minorization constant in $(0,1)$.
The Harris theorem of
\citet[Theorem~1.2 and Section~3.1]{HairerMattingly2011Harris}
therefore gives a unique invariant probability $\pi$ for $\mathsf P$,
with $\pi(V)<\infty$. Since $V\le1+V\le2V$, its weighted convergence
and the integrated drift bound give
\[
\|\mathsf P^n(z,\cdot)-\pi\|_V\le M_h\theta^nV(z),
\qquad
\pi(V)\le\frac{K_h}{1-\gamma}=\frac ba,
\]
for every $n\ge0$, with $M_h<\infty$ and $\theta\in(0,1)$.

For every $s\ge0$, $(\pi P_s)\mathsf P=(\pi\mathsf P)P_s=\pi P_s$,
so uniqueness gives $\pi P_s=\pi$; any invariant probability for the
full semigroup is likewise invariant for $\mathsf P$ and equals $\pi$.
For $t=nh+s$, $0\le s<h$, \eqref{eq:joint-semigroup-drift} gives
$P_sV\le(1+b/a)V$. With $\rho=-h^{-1}\log\theta>0$,
\[
\begin{aligned}
\|P_t(z,\cdot)-\pi\|_V
&=\|\bigl(\mathsf P^n(z,\cdot)-\pi\bigr)P_s\|_V\\
&\le(1+b/a)M_h\theta^nV(z)
 \le(1+b/a)M_h\theta^{-1}e^{-\rho t}V(z).
\end{aligned}
\]
This proves \eqref{eq:weighted-ergodicity}.
\end{proof}

\subsection{A coefficient criterion}\label{subsec:coefficient-criterion}
The following coefficient conditions verify the path hypotheses
of Theorem~\ref{thm:controlled-ergodicity}.
They also give a minorization time independent of the starting compact set.
Fix a finite nonempty set $\mathcal J\subset\Sset$.

\begin{assumpt}[Coefficient representation]\label{ass:coefficient-representation}
For every $i\in\mathcal J$, there are constant matrices
$A_i\in\R^{d\times d}$, $B_i\in\R^{d\times m}$,
a vector $c_i\in\R^d$, and $C^2$ maps
$f_i:\R^d\to\R^m$, $R_i:\R^d\to\R^{m\times m}$ such that
\begin{equation}\label{ext:eq:feedbackfields}
b_i^\circ(x)=A_ix+c_i+\sigma_i(x)f_i(x),\qquad
\sigma_i(x)R_i(x)=B_i,\qquad x\in\R^d.
\end{equation}
\end{assumpt}

\begin{remark*}[Coefficient representation]
Assumption~\ref{ass:coefficient-representation} allows affine controlled paths to be realized with the original
coefficients. For $x\in\R^d$, a prescribed switching schedule
$\lambda$ on $[0,T]$ taking values in $\mathcal J$, and
$v\in L^2([0,T];\R^m)$, set
\[
\begin{aligned}
\dot\phi_t&=A_{\lambda_t}\phi_t+c_{\lambda_t}+B_{\lambda_t}v_t,
\qquad \phi_0=x,\\
u_t&=R_{\lambda_t}(\phi_t)v_t-f_{\lambda_t}(\phi_t).
\end{aligned}
\]
Then $u\in L^2([0,T];\R^m)$ and
$\dot\phi_t=b_{\lambda_t}^\circ(\phi_t)+\sigma_{\lambda_t}(\phi_t)u_t$.
This reduction lets Proposition~\ref{prop:regular-word} adapt the
switched-linear endpoint construction of
\citet[Section~4.2, (25)--(33)]{SunGeLee2002}.
The identity $\sigma_iR_i=B_i$ means that the fixed directions in
$\Ran B_i$ have smooth control representations through $\sigma_i(x)$;
additional noise directions can vary with $x$.
\end{remark*}

Let $\mathcal G$ have vertex set $\Sset$ and an edge $i\to j$ when
$i\ne j$ and $q_{ij}(x)>0$ for every $x\in\R^d$; write
$\mathcal G[\mathcal J]$ for its induced graph on $\mathcal J$.
For the matrices in Assumption~\ref{ass:coefficient-representation} and
a nonempty subset $\mathcal R\subset\mathcal J$, define
\begin{equation}\label{eq:rank-iteration}
\mathcal V_0=\sum_{i\in\mathcal R}\Ran B_i,\qquad
\mathcal V_{k+1}=\mathcal V_k+\sum_{i\in\mathcal R}A_i\mathcal V_k.
\end{equation}
Here $\Ran B_i$ denotes the column space of $B_i$.

\begin{assumpt}[Accessibility and rank]\label{ass:smoothing}
\leavevmode
\begin{enumerate}[label=\textup{(\roman*)},leftmargin=*]
\item \emph{Accessibility.} The graph $\mathcal G[\mathcal J]$ is
strongly connected, and every $i\in\Sset$ has a finite directed path
in $\mathcal G$ to $\mathcal J$.
\item \emph{Rank across regimes.} There is a nonempty
$\mathcal R\subset\mathcal J$ for which
\eqref{eq:rank-iteration} satisfies $\mathcal V_d=\R^d$.
\end{enumerate}
\end{assumpt}

\begin{remark*}[Rank and accessibility]
The rank iteration is the switched-linear controllability criterion
of \citet[Theorem~1, (22), and Section~4.3, (35)]{SunGeLee2002}; see also
\citet[Lemma~1 and Remark~1]{LiuLinChen2013}.
Proposition~\ref{prop:regular-word} inserts admissible transitions
as in the graph-constrained setting of
\citet[Theorem~3.1]{Sun2007Constrained}.

Item~(i) is motivated by the positive-rate connecting paths in
\citet[Assumption~6.2]{XiZhu2017} and
\citet[Assumption~3.3(iii)]{Kunwai2022}, where each selected rate is
positive on a set of positive Lebesgue measure. Here positivity for
every $x$ along selected edges gives positive lower bounds on compact
path neighborhoods. Strong connectivity of $\mathcal G[\mathcal J]$
joins the controllability regimes, while access from every regime to
$\mathcal J$ supplies the entrance paths.
\end{remark*}

	For a reference regime $\ell_*$ and a bounded open ball
$B_*\subset\R^d$, define the target probability measure
\begin{equation}\label{eq:common-target-probability}
\nu(A)=\frac{1}{\Leb(B_*)}
\int_{B_*}\one_A(y,\ell_*)\,dy,
\qquad A\in\B(E).
\end{equation}

\begin{thm}[Common-time minorization]\label{thm:common-time-minorization}
		Suppose Assumptions~\ref{ass:local-construction},
        \ref{ass:coefficient-representation}, and~\ref{ass:smoothing}
		hold and the switching diffusion is nonexplosive.
		Fix $p_*\in\R^d$ and $\ell_*\in\mathcal J$. There are a ball $B_*$
		about $p_*$ and a time $h>0$ such that, for every compact
		$K\subset\R^d$ and finite nonempty $F\subset\Sset$, some
		$\varepsilon_{K,F}>0$ satisfies
		\begin{equation}\label{eq:coefficient-minorization}
			P_h(z,\cdot)\ge\varepsilon_{K,F}\nu(\cdot),
			\qquad z\in K\times F,
		\end{equation}
		where $\nu$ is the probability measure in
		\eqref{eq:common-target-probability}. 
	\end{thm}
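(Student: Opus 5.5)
The plan is to build one regular controlled path that starts and ends near $p_*$ in regime $\ell_*$, apply Theorem~\ref{prop:local} to it, and then reach a neighborhood of $p_*$ from an arbitrary compact set $K\times F$ in a fixed time. The last part must be done uniformly, so that $h$ and $B_*$ do not depend on $(K,F)$.

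\emph{Step 1: a regular loop.} Using Assumption~\ref{ass:coefficient-representation} and the rank condition of Assumption~\ref{ass:smoothing}(ii), I would invoke Proposition~\ref{prop:regular-word}. It should produce, for some $T_0>0$, a schedule $\lambda^*$ with values in $\mathcal J$, starting and ending at $\ell_*$, whose transitions are edges of $\mathcal G[\mathcal J]$ (strong connectivity is used to insert admissible transitions between the regimes of $\mathcal R$). It should also produce an affine control $v_*$ with lifted control $u_*=R_\lambda(\phi)v_*-f_\lambda(\phi)$, regular from $p_*$, with endpoint $y_*$. By the coefficient reduction, the endpoint map in the $v$-variable is affine, $\phi_T=\Phi x+\text{const}+Lv$. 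Surjectivity of $L$ is exactly the switched-linear rank criterion, and it implies surjectivity of $D_u\mathcal E$, since $v\mapsto u$ is onto along the path. Because the affine endpoint is freely adjustable, I may choose $v_*$ so that $y_*=p_*$. Admissibility is automatic, since edges of $\mathcal G$ have everywhere-positive rates. Theorem~\ref{prop:local} then gives a neighborhood $U$ of $p_*$, a ball $B_*$ about $p_*$, and $c_*>0$ with $P_{T_0}((x,\ell_*),\cdot)\ge c_*\Leb(B_*)\nu$ for $x\in U$. Shrinking $B_*$, I may assume $B_*\subset U$.

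\emph{Step 2: uniform entrance time.} Fix $T_1>0$. For any $(x,i)$, Assumption~\ref{ass:smoothing}(i) gives a finite path in $\mathcal G$ from $i$ to $\mathcal J$ and then within $\mathcal J$ to $\ell_*$. I would switch quickly along it during $[0,T_1/2]$ with zero control; the ODE solution exists on short intervals, and the rates are positive everywhere. During the remaining time, I would run the controllability word from $\mathcal J$, again via Proposition~\ref{prop:regular-word}, to steer to any prescribed point, ending in $\ell_*$. Because the affine system built from the rank word is exactly controllable on any interval, one common $T_1$ works for every starting point. Only the number of switches depends on $i$, and I would absorb these switches into an arbitrarily short prefix.

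The main obstacle is converting these controlled paths into a probability lower bound uniformly over $K\times F$ in the fixed time $T_1$; Lemma~\ref{lem:controlled-entrance} is stated for Lyapunov sublevel sets. I would rely on a support-type argument. First, the diffusion stays in a tube around $\phi$ with positive probability, which is continuous in the starting point. Second, the switching at each selected time occurs with probability bounded below, since the rates are bounded below on compact tubes and bounded above by Assumption~\ref{ass:local-construction}(ii) on compacts and finite regime sets. Third, nonexplosion ensures $P_t$ is a genuine kernel. Then I would use lower semicontinuity in $z$ and compactness of $K\times F$ to obtain $\alpha_{K,F}:=\inf_{z\in K\times F}P_{T_1}(z,U\times\{\ell_*\})>0$.

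\emph{Step 3: conclusion.} Put $h=T_1+T_0$, which is independent of $K,F$. By the Markov property,
\[
P_h(z,\cdot)\ge\int_{U\times\{\ell_*\}}P_{T_1}(z,dw)\,P_{T_0}(w,\cdot)\ge\alpha_{K,F}\,c_*\Leb(B_*)\,\nu(\cdot),\qquad z\in K\times F,
\]
which is \eqref{eq:coefficient-minorization} with $\varepsilon_{K,F}=\alpha_{K,F}c_*\Leb(B_*)$.
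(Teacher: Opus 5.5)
Your proposal is essentially the paper's proof. The paper also builds a regular loop from Proposition~\ref{prop:regular-word} with $y_*=p_*$, applies Theorem~\ref{prop:local}, arranges a controlled entrance at a common time, and uses the Markov property with $h=T_{\rm acc}+T_0$. Two points are simpler in the paper: Lemma~\ref{lem:controlled-entrance} is stated for an arbitrary nonempty compact $C\subset E$, not only for Lyapunov sublevel sets, so it applies directly to $K\times F$ and your support-type re-derivation is unnecessary; and instead of claiming controllability on every interval, the paper lets the outside-$\mathcal J$ prefix length $t_*$ depend on $(x,i)$, fills the rest of $[0,1]$ with $\mathcal J$-visits under the globally defined affine dynamics (control $-f$), and then runs $\lambda^*$ for the fixed time $T_0$, giving $T_{\rm acc}=1+T_0$.
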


	\begin{cor}[Ergodicity under the coefficient criterion]
\label{thm:coefficient-ergodicity}
Under Assumptions~\ref{ass:local-construction},
\ref{ass:joint-lyapunov}, \ref{ass:coefficient-representation}, and
\ref{ass:smoothing}, the joint process is nonexplosive and has a
unique invariant probability $\pi$, with $\pi(V)\le b/a$, and
satisfies \eqref{eq:weighted-ergodicity}.
\end{cor}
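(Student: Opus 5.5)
The statement to prove is Corollary~\ref{thm:coefficient-ergodicity}. The plan is to verify the hypotheses of Theorem~\ref{thm:controlled-ergodicity} and then invoke it, using Theorem~\ref{thm:common-time-minorization} wherever possible. The first part of the proof of Theorem~\ref{thm:controlled-ergodicity} derives nonexplosion and the semigroup drift bound \eqref{eq:joint-semigroup-drift} from Assumptions~\ref{ass:local-construction} and~\ref{ass:joint-lyapunov} alone. So the process is nonexplosive, and Theorem~\ref{thm:common-time-minorization} becomes available.

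Fix $p_*\in\R^d$, $\ell_*\in\mathcal J$, and $R>2b/a$ large enough that $C_R\ne\varnothing$; this is possible since $V\ge1$ is finite. Because $V$ is proper, $C_R$ is compact in $E=\R^d\times\Sset$ with the discrete topology on $\Sset$. Hence $C_R\subset K\times F$ for some compact $K\subset\R^d$ and finite nonempty $F\subset\Sset$.

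Theorem~\ref{thm:common-time-minorization} then yields $h>0$, a probability $\nu$, and $\varepsilon>0$, which we may shrink into $(0,1)$, such that
\[
P_h(z,\cdot)\ge\varepsilon\nu(\cdot),\qquad z\in C_R.
\]
Together with $P_hV\le\gamma V+K_h$, where $\gamma=e^{-ah}$ and $2K_h/(1-\gamma)=2b/a<R$, these are exactly the inputs used to apply the Harris theorem of \citet{HairerMattingly2011Harris} in the proof of Theorem~\ref{thm:controlled-ergodicity}. I would repeat that argument verbatim from that point onward. This gives a unique invariant $\pi$ for $P_h$ with $\pi(V)\le b/a$, invariance and uniqueness for the full semigroup, and the interpolation to continuous time giving \eqref{eq:weighted-ergodicity}.

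The main point to check is that Theorem~\ref{thm:common-time-minorization} can stand in for hypotheses (i)--(ii) of Theorem~\ref{thm:controlled-ergodicity}. Its minorization holds uniformly on $K\times F$ with a single time $h$ and a single target $\nu$, which is exactly what the Harris step requires. Only compactness of $C_R$ and finiteness of its regime projection are used, and both follow from properness of $V$.

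An alternative route would be to verify (i) and (ii) directly from Proposition~\ref{prop:regular-word} and the graph paths. That seems unnecessary, since Theorem~\ref{thm:common-time-minorization} already packages this construction together with Lemma~\ref{lem:controlled-entrance}.
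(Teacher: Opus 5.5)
Your proposal is correct and follows the paper's proof essentially step for step. You obtain nonexplosion and \eqref{eq:joint-semigroup-drift} from the stopped Lyapunov argument alone, apply Theorem~\ref{thm:common-time-minorization} on some $K\times F\supset C_R$ with $R>2b/a$, and rerun the Harris argument from the proof of Theorem~\ref{thm:controlled-ergodicity} for $P_h$; as in the paper, hypotheses (i)--(ii) of that theorem never need to be verified, despite what your opening sentence suggests.
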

\begin{proof}
The stopped Lyapunov argument in the proof of
Theorem~\ref{thm:controlled-ergodicity}, up to
\eqref{eq:joint-semigroup-drift}, uses only
Assumptions~\ref{ass:local-construction} and~\ref{ass:joint-lyapunov}.
It gives nonexplosion and that drift estimate. Thus
Theorem~\ref{thm:common-time-minorization} applies.
Choose $R>2b/a$ with $C_R\ne\varnothing$. Properness puts $C_R$
in some $K\times F$ with $K$ compact and $F$ finite, so its
$h$-step kernel satisfies minorization and
\[
P_hV\le\gamma V+K_h,\qquad
\gamma=e^{-ah},\quad K_h=\frac ba(1-\gamma),\quad
\frac{2K_h}{1-\gamma}=\frac{2b}{a}<R.
\]
The Harris argument in the proof of
Theorem~\ref{thm:controlled-ergodicity}, starting with this
$h$-step kernel, gives the stated conclusion.
\end{proof}

The following sufficient condition for Assumption~\ref{ass:joint-lyapunov}
is useful for the application in Section~\ref{sec:examples}.
 A related additive construction for
countably many regimes appears in
\citet[Theorem~4.1 and its proof]{Kunwai2022}.
Choose symmetric matrices $P_i\in\R^{d\times d}$ satisfying
\begin{equation}\label{eq:matrix-bounds}
\underline p I\preceq P_i\preceq\overline p I,
\qquad 0<\underline p\le\overline p<\infty,
\end{equation}
where $\preceq$ is the quadratic-form order, and put
\[
\begin{aligned}
QP(x,i)&=\sum_{j\ne i}q_{ij}(x)(P_j-P_i),\\
D_P(x,i)&=2x^\top P_ib_i(x)
+\operatorname{tr}(\sigma_i(x)^\top P_i\sigma_i(x))
+x^\top QP(x,i)x.
\end{aligned}
\]
The matrix sum converges absolutely by \eqref{eq:matrix-bounds}
and $q_i<\infty$.

\begin{proposition*}[Quadratic Lyapunov criterion]
Under Assumption~\ref{ass:local-construction}(ii), suppose
\eqref{eq:matrix-bounds} holds. Let $\theta>0$ and let
$\chi:\Sset\to[1,\infty)$ have finite sublevel sets, with
$\sum_{j\ne i}q_{ij}(x)|\chi(j)-\chi(i)|$ locally bounded.
If, for some $\eta,\gamma>0$ and $K_0\ge0$,
\begin{equation}\label{eq:joint-quadratic-certificate}
D_P(x,i)+\theta Q\chi(x,i)
\le-\eta|x|^2-\gamma\chi(i)+K_0\quad\text{on }E,
\end{equation}
then $V(x,i)=1+x^\top P_ix+\theta\chi(i)$ satisfies
Assumption~\ref{ass:joint-lyapunov}, with
$a=\min\{\eta/\overline p,\gamma/\theta\}$ and $b=K_0+a$.
\end{proposition*}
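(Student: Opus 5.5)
This is a direct Itô computation for $V(x,i)=1+x^\top P_ix+\theta\chi(i)$. Each $V(\cdot,i)$ is a quadratic polynomial, so it is $C^2$ with $\nabla_xV=2P_ix$ and $D^2_xV=2P_i$. Hence
\[
\mathcal L_iV(\cdot,i)(x)=2x^\top P_ib_i(x)+\operatorname{tr}(\sigma_i(x)^\top P_i\sigma_i(x)).
\]
The jump differences are $V(x,j)-V(x,i)=x^\top(P_j-P_i)x+\theta(\chi(j)-\chi(i))$. By \eqref{eq:matrix-bounds}, $|x^\top(P_j-P_i)x|\le(\overline p-\underline p)|x|^2$. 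Therefore
\[
J_V(x,i)\le(\overline p-\underline p)|x|^2q_i(x)+\theta\sum_{j\ne i}q_{ij}(x)|\chi(j)-\chi(i)|.
\]
We need this to be locally bounded on $E$, meaning bounded on sets $K\times F$ with $K$ compact and $F$ finite, since compact subsets of $E$ are of this form. The first term is bounded there by Assumption~\ref{ass:local-construction}(ii), and the second by the hypothesis on $\chi$. With absolute convergence secured, the jump sum splits, and
\[
\A V(x,i)=D_P(x,i)+\theta Q\chi(x,i).
\]

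Properness comes next. We have $V\ge1$, and if $V(x,i)\le R$ then $\underline p|x|^2\le R$ and $\theta\chi(i)\le R$. The latter confines $i$ to the finite sublevel set $\{\chi\le R/\theta\}$, so the sublevel set is closed (by continuity in $x$ and discreteness of $\Sset$) and contained in a compact ball times a finite set. It is therefore compact.

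For the drift inequality, \eqref{eq:joint-quadratic-certificate} gives $\A V\le-\eta|x|^2-\gamma\chi(i)+K_0$. Using $x^\top P_ix\le\overline p|x|^2$, we get $-\eta|x|^2\le-(\eta/\overline p)x^\top P_ix\le-a\,x^\top P_ix$. Also $-\gamma\chi(i)=-(\gamma/\theta)\theta\chi(i)\le-a\theta\chi(i)$. Thus
\[
\A V\le-a(V-1)+K_0=-aV+(K_0+a),
\]
which is \eqref{eq:joint-lyapunov-drift} with $b=K_0+a$.

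The only point requiring care is justifying the splitting of the generator sum and the local boundedness of $J_V$. This is handled by the absolute-convergence estimate above, which combines the matrix bounds, the local rate bound, and the $\chi$ hypothesis. Everything else is routine.
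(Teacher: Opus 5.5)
Your proof is correct and follows the paper's argument. You identify $\A V=D_P+\theta Q\chi$, bound $J_V$ by a quadratic rate term plus the $\chi$ jump sum, turn the certificate into $-aV+b$ via $x^\top P_ix\le\overline p|x|^2$, and get properness from $V\ge1+\underline p|x|^2+\theta\chi(i)$. Your bound $(\overline p-\underline p)|x|^2q_i(x)$ is slightly sharper than the paper's $2\overline p|x|^2q_i(x)$, but this changes nothing in the conclusion.
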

\begin{proof}
The generator and absolute jump sum satisfy
\[
\begin{aligned}
\A V&=D_P+\theta Q\chi
\le-\eta|x|^2-\gamma\chi(i)+K_0\le-aV+b,\\
J_V(x,i)&\le2\overline p|x|^2q_i(x)
+\theta\sum_{j\ne i}q_{ij}(x)|\chi(j)-\chi(i)|.
\end{aligned}
\]
The second bound is locally bounded by the hypotheses. Finally,
$V\ge1+\underline p|x|^2+\theta\chi(i)$ and continuity of
$V(\cdot,i)$ give compact sublevel sets.
\end{proof}

\section{Minorization from prescribed switching schedules}\label{sec:rank-cycle}

\subsection{Prescribed switching schedules and controlled endpoints}
\label{subsec:controlled-endpoints}
Use the prescribed switching schedules defined in
Section~\ref{subsec:controlled-paths} and the controlled equation
\eqref{eq:skeleton}. Let $\mathcal R\subset\mathcal J$ be as in
Assumption~\ref{ass:smoothing}(ii), and write $T_\lambda$ for the
duration of a prescribed switching schedule $\lambda$.
For prescribed switching schedules $\lambda,\eta$ taking values in
$\mathcal J$, define their concatenation by
\[
(\lambda\star\eta)_t=
\begin{cases}
\lambda_t,&0\le t<T_\lambda,\\
\eta_{t-T_\lambda},&T_\lambda\le t\le T_\lambda+T_\eta,
\end{cases}
\qquad T_{\lambda\star\eta}=T_\lambda+T_\eta.
\]
Equal regimes at the join form one visit. For $i\in\mathcal J$ and
$t>0$, write $\lambda^{i,t}$ for the constant prescribed switching
schedule with $\lambda^{i,t}_s=i$ for $0\le s\le t$.

For the original coefficients, Assumption~\ref{ass:coefficient-representation}
gives the reduction
\[
u_t=R_{\lambda_t}(\phi_t)v_t-f_{\lambda_t}(\phi_t)
\quad\Longrightarrow\quad
\dot\phi_t=A_{\lambda_t}\phi_t+c_{\lambda_t}+B_{\lambda_t}v_t.
\]
Let $\Phi_\lambda(t,s)$ solve
$\partial_t\Phi_\lambda(t,s)=A_{\lambda_t}\Phi_\lambda(t,s)$,
$\Phi_\lambda(s,s)=I$. Variation of constants gives
\begin{equation}\label{eq:affine-data}
\begin{aligned}
m_\lambda(x)&=\Phi_\lambda(T_\lambda,0)x
 +\int_0^{T_\lambda}\Phi_\lambda(T_\lambda,s)c_{\lambda_s}\,ds,\\
K_\lambda v&=\int_0^{T_\lambda}\Phi_\lambda(T_\lambda,s)B_{\lambda_s}v_s\,ds,
\qquad \phi_{T_\lambda}=m_\lambda(x)+K_\lambda v.
\end{aligned}
\end{equation}

\begin{prop}[Controllability with one prescribed switching schedule]
\label{prop:regular-word}
Suppose Assumption~\ref{ass:coefficient-representation} holds and
$b_i^\circ,\sigma_i$ are $C^2$ for every $i\in\mathcal J$.
Assume the rank condition in Assumption~\ref{ass:smoothing}(ii) and
strong connectivity of $\mathcal G[\mathcal J]$ as in item~(i).
Fix $\ell_*\in\mathcal J$.
There exist $T_0>0$ and a prescribed switching schedule $\lambda^*$
of duration $T_{\lambda^*}=T_0$, taking values in $\mathcal J$, with
$\lambda^*_0=\lambda^*_{T_0}=\ell_*$ and successive transitions
along edges of $\mathcal G[\mathcal J]$, such that
$K_{\lambda^*}:L^2([0,T_0];\R^m)\to\R^d$ is onto.
For every $x,y\in\R^d$, there is a deterministic control $u\in L^2([0,T_0];\R^m)$ satisfying
\[
\mathcal E_{x,T_0,\lambda^*}(u)=y,\qquad
D_u\mathcal E_{x,T_0,\lambda^*}(u):L^2([0,T_0];\R^m)\to\R^d
\quad\hbox{is onto}.
\]
\end{prop}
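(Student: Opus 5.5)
The plan has two parts. First I build a schedule $\lambda^*$ for which the linear operator $K_{\lambda^*}$ is onto. Then I lift an affine steering control to the original equation via Assumption~\ref{ass:coefficient-representation} and check that surjectivity survives the lift.

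\textbf{Schedule.} The image of $K_\lambda$ is the reachable subspace of the time-varying linear system $\dot\phi=A_{\lambda_t}\phi+B_{\lambda_t}v$ from $0$. Write $\mathcal V_\infty=\mathcal V_d$ for the smallest subspace containing every $\Ran B_i$, $i\in\mathcal R$, and invariant under every $A_i$, $i\in\mathcal R$. Following \citet[Section~4.2]{SunGeLee2002}, I would build the schedule greedily.
\begin{enumerate}[label=\textup{(\alph*)},leftmargin=*]
\item Fix a cyclic walk in $\mathcal G[\mathcal J]$ that starts and ends at $\ell_*$ and visits every $i\in\mathcal R$. Strong connectivity gives such a walk.
\item Repeat this walk $d+1$ times, and give every visit a positive duration.
\item On a visit to $i$ of length $t$, the reachable space $\mathcal W$ becomes $e^{A_it}\mathcal W+\operatorname{span}\{A_i^kB_i\}_k$. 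This uses the Kalman subspace of the frozen pair $(A_i,B_i)$ and the fact that $\Phi(T,s)$ is invertible.
\item Visits to regimes in $\mathcal J\setminus\mathcal R$ only apply invertible maps $e^{A_jt}$, and these cannot decrease dimension.
\end{enumerate}
The standard argument then applies. Since $e^{A_it}\mathcal W=\sum_k (t^k/k!)A_i^k\mathcal W$, for generic durations (all but finitely many values, by analyticity in $t$) the space $e^{A_it}\mathcal W+\mathcal W$ equals $\mathcal W+A_i\mathcal W+A_i^2\mathcal W+\cdots$. A short extra visit to the same regime, or a suitable choice of durations, therefore strictly enlarges the reachable space unless it is already invariant under all $A_i$ and contains all $\Ran B_i$, $i\in\mathcal R$. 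After at most $d$ rounds the dimension reaches $\dim\mathcal V_d=d$. The intermediate non-$\mathcal R$ regimes are handled as in \citet[Theorem~3.1]{Sun2007Constrained}. Their invertible transport maps are absorbed by making those visits arbitrarily short, so the maps are close to $I$, and dimension is lower semicontinuous in the durations. This rank-growth argument along a graph-constrained schedule is the main obstacle. The cleanest route I see is to show that $\dim\Ran K_\lambda$ is an analytic rank in the duration vector, then pick durations where it is maximal and compare with the limit in which non-$\mathcal R$ durations tend to $0$.

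\textbf{Lift.} Given $x,y$, surjectivity of $K_{\lambda^*}$ gives $v$ with $m_{\lambda^*}(x)+K_{\lambda^*}v=y$; by \eqref{eq:affine-data}, the minimal-norm solution lies in $L^2$. Define $\phi$ by the affine equation and set $u=R_{\lambda}(\phi)v-f_\lambda(\phi)$, as in the Remark. This $u$ is in $L^2$ because $\phi$ is continuous and $R_i,f_i$ are continuous. It satisfies $\mathcal E_{x,T_0,\lambda^*}(u)=y$.

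\textbf{Surjectivity of $D_u\mathcal E$.} The $C^2$ hypothesis makes the endpoint map Fréchet differentiable. Its differential is $w\mapsto\int_0^{T_0}\Psi(T_0,s)\sigma_{\lambda_s}(\phi_s)w_s\,ds$, where $\Psi$ is the resolvent of the linearization $D b^\circ_{\lambda_t}(\phi_t)+\sum_k Dc\,u$-terms along $\phi$. I would not compute $\Psi$. Instead I use the feedback map $w\mapsto\delta\phi$ through the chain rule. For any $\delta v\in L^2$, let $\delta\phi$ solve $\dot{\delta\phi}=A\delta\phi+B\delta v$ with $\delta\phi_0=0$, and set
\[
w=R(\phi)\delta v+\bigl[D(R v)-Df\bigr](\phi)\,\delta\phi .
\]
Differentiating the identity $b^\circ_i(\phi)+\sigma_i(\phi)(R_i(\phi)v-f_i(\phi))=A_i\phi+c_i+B_iv$, which holds for all $(\phi,v)$ by \eqref{ext:eq:feedbackfields}, shows that $\delta\phi$ solves the linearized original equation driven by $w$. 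Since $w\in L^2$, we get $D_u\mathcal E(u)w=\delta\phi_{T_0}=K_{\lambda^*}\delta v$. Hence $\Ran D_u\mathcal E\supseteq\Ran K_{\lambda^*}=\R^d$.
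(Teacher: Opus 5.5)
\textbf{There is a genuine gap in the schedule construction}, the step you yourself flag as the main obstacle.

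The identities you rely on are false as statements about subspaces:
\begin{itemize}
\item $e^{A_it}\mathcal W$ has dimension $\dim\mathcal W$, whereas $\sum_k(t^k/k!)A_i^k\mathcal W=\sum_kA_i^k\mathcal W$ is the $A_i$-invariant hull of $\mathcal W$.
\item $e^{A_it}\mathcal W+\mathcal W$ has dimension at most $2\dim\mathcal W$. So it cannot equal $\sum_kA_i^k\mathcal W$ when, say, $\mathcal W=\operatorname{span}\{b\}$ with $(A_i,b)$ controllable in $\R^3$.
\end{itemize}

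More importantly, your own update rule does not retain $\mathcal W$. Put $\mathcal C_i=\sum_{k\ge0}A_i^k\Ran B_i$. Appending a visit gives $e^{A_it}\mathcal W+\mathcal C_i=e^{A_it}(\mathcal W+\mathcal C_i)$, because $\mathcal C_i$ is $e^{A_it}$-invariant. Its dimension is therefore $\dim(\mathcal W+\mathcal C_i)$ for every $t>0$. Once $\mathcal W$ contains every $\mathcal C_i$, no appended visit, short or long, raises the dimension, even if $\mathcal W$ is not invariant.

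A concrete case: take $d=2$, $A_1=0$, $B_1=(1,0)^\top$, $B_2=0$, $A_2(1,0)^\top=(0,1)^\top$ and $A_2(0,1)^\top=0$. The rank condition holds. One visit to regime $1$ gives $\mathcal W=\operatorname{span}\{(1,0)^\top\}$. This contains $\mathcal C_1$ and $\mathcal C_2=\{0\}$ but is not $A_2$-invariant. A following visit to regime $2$ of any length gives $\operatorname{span}\{(1,t)^\top\}$, still one-dimensional.

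So growth comes only from the interplay of several durations, and your sketch does not control it. Your fallback, choosing durations where the analytic rank is maximal, does not explain why that maximum equals $d$. Surjectivity of $K_{\lambda^*}$, the first assertion of the proposition, is therefore not established.

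\textbf{The missing idea is the paper's Step~1.}
\begin{itemize}
\item The span $\mathcal W$ of all $L_\lambda$, $\lambda\in\Omega_{\mathcal R}$, contains each $\Ran B_i$.
\item Since $L_{\lambda\star\lambda^{i,t}}=e^{tA_i}L_\lambda+L_{\lambda^{i,t}}$, the span is $e^{tA_i}$-invariant, hence $A_i$-invariant, hence equal to $\mathcal V_d=\R^d$.
\item A schedule $\lambda$ of maximal $\dim L_\lambda$ absorbs every prepended schedule: $\Phi_\lambda(T_\lambda,0)L_\eta\subset L_{\eta\star\lambda}=L_\lambda$. Therefore $L_\lambda\supseteq\Phi_\lambda(T_\lambda,0)\mathcal W=\R^d$.
\end{itemize}

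Your round count can also be rescued along your own lines. Suppose the generic ranks after $k$ and $k+1$ rounds agree. Then on a dense open set of durations the two parsings give
$\Phi_{\mathrm{round}}(s^{(k+1)})L_k(s^{(1)},\dots,s^{(k)})=L_{k+1}=L_k(s^{(2)},\dots,s^{(k+1)})$.
This forces the generic $k$-round space to be one fixed subspace. It is invariant under every round propagator and contains every $\Ran B_j$. Sending all but one duration to zero makes it $A_j$-invariant, so it equals $\R^d$; hence $d$ rounds suffice.

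With either fix in place, your treatment of the non-$\mathcal R$ regimes (short insertions plus lower semicontinuity of rank) matches the paper's Step~2.

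\textbf{The lift and surjectivity part is correct}, and slightly more direct than the paper's. The paper builds a $C^1$ family $\xi\mapsto u^\xi$ in $L^2$ and applies the chain rule. You instead differentiate the identity $b_i^\circ(x)+\sigma_i(x)(R_i(x)v-f_i(x))=A_ix+c_i+B_iv$. You then check that $\delta\phi$ solves the linearized equation of Lemma~\ref{lem:endpoint-directions} with an $L^2$ input $w$. This gives $\Ran D_u\mathcal E\supseteq\Ran K_{\lambda^*}$ without Taylor estimates.
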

\begin{proof}
\emph{1. Use the rank condition to construct a prescribed switching schedule without graph constraints.}
The linear controllability conclusion for one prescribed switching
schedule is classical; see
\citet[Theorem~1 and (22)]{SunGeLee2002} and
\citet[Lemma~1 and Remark~1]{LiuLinChen2013}.
We prove it by concatenation. Let $\Omega_{\mathcal R}$ be the set of
all prescribed switching schedules taking values in $\mathcal R$,
of arbitrary positive duration and without graph constraints, and put
\[
L_\lambda=\Ran K_\lambda,\qquad
\mathcal W=\operatorname{span}
 \left(\bigcup_{\lambda\in\Omega_{\mathcal R}}L_\lambda\right).
\]
The stabilized space $\mathcal V_d$ in
\eqref{eq:rank-iteration} is the smallest common $A_i$-invariant
subspace containing $\Ran B_i$ for $i\in\mathcal R$.
Consequently, \eqref{eq:affine-data} gives
$\mathcal W\subset\mathcal V_d$. For $i\in\mathcal R$, $t>0$, and
$z\in\R^d$,
\[
z\perp L_{\lambda^{i,t}}
\ \Longrightarrow\
\int_0^t|B_i^\top e^{A_i^\top(t-s)}z|^2\,ds=0
\ \Longrightarrow\ B_i^\top z=0.
\]
The continuous nonnegative integrand vanishes at $s=t$, so taking
orthogonal complements yields
$\Ran B_i\subset L_{\lambda^{i,t}}\subset\mathcal W$.
Appending a visit gives
\[
L_{\lambda\star\lambda^{i,t}}=e^{tA_i}L_\lambda+L_{\lambda^{i,t}}
\subset\mathcal W.
\]
Hence $e^{tA_i}\mathcal W\subset\mathcal W$, and closedness gives
$A_iw=\lim_{t\downarrow0}(e^{tA_i}w-w)/t\in\mathcal W$ for
$w\in\mathcal W$. Thus $\mathcal W=\mathcal V_d=\R^d$, where the
last equality uses Assumption~\ref{ass:smoothing}(ii).

Choose $\lambda\in\Omega_{\mathcal R}$ maximizing $\dim L_\lambda$.
For every $\eta\in\Omega_{\mathcal R}$, concatenation gives
\[
L_{\eta\star\lambda}
 =\Phi_\lambda(T_\lambda,0)L_\eta+L_\lambda=L_\lambda,
\]
where the second equality follows from maximality. Since
$\Phi_\lambda(T_\lambda,0)$ is invertible,
\[
\R^d=\Phi_\lambda(T_\lambda,0)\mathcal W
 =\operatorname{span}_{\eta\in\Omega_{\mathcal R}}
   \Phi_\lambda(T_\lambda,0)L_\eta
 \subset L_\lambda\subset\R^d.
\]
Applied to this affine endpoint map,
Lemma~\ref{lem:endpoint-directions} supplies orthonormal step
functions $v_1,\ldots,v_d$ such that the matrix
$H=[K_\lambda v_1\ \cdots\ K_\lambda v_d]\in\R^{d\times d}$ is invertible.
Its $j$th column is $K_\lambda v_j$.

\emph{2. Use connectivity to insert admissible transitions.}
For graph-constrained reachability, see
\citet[Theorem~3.1]{Sun2007Constrained}.
Write the selected prescribed switching schedule as
$\lambda=\lambda^{\boldsymbol\iota,\tau}$.
Assumption~\ref{ass:smoothing}(i) supplies directed paths joining
$\ell_*,\iota_0,\ldots,\iota_n,\ell_*$. Retain each original visit and give
each inserted interval length $\epsilon>0$, obtaining a prescribed switching schedule
$\lambda^\epsilon$ along graph edges, with initial and terminal regime
$\ell_*$ and duration $T_\epsilon=T_\lambda+N\epsilon$, where $N$
counts the inserted intervals. Keep the original and inserted intervals
marked, including when adjacent equal regimes combine into one visit.

Let $N_r$ count inserted intervals before original visit $r$,
$0\le r\le n$. On $[\tau_r,\tau_{r+1})$ put
$\theta_\epsilon(s)=s+N_r\epsilon$ and translate the directions by
\[
\begin{aligned}
v_j^\epsilon(\theta_\epsilon(s))&=v_j(s),\qquad
v_j^\epsilon=0\quad\hbox{on inserted intervals},\\
\int_0^{T_\epsilon}v_j^\epsilon(t)\cdot v_l^\epsilon(t)\,dt
 &=\sum_{r=0}^n\int_{\tau_r}^{\tau_{r+1}}v_j(s)\cdot v_l(s)\,ds
 =\delta_{jl}.
\end{aligned}
\]
In the finite product for the propagator, insertion adds factors
$e^{\epsilon A_i}$ with
$\|e^{\epsilon A_i}-I\|\le\epsilon\|A_i\|e^{\epsilon\|A_i\|}$,
where $\|\cdot\|$ is the Euclidean operator norm.
Thus \eqref{eq:affine-data} gives
\[
H_\epsilon=[K_{\lambda^\epsilon}v_1^\epsilon\ \cdots\
K_{\lambda^\epsilon}v_d^\epsilon],\qquad
\|H_\epsilon-H\|\le C\epsilon,
\quad 0<\epsilon\le1.
\]
Choose $\epsilon$ with
$\|H^{-1}\|\|H_\epsilon-H\|<1/2$; the Neumann series then gives
$\|H_\epsilon^{-1}\|\le2\|H^{-1}\|$.
Fix $\epsilon$ as above and put $\lambda^*=\lambda^\epsilon$ and
$T_0=T_{\lambda^*}=T_\epsilon$. Relabel $v_j^\epsilon,H_\epsilon$
as $v_j,H$.

\emph{3. Lift the affine controls to the original equation.}
Following the linear endpoint construction in
\citet[Section~4.2, (25)--(33)]{SunGeLee2002}, set
\[
v^0=\sum_{j=1}^d[H^{-1}(y-m_{\lambda^*}(x))]_jv_j,\qquad
v^\xi=v^0+\sum_{j=1}^d\xi_jv_j.
\]
The affine path $\phi^\xi$ has endpoint $y+H\xi$ and depends affinely
on $\xi$. For $\xi$ in a small closed ball, these paths lie in a
common compact set. Assumption~\ref{ass:coefficient-representation} gives the
$C^2$ maps $f_i,R_i$ and the path-preserving controls
\[
u^\xi(t)=R_{\lambda^*_t}(\phi_t^\xi)v^\xi(t)
-f_{\lambda^*_t}(\phi_t^\xi),\qquad
\mathcal E_{x,T_0,\lambda^*}(u^\xi)=y+H\xi.
\]
Since $v^0,v_j$ are step functions, the controls $u^\xi$ and their
first two parameter derivatives are uniformly bounded on this compact
family. Taylor's formula therefore gives, for $\xi,\xi+\eta$ in the ball,
\[
\begin{aligned}
\|u^{\xi+\eta}-u^\xi-D_\xi u^\xi\eta\|_{L^2}
 &\le C|\eta|^2,\\
\|D_\xi u^{\xi+\eta}-D_\xi u^\xi\|_{\R^d\to L^2}
 &\le C|\eta|.
\end{aligned}
\]
Thus $\xi\mapsto u^\xi$ is $C^1$ into $L^2$.
The assumed $C^2$ regularity of $b_i^\circ,\sigma_i$ on the
regimes visited by $\lambda^*$ allows us to apply
Lemma~\ref{lem:endpoint-directions}; the chain rule gives
\[
D_u\mathcal E_{x,T_0,\lambda^*}(u^0)
\left.D_\xi u^\xi\right|_{\xi=0}=H.
\]
Since $H$ is invertible, the endpoint derivative is onto.
\end{proof}

\subsection{From controlled endpoints to local minorization}
\label{subsec:local-minorization}
We prove Theorem~\ref{prop:local}.  Fix $T>0$ and a regime sequence
$\boldsymbol\iota$ as in Section~\ref{subsec:controlled-paths}.

Fix a compact set \(K^*\subset\R^d\) and globally Lipschitz
extensions of \(b_i,\sigma_i\) agreeing with the original
coefficients near \(K^*\). Let \(Y^{x,\boldsymbol\iota,\tau}\) solve
\begin{equation}\label{eq:prescribed-sde}
\begin{aligned}
dY_t^{x,\boldsymbol\iota,\tau}
&=b_{\lambda_t^{\boldsymbol\iota,\tau}}
  (Y_t^{x,\boldsymbol\iota,\tau})\,dt
 +\sigma_{\lambda_t^{\boldsymbol\iota,\tau}}
  (Y_t^{x,\boldsymbol\iota,\tau})\,dW_t,\\
Y_0^{x,\boldsymbol\iota,\tau}&=x.
\end{aligned}
\end{equation}
The law restricted to \(C([0,T];K^*)\) is independent of these extensions.

For a continuous path \(y\), set
\begin{equation}\label{eq:path-weight}
L_{T,\tau}(y)=
\exp\left\{-\int_0^Tq_{\lambda_s^{\boldsymbol\iota,\tau}}(y_s)\,ds\right\}
\prod_{r=1}^nq_{\iota_{r-1}\iota_r}(y_{\tau_r}).
\end{equation}

The next lemma records a localized consequence of the standard
interlacing construction.

	\begin{lem}[Localized switching-path lower bound]
		\label{lem:path-expansion}
		Under Assumption~\ref{ass:local-construction}, let
		$\mathcal D\subset\Delta_n(T)$ be a Borel set of switching times and
		$\mathcal T_\tau\subset C([0,T];K^*)$ a Borel path restriction
		measurable jointly in $\tau$ and the path. Suppose that for every
		$\tau\in\mathcal D$ and $y\in\mathcal T_\tau$,
		\[
		\begin{aligned}
		q_{\lambda_s^{\boldsymbol\iota,\tau}}(y_s)&\le Q_*<\infty,
		&&0\le s\le T,\\
		q_{\iota_{r-1}\iota_r}(y_{\tau_r})&\ge q_*>0,
		&&1\le r\le n.
		\end{aligned}
		\]
		Then, for $D\in\B(\R^d)$,
		\begin{equation}\label{eq:word-lower-bound}
			\begin{aligned}
				P_T((x,\iota_0),D\times\{\iota_n\})
				&\ge\int_{\mathcal D}\E[
				\one_{\{Y_T^{x,\boldsymbol\iota,\tau}\in D,\,
					Y^{x,\boldsymbol\iota,\tau}\in\mathcal T_\tau\}}
				L_{T,\tau}(Y^{x,\boldsymbol\iota,\tau})]\,d\tau\\
				&\ge e^{-Q_*T}q_*^n\int_{\mathcal D}
				\PP\{Y_T^{x,\boldsymbol\iota,\tau}\in D,\,
				Y^{x,\boldsymbol\iota,\tau}\in\mathcal T_\tau\}\,d\tau.
			\end{aligned}
		\end{equation}
	\end{lem}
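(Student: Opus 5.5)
We condition on the Poisson random measure and identify the event on which the regime path follows $\boldsymbol\iota$ with exactly $n$ switches in $[0,T]$. We then express the probability of this event through the Brownian-driven process $Y^{x,\boldsymbol\iota,\tau}$.

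\emph{Step 1: reduction to an auxiliary construction.} Realize the switching through the interlacing construction, on a product space carrying $W$ and an independent $\mathsf N$. Work on the event that $X$ stays in $K^*$ on $[0,T]$; there the modified and original coefficients agree. The map $\tau\mapsto Y^{x,\boldsymbol\iota,\tau}$ is jointly measurable. On the event $\{\Lambda$ jumps exactly at times $\tau_1<\dots<\tau_n$ with successive values $\iota_1,\ldots,\iota_n\}$, pathwise uniqueness (localized to $K^*$) gives $X_t=Y_t^{x,\boldsymbol\iota,\tau}$ on $[0,T]$ with $\tau$ the realized jump vector. The reason is that both solve the same SDE driven by the same $W$ with the same piecewise-constant regime.

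\emph{Step 2: likelihood of the jump skeleton.} We use that $\mathsf N$ is independent of $W$ and that $h(y_s,\iota_r,\cdot)$ charges the interval $\Gamma_{\iota_r\iota_{r+1}}(y_s)$ of length $q_{\iota_r\iota_{r+1}}(y_s)$. Conditionally on $W$, fix a continuous $K^*$-valued candidate path $y=Y^{x,\boldsymbol\iota,\tau}$. The following holds by the standard Poisson computation: conditioned on the path being $Y^{\cdot,\tau}$ between jumps, the law of $\mathsf N$ restricted to the strip $\{(s,u):u<q_{\lambda_s}(y_s)\}$ gives density
\[
\exp\Bigl\{-\int_0^Tq_{\lambda_s^{\boldsymbol\iota,\tau}}(y_s)\,ds\Bigr\}\prod_{r=1}^n q_{\iota_{r-1}\iota_r}(y_{\tau_r})
\]
with respect to $d\tau$ for the event "the first $n$ atoms in the moving strip occur at $\tau_1,\ldots,\tau_n$, each landing in the interval selecting $\iota_r$, and no further atom before $T$". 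To make this rigorous without circularity, I will do the following. I would induct on $r$, using the strong Markov property at each switching time and the fact that, given the continuous path up to the next jump, the first atom time of $\mathsf N$ in $\{u<q_{\iota_{r}}(Y_s)\}$ has conditional hazard $q_{\iota_r}(Y_s)$. Its mark is then uniform on $[0,q_{\iota_r}(Y_s))$, so it selects $\iota_{r+1}$ with probability $q_{\iota_r\iota_{r+1}}/q_{\iota_r}$. Multiplying the hazards and integrating over $\tau\in\Delta_n(T)$ gives exactly $L_{T,\tau}$. Restricting the integral to $\tau\in\mathcal D$ and inserting the indicators $\one_{\{Y_T\in D,\,Y\in\mathcal T_\tau\}}$ only decreases the right side. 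On this event $Z_T=(Y_T,\iota_n)\in D\times\{\iota_n\}$ and $T<\zeta$, because the path stays in the compact $K^*$. This yields the first inequality. Fubini applies because all integrands are nonnegative and jointly measurable.

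\emph{Step 3: the second inequality.} This is immediate. On $\mathcal T_\tau$ the exponent is at least $-Q_*T$ and each factor of the product is at least $q_*$, so $L_{T,\tau}\ge e^{-Q_*T}q_*^n$ pointwise on the integrand's support.

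The main obstacle is Step 2. The hazard representation must be justified with state-dependent rates and a path-valued Brownian functional, keeping measurability in $\tau$. I would handle it by the inductive strong-Markov argument above: the conditional law of the next jump time given $\mathcal F^W$ and the past is explicit, because the compensator $dt\,du$ is deterministic and $W\perp\mathsf N$. The bound $Q_*$ keeps all intensities finite on $\mathcal T_\tau$.
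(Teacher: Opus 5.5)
Your proposal is correct and follows essentially the same route as the paper. The paper also takes the exponential-clock (Poisson-strip) first-switch density $e^{-\int_0^t q_i(Y_s)\,ds}q_{ij}(Y_t)\,dt$ along a frozen diffusion before spatial exit and iterates it through the switching times with survival after the last switch. Tonelli then gives an exact identity on the event localized near $K^*$. The choice $F(y,\tau)=\one_{\mathcal D}(\tau)\one_D(y_T)\one_{\mathcal T_\tau}(y)$ yields the first inequality, and the pointwise bound $L_{T,\tau}\ge e^{-Q_*T}q_*^n$ yields the second. As you anticipate, the pathwise identification of $X$ with $Y^{x,\boldsymbol\iota,\tau}$ at the random jump vector in your Step~1 would need a version of $Y^{x,\boldsymbol\iota,\tau}$ valid simultaneously in $\tau$; your inductive strong-Markov computation avoids this and gives the needed identity in law, as in the paper.
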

	\begin{proof}
Let $\mathcal J_{\boldsymbol\iota}=\{\iota_0,\ldots,\iota_n\}$.
Choose a bounded open neighborhood $O$ of $K^*$ whose compact closure
lies where the visited coefficient extensions agree with the originals.
Let $T_r$ be the regime jump times, with $T_0=0$ and absent jumps set to
$\infty$. Define
\[
\begin{aligned}
\eta&=\inf\{t\ge0:X_t\notin O\ \text{or}\
\Lambda_t\notin\mathcal J_{\boldsymbol\iota}\},\\
\mathcal H_{\boldsymbol\iota,T}
&=\{T<\eta,\ T_n<T<T_{n+1},\
\Lambda_{T_r}=\iota_r\ (1\le r\le n)\}.
\end{aligned}
\]
Assumption~\ref{ass:local-construction} gives unique frozen diffusions
and bounded total rates on $\overline O\times\mathcal J_{\boldsymbol\iota}$,
so this stopped construction has finitely many jumps before $T\wedge\eta$.
The exponential-clock construction of
\citet[proof of Theorem~2.1, Step~1, (2.2)--(2.5)]{XiZhu2017}
gives the first-switch density
$e^{-\int_0^tq_i(Y_s)\,ds}q_{ij}(Y_t)\,dt$ along a frozen
$i$-diffusion $Y$, before spatial exit. Iterating it and including
survival after the last switch and applying Tonelli's theorem gives,
for bounded nonnegative cylinder
functionals $F(y,\tau)$,
\[
\begin{aligned}
&\E_{(x,\iota_0)}[F(X,(T_1,\ldots,T_n));
\mathcal H_{\boldsymbol\iota,T}]\\
&\quad=\int_{\Delta_n(T)}\E\Bigl[
F(Y^{x,\boldsymbol\iota,\tau},\tau)
L_{T,\tau}(Y^{x,\boldsymbol\iota,\tau})\,
\one_{\{Y^{x,\boldsymbol\iota,\tau}\in C([0,T];O)\}}\Bigr]\,d\tau.
\end{aligned}
\]
The jointly Borel prescribed solution kernels and bounded rates make
the two sides finite measures on
$C([0,T];\R^d)\times\Delta_n(T)$. The monotone-class theorem extends
the identity from cylinder sets to Borel sets; monotone convergence
then gives it for every nonnegative Borel $F$.
Taking $F(y,\tau)=\one_{\mathcal D}(\tau)\one_D(y_T)
\one_{\mathcal T_\tau}(y)$ removes the spatial exit indicator and
gives the first inequality in \eqref{eq:word-lower-bound}.
For $\tau\in\mathcal D$ and $y\in\mathcal T_\tau$, the rate bounds
in the statement give
\[
\int_0^Tq_{\lambda_s^{\boldsymbol\iota,\tau}}(y_s)\,ds\le Q_*T,
\qquad
\prod_{r=1}^nq_{\iota_{r-1}\iota_r}(y_{\tau_r})\ge q_*^n.
\]
Hence $L_{T,\tau}(y)\ge e^{-Q_*T}q_*^n$, which gives the second inequality.
\end{proof}

We now use an admissible regular control to bound the joint
probability in \eqref{eq:word-lower-bound}. For this argument, fix
$T$ and a finite regime sequence $\boldsymbol\iota$. In the shifted family below, use
compactly supported localizations of the visited It\^o coefficients,
formed with smooth cutoffs, and their corresponding Stratonovich drifts.
The localized coefficients satisfy $b_i\in C_c^{d+3}$ and
$\sigma_i\in C_c^{d+4}$ by Assumption~\ref{ass:local-construction}(i).

Write $\alpha=(x,\tau)\in\R^d\times\Delta_n(T)$ for the initial-point
and switching-time parameter. The prescribed switching schedule
$\lambda^{\boldsymbol\iota,\tau}:[0,T]\to\Sset$, defined in
Section~\ref{sec:model}, depends only on $\tau$.

Use canonical Wiener space $C_0([0,T];\R^m)$, the continuous paths
$w$ with $w(0)=0$, with its uniform norm and Borel $\sigma$-field.
Let $\PP$ be Wiener measure and $W_t(w)=w(t)$ the coordinate process.
The Cameron--Martin space and its inner product are
\[
\begin{aligned}
\mathcal H
&=\{h\in C_0([0,T];\R^m):h\text{ is absolutely continuous},
\ \dot h\in L^2([0,T];\R^m)\},\\
\langle h,g\rangle_{\mathcal H}
&=\int_0^T\dot h(t)\cdot\dot g(t)\,dt.
\end{aligned}
\]
The dot denotes the Euclidean inner product. For $e_1,\ldots,e_d\in
\mathcal H$ with step-function derivatives, set
\[
a(t)=[\dot e_1(t)\ \cdots\ \dot e_d(t)]\in\R^{m\times d},\qquad
h_\xi(t)=\sum_{j=1}^d\xi_je_j(t)=\int_0^t a(r)\xi\,dr.
\]
For $u\in L^2$, the integrated control $h(t)=\int_0^tu_r\,dr$
also belongs to $\mathcal H$. Fix a closed ball $Q\subset\R^d$ of
positive radius and a bounded open neighborhood $U_Q$ of $Q\cup\{0\}$.
For $\xi\in U_Q$, define the shifted SDE family by
\begin{equation}\label{eq:shifted-sde}
\begin{aligned}
dY_t^\alpha(\xi)
&=\bigl[b_{\lambda_t^{\boldsymbol\iota,\tau}}^\circ(Y_t^\alpha(\xi))
  +\sigma_{\lambda_t^{\boldsymbol\iota,\tau}}(Y_t^\alpha(\xi))a(t)\xi\bigr]dt\\
&\quad+\sigma_{\lambda_t^{\boldsymbol\iota,\tau}}(Y_t^\alpha(\xi))
  \circ dW_t,\qquad Y_0^\alpha(\xi)=x.
\end{aligned}
\end{equation}
Thus $Y^\alpha(0)=Y^{x,\boldsymbol\iota,\tau}$ with the same
localized coefficients. For a deterministic input $w$, write
$Y^\alpha(\xi;w)$ for the path-valued version supplied by
Lemma~\ref{lem:classical-family}; $Y^\alpha(\xi;W)$ is the random path.
Its endpoint map is $G_{\alpha,W}(\xi)=Y_T^\alpha(\xi)$.
The derivative $D_\xi G_{\alpha,W}$ acts on $\R^d$, whereas
$D_u\mathcal E_{x,T,\lambda}$ acts on $L^2$ controls.
To transfer both paths and parameter derivatives by support, for a
$C^1$ family of paths $F$ use
\begin{equation}\label{eq:family-norm}
\|F\|_{Q,1}
=\sup_{\xi\in Q,\,t\le T}|F_t(\xi)|
 +\sup_{\xi\in Q,\,t\le T}\|D_\xi F_t(\xi)\|.
\end{equation}
Lemma~\ref{lem:classical-family} provides support in
$\|\cdot\|_{Q,1}$ and continuity in the initial point and switching
times. Support for one trajectory alone would not control
$D_\xi G_{\alpha,W}$, which is needed for a common image ball and
Jacobian bound in the averaging argument.

\begin{proof}[Proof of Theorem~\ref{prop:local}]
We construct a controlled family, transfer its estimates to the
shifted diffusion, and apply Lemma~\ref{lem:averaging} together with
the switching-path bound.

\emph{1. Construct the deterministic reference and its estimates.}
By \eqref{eq:regular-controlled-path},
\[
\phi_0=p,\qquad \phi_{T_0}=y_*,\qquad
D_u\mathcal E_{p,T_0,\lambda}(u_*):L^2\longrightarrow\R^d
\quad\hbox{is onto}.
\]
By \eqref{eq:admissible-controlled-path},
\begin{equation}\label{eq:local-path-admissibility}
q_{\iota_{r-1}\iota_r}(\phi_{\tau_r^*})>0,
\qquad 1\le r\le n.
\end{equation}
Choose a compact set $K^*$, an open set $V$, and a cutoff $\chi$ with
\[
\phi([0,T_0])\subset\operatorname{int}K^*,\qquad
K^*\subset V,\qquad
\chi\in C_c^\infty(\R^d;[0,1]),\qquad \chi|_V=1.
\]
Use $(\chi b_i,\chi\sigma_i)$, $i\in\{\iota_0,\ldots,\iota_n\}$,
as the localized It\^o coefficients in \eqref{eq:shifted-sde}.
Their Stratonovich drifts agree with the originals on $V$.
For $n\ge1$, continuity and \eqref{eq:local-path-admissibility}
give $\rho_*>0$ and $q_*>0$ such that
\[
q_{\iota_{r-1}\iota_r}(z)\ge q_*
\quad\text{if }|z-\phi_{\tau_r^*}|\le\rho_*,
\qquad 1\le r\le n.
\]
For $n=0$, take $\rho_*=q_*=1$.
Assumption~\ref{ass:local-construction}(ii) gives
\[
Q_*:=\sup_{\substack{z\in K^*\\i\text{ visited}}}q_i(z)<\infty.
\]
Uniform continuity of $\phi$ permits a choice of $\delta>0$ and a
switching-time box $\mathcal D\ni\tau^*$, with
$\overline{\mathcal D}\subset\Delta_n(T_0)$, such that
\begin{equation}\label{eq:local-uniform-path-tube}
\mathcal T=\{y\in C([0,T_0];\R^d):\|y-\phi\|_\infty<\delta\}
\end{equation}
is contained in $C([0,T_0];K^*)$ and
\[
\delta+\sup_{\tau\in\mathcal D}
|\phi_{\tau_r}-\phi_{\tau_r^*}|<\rho_*,\qquad 1\le r\le n.
\]
For $y\in\mathcal T$ and $\tau\in\mathcal D$,
$|y_{\tau_r}-\phi_{\tau_r^*}|<\rho_*$, so the two rate bounds
of Lemma~\ref{lem:path-expansion} hold uniformly.
For $n=0$, the switching-time box has volume one.

Lemma~\ref{lem:endpoint-directions} selects the directions $e_j$
in the preceding setup so that
\[
\langle e_j,e_k\rangle_{\mathcal H}=\delta_{jk},\qquad
H_0:=\bigl[D_u\mathcal E_{p,T_0,\lambda}(u_*)\dot e_1\ \cdots\
D_u\mathcal E_{p,T_0,\lambda}(u_*)\dot e_d\bigr]\in\R^{d\times d}
\quad\text{is invertible}.
\]
Let $\phi^0(\xi)$ be the controlled path from $p$ with control
$u_*+a\xi$, and set
\[
\begin{gathered}
u^\xi(t)=u_*(t)+a(t)\xi,\qquad \phi_0^0(\xi)=p,\qquad
\phi^0(0)=\phi,\\
\phi^0_{T_0}(\xi)=\mathcal E_{p,T_0,\lambda}(u^\xi),\qquad
\phi^0_{T_0}(0)=y_*,\qquad D_\xi\phi^0_{T_0}(0)=H_0.
\end{gathered}
\]
The trajectory continuity and continuous endpoint differential in
Lemma~\ref{lem:endpoint-directions} give $r>0$ such that, for
$Q=\overline B(0,2r)$,
\[
\sup_{\xi\in Q}\|\phi^0(\xi)-\phi\|_\infty<\delta/3,\qquad
\sup_{\xi\in Q}\|H_0^{-1}(D_\xi\phi^0_{T_0}(\xi)-H_0)\|<1/4.
\]

\emph{2. Transfer the estimates and verify the inverse-lemma hypotheses.}
Assumption~\ref{ass:local-construction}(i) gives
$\chi b_i\in C_c^{d+3}$ and $\chi\sigma_i\in C_c^{d+4}$ for every
visited regime, so Lemma~\ref{lem:classical-family} applies to the
localized shifted family on $[0,T_0]$. For $\alpha=(x,\tau)$ and
$\alpha_*=(p,\tau^*)$, write
\[
G(\xi)=Y^\alpha_{T_0}(\xi),\qquad I=B(0,r).
\]
Part~\textup{(i)} supplies jointly measurable versions with $C^1$
endpoint maps on the common full-measure set $\Omega_*$.
Choose
\[
0<\eta<\min\left\{\delta/3,
\frac{r}{8\|H_0^{-1}\|},\frac{1}{4\|H_0^{-1}\|}\right\}.
\]
Part~\textup{(ii)}, with $u_0=u_*$, gives support at $\alpha_*$;
part~\textup{(iii)} gives continuity in probability in the norm
\eqref{eq:family-norm}. Thus choose a neighborhood $U\ni p$ and
shrink $\mathcal D\ni\tau^*$ so that
\[
\begin{aligned}
&q_0:=\PP\{\|Y^{\alpha_*}-\phi^0\|_{Q,1}<\eta/2\}>0,\\
&\sup_{\alpha\in U\times\mathcal D}
\PP\{\|Y^\alpha-Y^{\alpha_*}\|_{Q,1}\ge\eta/2\}<q_0/2.
\end{aligned}
\]
For the measurable events
$E_\alpha=\Omega_*\cap\{\|Y^\alpha-\phi^0\|_{Q,1}<\eta\}$,
the triangle inequality gives
\[
\PP(E_\alpha)\ge q_0-
\PP\{\|Y^\alpha-Y^{\alpha_*}\|_{Q,1}\ge\eta/2\}
\ge q_0/2,\qquad \alpha\in U\times\mathcal D.
\]
On $E_\alpha$, the deterministic estimates from Step~1 yield
\[
\begin{aligned}
\sup_{\xi\in Q}\|Y^\alpha(\xi)-\phi\|_\infty
&<\eta+\delta/3<\delta,\\
|G(0)-y_*|&<\eta<\frac{r}{8\|H_0^{-1}\|},\\
\sup_{\xi\in Q}\|H_0^{-1}(D_\xi G(\xi)-H_0)\|
&<\|H_0^{-1}\|\eta+1/4<1/2.
\end{aligned}
\]
The first inequality gives $Y^\alpha(\xi)\in\mathcal T$ for every
$\xi\in Q$. Apply Lemma~\ref{lem:uniform-finite-dimensional-inverse}
with $A=H_0$, $s=\|H_0^{-1}\|^{-1}$, $\xi_0=0$, $\rho=r$, and
$\kappa=1/2$. Then $G|_I$ is a $C^1$ diffeomorphism and
\[
\begin{gathered}
B_*:=B\!\left(y_*,\frac{r}{4\|H_0^{-1}\|}\right)
\subset B\!\left(G(0),\frac{r}{2\|H_0^{-1}\|}\right)\subset G(I),\\
\sup_{\xi\in I}|\det D_\xi G(\xi)|\le(3\|H_0\|/2)^d=:J.
\end{gathered}
\]

\emph{3. Apply the averaging lemma and the switching likelihood.}
For every $\alpha\in U\times\mathcal D$, the event $E_\alpha$
has probability at least $q_0/2$. On it, the shifted paths belong
to $\mathcal T$, and their endpoint map has common image ball $B_*$
and upper Jacobian bound $J$. Joint measurability and the identity
under Cameron--Martin shifts follow from
Lemma~\ref{lem:classical-family}(i),(iv). Apply Lemma~\ref{lem:averaging}
with $q=q_0/2$, $O=B_*$, and $\mathcal T_\alpha=\mathcal T$ to obtain
a common $c_{\rm den}>0$ such that
\begin{equation}\label{eq:fixed-schedule-density}
\PP\{Y_{T_0}^{x,\boldsymbol\iota,\tau}\in D,
Y^{x,\boldsymbol\iota,\tau}\in\mathcal T\}
\ge c_{\rm den}\Leb(D\cap B_*),
\qquad (x,\tau)\in U\times\mathcal D.
\end{equation}
Since $\Leb_n(\mathcal D)>0$, the switching-path bound of
Lemma~\ref{lem:path-expansion} proves \eqref{eq:local} with
\[
c_*=e^{-Q_*T_0}q_*^n\Leb_n(\mathcal D)c_{\rm den}>0.
\]
\end{proof}

\subsection{Controlled entrance and common-time minorization}
\label{subsec:common-time-access}
The next lemma turns synchronized controlled entrance into a uniform probability bound on a compact set. It supplies the accessibility step in Theorem~\ref{thm:controlled-ergodicity}.

\begin{lem}[Synchronized controlled entrance]
\label{lem:controlled-entrance}
Suppose Assumption~\ref{ass:local-construction} holds.
Fix $T_{\rm a}>0$, a nonempty compact set $C\subset E$, an open
set $U\subset\R^d$, and $\ell_*\in\Sset$.
Suppose that for each $z=(x,i)\in C$ there are a prescribed switching schedule
$\lambda^z$ on $[0,T_{\rm a}]$ and a control
$u^z\in L^2([0,T_{\rm a}];\R^m)$ admissible from $x$ under
$\lambda^z$, with
\[
\lambda^z_0=i,\qquad \lambda^z_{T_{\rm a}}=\ell_*,\qquad
\phi^{x,\lambda^z,u^z}_{T_{\rm a}}\in U.
\]
Then
\[
\inf_{z\in C}P_{T_{\rm a}}(z,U\times\{\ell_*\})>0.
\]
\end{lem}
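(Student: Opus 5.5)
The plan is a compactness argument built on a pointwise positive lower bound that persists on a neighborhood. We will show that every $z_0=(x_0,i_0)\in C$ has a neighborhood $N_{z_0}$ in $E$ and a constant $c_{z_0}>0$ with $P_{T_{\rm a}}(z,U\times\{\ell_*\})\ge c_{z_0}$ for all $z\in N_{z_0}$. Since $\Sset$ is discrete, we may take $N_{z_0}=U_{z_0}\times\{i_0\}$ with $U_{z_0}$ an open neighborhood of $x_0$. Then $C$ is covered by finitely many such sets, and the infimum is at least the minimum of the finitely many $c_{z_0}$.

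For the local bound, fix $z_0$ and write $\lambda=\lambda^{z_0}=\lambda^{\boldsymbol\iota,\tau^*}$, $u=u^{z_0}$ and $\phi=\phi^{x_0,\lambda,u}$. Here $\iota_0=i_0$, $\iota_n=\ell_*$ and $\phi_{T_{\rm a}}\in U$. We repeat Step~1 of the proof of Theorem~\ref{prop:local} without any regularity requirement:
\begin{itemize}
\item choose a compact $K^*$ containing $\phi([0,T_{\rm a}])$ in its interior;
\item take $\rho_*,q_*$ from continuity of the rates and admissibility \eqref{eq:admissible-controlled-path};
\item set $Q_*=\sup_{K^*\times\{\iota_r\}}q_i<\infty$ by Assumption~\ref{ass:local-construction}(ii);
\item choose $\delta>0$ and a switching-time box $\mathcal D\ni\tau^*$ of positive $\Leb_n$-measure so that the tube $\mathcal T$ in \eqref{eq:local-uniform-path-tube} lies in $C([0,T_{\rm a}];K^*)$ and the rate bounds of Lemma~\ref{lem:path-expansion} hold on it.
\end{itemize}
We also shrink $\delta$ so that the $\delta$-ball about $\phi_{T_{\rm a}}$ lies in $U$. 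With $D=U$, Lemma~\ref{lem:path-expansion} then gives
\[
P_{T_{\rm a}}((x,i_0),U\times\{\ell_*\})\ge e^{-Q_*T_{\rm a}}q_*^n\int_{\mathcal D}\PP\{Y^{x,\boldsymbol\iota,\tau}\in\mathcal T\}\,d\tau,
\]
because $Y\in\mathcal T$ forces $Y_{T_{\rm a}}\in U$.

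It remains to bound $\PP\{Y^{x,\boldsymbol\iota,\tau}\in\mathcal T\}$ below uniformly for $x$ near $x_0$ and $\tau$ near $\tau^*$. We use Lemma~\ref{lem:classical-family} with the trivial parameter, i.e.\ the shifted family at $\xi=0$ and localized coefficients; only its zeroth-order part is needed. Part~(ii), the support statement with $u_0=u$, gives
\[
q_0:=\PP\{\|Y^{(x_0,\tau^*)}-\phi\|_\infty<\delta/2\}>0.
\]
Part~(iii), continuity in probability in $(x,\tau)$, gives a neighborhood $U_{z_0}\times\mathcal D'$ on which
\[
\PP\{\|Y^{(x,\tau)}-Y^{(x_0,\tau^*)}\|_\infty\ge\delta/2\}<q_0/2.
\]
Hence $\PP\{Y^{(x,\tau)}\in\mathcal T\}\ge q_0/2$ on $U_{z_0}\times\mathcal D'$, and we obtain
\[
c_{z_0}=e^{-Q_*T_{\rm a}}q_*^n\Leb_n(\mathcal D')\,q_0/2 .
\]
The localized solution agrees in law with \eqref{eq:prescribed-sde} on paths staying in $K^*$, and $\mathcal T\subset C([0,T_{\rm a}];K^*)$, so the bound transfers to the original process.

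The main obstacle is that Lemma~\ref{lem:classical-family} is stated with the regularity and a nondegenerate direction family intended for Theorem~\ref{prop:local}, whereas here $u$ is only admissible, not regular. We will therefore check that its support and continuity parts apply with an arbitrary $u_0\in L^2$ and any choice of directions, for instance $d$ arbitrary orthonormal step directions, evaluated at $\xi=0$. If that check fails, the fallback is to invoke the classical Stroock–Varadhan support theorem for the prescribed-schedule SDE with bounded smooth localized coefficients. We would combine it with uniform-in-probability continuity of solutions in the initial point and, via the Lipschitz flow, in the switching times: changing $\tau$ alters the coefficients only on intervals of length $O(|\tau-\tau^*|)$.
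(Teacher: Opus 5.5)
Your proposal is correct and follows the paper's proof essentially step for step. Both arguments localize the coefficients, choose a tube and a switching-time box from rate continuity, use Lemma~\ref{lem:classical-family}(ii)--(iii) to bound the tube probability uniformly near $(x_0,\tau^*)$, and then combine Lemma~\ref{lem:path-expansion} with a finite cover of $C$. The check you were worried about succeeds, since that lemma needs only step-function $\dot e_j$ and an arbitrary $u_0\in L^2$; the paper simply takes $e_1=\cdots=e_d=0$, so that $\|\cdot\|_{Q,1}$ reduces to the uniform norm, whereas with nonzero directions you should choose $Q\ni 0$ and take $\mathcal D'\subset\mathcal D$.
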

\begin{proof}
Fix $z=(x,i)\in C$ and choose $\lambda^z,u^z$ as in the
statement. For this initial state, write
$\lambda=\lambda^z=\lambda^{\boldsymbol\iota,\tau^*}$,
$u=u^z$, and $\psi=\phi^{x,\lambda,u}$, where
$\tau^*\in\Delta_n(T_{\rm a})$.
Choose a compact neighborhood $K^*$ of $\psi([0,T_{\rm a}])$ and
localize the visited It\^o coefficients by a smooth cutoff equal
to one near $K^*$. The localized controlled path is still $\psi$.
Continuity of the selected rates at its switching locations gives
$\delta>0$, a switching-time box $\mathcal D\ni\tau^*$ with
$\overline{\mathcal D}\subset\Delta_n(T_{\rm a})$, and $q_*>0$
such that, whenever $\|y-\psi\|_\infty<\delta$ and
$\tau\in\mathcal D$,
\[
y([0,T_{\rm a}])\subset K^*,\qquad y(T_{\rm a})\in U,\qquad
q_{\iota_{r-1}\iota_r}(y_{\tau_r})\ge q_*,\quad 1\le r\le n.
\]
The total rates of the finitely many visited regimes are bounded
on $K^*$ by some $Q_*<\infty$.
Apply Lemma~\ref{lem:classical-family}(ii)--(iii) to the localized
coefficients, with reference control $u$ and
$e_1=\cdots=e_d=0$.
After shrinking $\mathcal D$, these support and continuity statements
give a neighborhood $N_z$ of $x$ and $\beta_z>0$ such that
\[
\inf_{\substack{x'\in N_z\\\tau\in\mathcal D}}
\PP\{\|Y^{x',\boldsymbol\iota,\tau}-\psi\|_\infty<\delta\}
\ge\beta_z.
\]
Lemma~\ref{lem:path-expansion} therefore yields
\[
\inf_{x'\in N_z}P_{T_{\rm a}}((x',i),U\times\{\ell_*\})
\ge e^{-Q_*T_{\rm a}}q_*^n\Leb_n(\mathcal D)\beta_z>0.
\]
For $n=0$, use $q_*=1$ and the existing convention
$\Leb_0(\mathcal D)=1$.
Finitely many sets $N_z\times\{i\}$ cover $C$; the minimum of
their positive lower bounds proves the claim.
\end{proof}

\begin{proof}[Proof of Theorem~\ref{thm:common-time-minorization}]
\emph{1. Fix the local lower bound.}
Proposition~\ref{prop:regular-word} supplies a fixed prescribed switching schedule
$\lambda^*$ of duration $T_0$, starting and ending in $\ell_*$,
and a control $u_*$ whose path returns from $p_*$ to $p_*$
with a surjective endpoint differential. The graph-edge transitions
have positive rates, so $u_*$ is admissible and regular from $p_*$
under $\lambda^*$. Theorem~\ref{prop:local} applies with both
spatial endpoints equal to $p_*$ and both endpoint regimes equal
to $\ell_*$. Write $U,B_*,c_*$ for its local bound.

For $A\in\B(E)$, put
$A_{\ell_*}=\{w\in\R^d:(w,\ell_*)\in A\}$. Then, for $y\in U$,
\[
\begin{aligned}
P_{T_0}((y,\ell_*),A)
&\ge P_{T_0}((y,\ell_*),A_{\ell_*}\times\{\ell_*\})\\
&\ge c_*\Leb(A_{\ell_*}\cap B_*)
=\varepsilon_0\nu(A),\qquad
\varepsilon_0=c_*\Leb(B_*)>0,
\end{aligned}
\]
where $\nu$ is defined by \eqref{eq:common-target-probability}.
Fix these objects before choosing the starting compact set.

\emph{2. Arrange a common arrival time.}
Keep the prescribed switching schedule $\lambda^*$ from Step~1 and fix $(x,i)\in E$.
Assumption~\ref{ass:smoothing}(i) gives graph paths
\[
i=i_0\to\cdots\to i_k=j_0,\qquad
j_0\to\cdots\to j_\ell=\ell_*,\qquad
i_r\notin\mathcal J\ (r<k),\quad j_s\in\mathcal J\ (s\le\ell).
\]
For $k\ge1$, set
\[
M_{x,i}=\max_{0\le r<k}\sup_{|y-x|\le1}|b_{i_r}^\circ(y)|<\infty,
\qquad t_*=\frac1{4(1+M_{x,i})};
\]
for $k=0$, put $t_*=0$ and omit the outside visits.
Let the prescribed switching schedule $\lambda^{x,i}$ visit $i_0,\ldots,i_{k-1}$ for time $t_*/k$ each,
then $j_0,\ldots,j_\ell$ for time $(1-t_*)/(\ell+1)$ each.
Define $\eta^{x,i}_0=x$ and
\[
\dot\eta^{x,i}_t=
\begin{cases}
b_{\lambda^{x,i}_t}^\circ(\eta^{x,i}_t),&0\le t<t_*,\\
A_{\lambda^{x,i}_t}\eta^{x,i}_t+c_{\lambda^{x,i}_t},&t_*\le t\le1.
\end{cases}
\]
For $k\ge1$, up to the first exit from $B(x,1)$, the first part satisfies
$|\eta^{x,i}_t-x|\le M_{x,i}t_*<1/4$, so local ODE continuation
gives existence through $t_*$. The affine part exists through time $1$.
By \eqref{ext:eq:feedbackfields}, this is a controlled path with
\[
w^{x,i}_t=
\begin{cases}
0,&0\le t<t_*,\\
-f_{\lambda^{x,i}_t}(\eta^{x,i}_t),&t_*\le t\le1,
\end{cases}
\qquad w^{x,i}\in L^2([0,1];\R^m),
\]
since the path is compact and the visited $f_j$ are continuous.
Proposition~\ref{prop:regular-word} supplies
$u^{x,i}\in L^2([0,T_0];\R^m)$ and its controlled path
$\phi^{x,i}$ on the prescribed switching schedule $\lambda^*$, with
\[
\phi^{x,i}_0=\eta^{x,i}_1,\qquad
\phi^{x,i}_{T_0}=\mathcal E_{\eta^{x,i}_1,T_0,\lambda^*}(u^{x,i})=p_*.
\]
Append $\lambda^*$ to $\lambda^{x,i}$ at time $1$ and concatenate
the controls $w^{x,i}$ and $u^{x,i}$. With $T_{\rm acc}=1+T_0$, this gives
\[
\psi^{x,i}_t=
\begin{cases}
\eta^{x,i}_t,&0\le t\le1,\\
\phi^{x,i}_{t-1},&1<t\le T_{\rm acc},
\end{cases}
\qquad
\psi^{x,i}_0=x,\quad \psi^{x,i}_{T_{\rm acc}}=p_*.
\]
Combine the two adjoining $\ell_*$ visits at $t=1$.
The resulting prescribed switching schedule has positive visit lengths, graph-edge
transitions, and terminal regime $\ell_*$, at the common time
$T_{\rm acc}=1+T_0$.

\emph{3. Obtain an entrance bound uniform over $K\times F$.}
The case $K\times F=\varnothing$ is vacuous.
Otherwise, $K\times F$ is a nonempty compact subset of $E$.
For each of its points, Step~2 constructs a prescribed switching schedule and an
admissible control whose joint path ends at
$(p_*,\ell_*)\in U\times\{\ell_*\}$ at time $T_{\rm acc}=1+T_0$.
Lemma~\ref{lem:controlled-entrance} therefore gives
$p_{K,F}>0$ such that
\begin{equation}\label{eq:access-bound}
\inf_{x\in K,\,i\in F}
P_{T_{\rm acc}}((x,i),U\times\{\ell_*\})\ge p_{K,F}>0.
\end{equation}
The time and local target neighborhood were fixed before
choosing $K,F$; only the positive lower bound depends on them.

\emph{4. Apply the Markov property.}
Set $h=T_{\rm acc}+T_0=1+2T_0$. For $z\in K\times F$ and $A\in\B(E)$,
\[
\begin{aligned}
P_h(z,A)
&=\int_E P_{T_{\rm acc}}(z,dw)P_{T_0}(w,A)\\
&\ge\int_{U\times\{\ell_*\}}
P_{T_{\rm acc}}(z,dw)P_{T_0}(w,A)\\
&\ge\varepsilon_0\nu(A)
P_{T_{\rm acc}}(z,U\times\{\ell_*\})\\
&\ge p_{K,F}\varepsilon_0\nu(A).
\end{aligned}
\]
Thus \eqref{eq:coefficient-minorization} holds with
$\varepsilon_{K,F}=p_{K,F}\varepsilon_0$.
\end{proof}

\section{Applications and examples}\label{sec:examples}
\subsection{Spatially restricted switching rates}
Let $d=2$, $m=1$, and $\Sset=\Nzero$. Choose
$\chi\in C_c^\infty(\R^2;[0,1])$ equal to one near zero, and set
\[
\begin{gathered}
b_i=0\ (i\ge0),\qquad
\sigma_0=\binom{1}{0},\quad\sigma_1=\binom{0}{1},\quad
\sigma_i=0\ (i\ge2),\\
q_{01}=q_{10}=\chi,\qquad q_{ij}=0\quad\text{for all other }i\ne j.
\end{gathered}
\]
Assumption~\ref{ass:local-construction} holds, and bounded coefficients
and total rates give nonexplosion. For the prescribed switching
schedule $\lambda_t=0$ on $[0,1)$ and $\lambda_t=1$ on $[1,2]$,
the control $u_*=0$ from $p=0$ gives $\phi\equiv0$ and
$q_{01}(\phi_1)=1$. Its endpoint differential is
\[
D_u\mathcal E_{0,2,\lambda}(0)v
=\left(\int_0^1v_s\,ds,\ \int_1^2v_s\,ds\right)^\top.
\]
The images of $\one_{[0,1)}$ and $\one_{[1,2]}$ form the coordinate
basis, so Theorem~\ref{prop:local} gives \eqref{eq:local} with
$T_0=2$, $\iota_0=0$, $\iota_n=1$, and $y_*=0$.
Every frozen law is supported on a line or a point, and no
off-diagonal rate is positive at every spatial point.

\subsection{A network with countable regimes}

Let $\mathcal E$ be the edge set of a connected undirected graph on
$\{1,\ldots,N\}$, where $N\ge3$. Write $\mathbf v_1,\ldots,\mathbf v_N$
for the standard basis of $\R^N$, so $(\mathbf v_j)_k=\one_{\{j=k\}}$.
For each edge $e=\{j,k\}\in\mathcal E$, define its edge Laplacian and
network matrices by
\[
L_e=(\mathbf v_j-\mathbf v_k)(\mathbf v_j-\mathbf v_k)^\top,
\qquad D_e=-a\mathbf v_1\mathbf v_1^\top-\kappa_eL_e,
\qquad A_e=D_e+\delta I_N,
\]
where $a>0$, $\kappa_e>0$, $\delta\ge0$, and $I_N$ is the
$N\times N$ identity matrix. Let $\mathcal I\subset\Nzero$ be a
countably infinite set of noise levels. The remaining coefficients are
$c_e\in\R^N$, $s_n\ge0$, and
$g_e\in C^{N+4}(\R^N;\R)$, with
\[
0<g_-\le g_e(x)\le g_+<\infty,
\qquad \sup_x\bigl(|Dg_e(x)|+\|D^2g_e(x)\|\bigr)<\infty.
\]

The edge and level rate matrices are $Q(x)=(q_{ef}(x))_{e,f\in\mathcal E}$
and $R=(r_{nm})_{n,m\in\mathcal I}$. A constant reference matrix
$Q^0=(q^0_{ef})_{e,f\in\mathcal E}$ will define the quadratic Lyapunov
weights. Their off-diagonal entries are nonnegative, and
\[
q_{ee}(x)=-\sum_{f\ne e}q_{ef}(x),\qquad
q^0_{ee}=-\sum_{f\ne e}q^0_{ef},\qquad
r_{nn}=-r_n,\quad r_n=\sum_{m\ne n}r_{nm}<\infty.
\]
The matrices $Q^0$ and $R$ are irreducible.
The switching state $\Lambda_t=(E_t,N_t)\in\mathcal E\times\mathcal I$
records the active edge and noise level. Its off-diagonal rates at
spatial position $x\in\R^N$ are
\[
q_{(e,n),(f,m)}(x)=
\begin{cases}
q_{ef}(x),&f\ne e,\ m=n,\\
r_{nm},&f=e,\ m\ne n,\\
0,&f\ne e,\ m\ne n.
\end{cases}
\]
For one-dimensional Brownian motion $W$, the vector
$X_t=(X_t^1,\ldots,X_t^N)^\top\in\R^N$ of node states solves
\begin{equation}\label{eq:local-network}
dX_t=(A_{E_{t-}}X_t+c_{E_{t-}})\,dt
     +[g_{E_{t-}}(X_t)+s_{N_{t-}}]\mathbf v_1\circ dW_t.
\end{equation}
For $e=\{j,k\}$, the coupling term is
$-\kappa_eL_ex=-\kappa_e(x_j-x_k)(\mathbf v_j-\mathbf v_k)$:
this term couples only the two endpoints of the active edge. The term $-ax_1\mathbf v_1$
damps node~1, $\delta x$ gives uniform growth, and $c_e$ is an affine
input. Noise acts only at node~1, with state-dependent amplitude $g_e$
and level-dependent amplitude $s_n$.
This is a continuous-time stochastic network built from the pairwise
averaging mechanism of \citet[Section~I-A, (1)--(2)]{BoydEtAl2006}
and the switching pinning framework of
\citet[Section~2, (3)]{HanEtAl2014}, with persistent Brownian forcing
and a countable noise-level coordinate.

The degeneracy of every frozen regime is essential to this example.
For $e=\{j,k\}$, choose
$0\ne z\in\operatorname{span}\{\mathbf v_1,\mathbf v_j-\mathbf v_k\}^{\perp}$,
which is possible because $N\ge3$. With $(E_t,N_t)\equiv(e,n)$,
$z^\top A_e=\delta z^\top$ and
$z^\top\mathbf v_1=0$, so
\begin{equation}\label{eq:frozen-projection}
	z^\top X_t=e^{\delta t}z^\top x+
	\left(\int_0^te^{\delta(t-s)}\,ds\right)z^\top c_e.
\end{equation}
Thus each frozen transition law is supported on an affine hyperplane,
and different initial projections have disjoint supports. In particular,
no frozen regime admits a common nonzero minorizing measure on a
spatial ball. 

To construct a Lyapunov function for the joint process, use the
coupled Lyapunov equations for the reference drift $D_e$ and rates $Q^0$:
\[
D_e^\top P_e+P_eD_e+\sum_{f\ne e}q^0_{ef}(P_f-P_e)=-I_N,
\qquad e\in\mathcal E.
\]
Lemma~\ref{lem:network-lyapunov} gives their unique symmetric solution,
with $P_e$ positive definite. For the reference switching dynamics,
these equations make the generator applied to $x^\top P_ex$ equal to
$-|x|^2$. The following assumptions control the added growth, the
change in edge rates, and the noise levels.
\begin{enumerate}[label=\textup{(N\arabic*)},ref=N\arabic*,leftmargin=*]
\item\label{cond:network-edge}
The functions $q_{ef}$, $e\ne f$, are bounded and Lipschitz.
The directed graph with edge $e\to f$ whenever
$q_{ef}(x)>0$ for every $x\in\R^N$ is strongly connected.
\item\label{cond:network-margin}
The matrices $P_e$ satisfy the margin condition
\begin{equation}\label{eq:network-margin}
\begin{aligned}
p_{\max}&=\max_e\|P_e\|,\qquad
\varepsilon_Q=
\max_e\sum_{f\ne e}\sup_x|q_{ef}(x)-q^0_{ef}|\,\|P_f-P_e\|,\\
\eta_0&=1-2\delta p_{\max}-\varepsilon_Q>0.
\end{aligned}
\end{equation}
\item\label{cond:network-level}
There is a function $\chi:\mathcal I\to[1,\infty)$ with finite
sublevel sets and constants $\kappa,C>0$, $B\ge0$ such that
\begin{equation}\label{eq:network-level-condition}
\begin{aligned}
&\sum_{m\ne n}r_{nm}|\chi(m)-\chi(n)|<\infty,\\
&R\chi(n):=\sum_{m\ne n}r_{nm}(\chi(m)-\chi(n))
   \le-\kappa\chi(n)+B,\qquad
1+s_n^2\le C\chi(n),\quad n\in\mathcal I.
\end{aligned}
\end{equation}
\end{enumerate}
For $\theta>0$, define
$V(x,e,n)=1+x^\top P_ex+\theta\chi(n)$ on
$\R^N\times\mathcal E\times\mathcal I$.

\Needspace{7\baselineskip}
\begin{prop}[Ergodicity of the switching network]
\label{prop:countable-network}
Under conditions \ref{cond:network-edge}--\ref{cond:network-level},
the joint process $(X_t,E_t,N_t)$ defined by \eqref{eq:local-network}
and the above switching rates has a unique nonexplosive strong Markov
solution. For all sufficiently large $\theta$, its transition semigroup
satisfies \eqref{eq:weighted-ergodicity} with weight $V$, and its
unique invariant probability $\pi$ satisfies $\pi(V)<\infty$.
\end{prop}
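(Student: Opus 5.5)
The plan is to deduce Proposition~\ref{prop:countable-network} from Corollary~\ref{thm:coefficient-ergodicity}. The Lyapunov part of Assumption~\ref{ass:joint-lyapunov} will come from the quadratic Lyapunov criterion, and Assumptions~\ref{ass:coefficient-representation} and~\ref{ass:smoothing} will be checked with $\mathcal J=\mathcal E\times\{n_0\}$ for one fixed level $n_0\in\mathcal I$.

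\emph{Assumption~\ref{ass:local-construction} and the coefficient representation.} Write $\sigma_{(e,n)}(x)=(g_e(x)+s_n)\mathbf v_1$. The Stratonovich equation \eqref{eq:local-network} has It\^o drift $b_{(e,n)}=A_ex+c_e+\tfrac12(g_e+s_n)\partial_1g_e\,\mathbf v_1$, which is $C^{N+3}$. The diffusion coefficient is $C^{N+4}$. The rates are continuous, and the total rate from $(e,n)$ is bounded by $\sup q_e+r_n$, which is locally bounded over finite sets of regimes. Since $b^\circ_{(e,n)}=A_ex+c_e$, the representation \eqref{ext:eq:feedbackfields} holds with $A_{(e,n_0)}=A_e$, $c_{(e,n_0)}=c_e$, $f\equiv0$, $B_{(e,n_0)}=\mathbf v_1$ and $R_{(e,n_0)}(x)=1/(g_e(x)+s_{n_0})$. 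The map $R$ is $C^2$ because $g_e\ge g_->0$ and $s_{n_0}\ge0$.

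\emph{Accessibility and rank.} Edges $(e,n_0)\to(f,n_0)$ exist whenever $q_{ef}>0$ everywhere, so \ref{cond:network-edge} makes $\mathcal G[\mathcal J]$ strongly connected. The noise-level component has constant rates and $R$ is irreducible, so every $(e,n)$ reaches $(e,n_0)$ along a finite path of positive constant rates. For the rank condition, take $\mathcal R=\mathcal J$. Since $\delta I$ does not change invariant subspaces, it suffices to show that the smallest subspace containing $\mathbf v_1$ and invariant under every $L_e$ (and under $\mathbf v_1\mathbf v_1^\top$) is $\R^N$. By induction, suppose $\mathbf v_j$ lies in this subspace and $\{j,k\}\in\mathcal E$. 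Then $L_{\{j,k\}}\mathbf v_j=\mathbf v_j-\mathbf v_k$ lies in it, and hence so does $\mathbf v_k$. Connectivity of the graph then gives every $\mathbf v_k$, so $\mathcal V_d=\R^N$.

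\emph{Lyapunov function.} This is the main computational step. Take $P_{(e,n)}=P_e$ from Lemma~\ref{lem:network-lyapunov}; these matrices are uniformly bounded above and below. Using the coupled equations, the drift quantity $D_P$ splits into four parts:
\begin{itemize}[leftmargin=*]
\item $2x^\top P_eA_ex+x^\top Q^0P\,x=-|x|^2+2\delta x^\top P_ex$;
\item a rate-perturbation term bounded by $\varepsilon_Q|x|^2$;
\item a cross term $2x^\top P_e(c_e+\tfrac12(g_e+s_n)\partial_1g_e\mathbf v_1)$;
\item a trace term $(g_e+s_n)^2\mathbf v_1^\top P_e\mathbf v_1$.
\end{itemize}
Level changes leave $P$ unchanged, so they contribute only through $\theta R\chi$. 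The first two parts give $-\eta_0|x|^2$. Young's inequality bounds the cross term by $\tfrac{\eta_0}{2}|x|^2+C'(1+s_n^2)$. The trace term is at most $C''(1+s_n^2)\le C''C\chi(n)$.

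Taking $\theta$ large, $\theta R\chi\le-\theta\kappa\chi+\theta B$ absorbs these $\chi(n)$ terms and gives \eqref{eq:joint-quadratic-certificate} with $\eta=\eta_0/2$. Since $\chi$ has finite sublevel sets on $\mathcal I$ and $\mathcal E$ is finite, $\chi$ extended to $\mathcal E\times\mathcal I$ satisfies the criterion's hypotheses, including local boundedness of the jump sum. The quadratic Lyapunov criterion then yields Assumption~\ref{ass:joint-lyapunov}. Corollary~\ref{thm:coefficient-ergodicity} now gives nonexplosion, uniqueness of $\pi$, $\pi(V)<\infty$, and \eqref{eq:weighted-ergodicity}. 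Existence and uniqueness of the strong Markov solution come from the interlacing construction noted after Assumption~\ref{ass:local-construction}.

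The step needing most care is the Lyapunov bookkeeping: the unbounded $s_n$ enters both the trace term and the Itô correction, and both must be absorbed into $\chi$ uniformly in $n$.
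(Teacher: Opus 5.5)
Your proposal is correct and follows the paper's own proof essentially step for step. Both arguments use the same choice $\mathcal R=\mathcal J=\mathcal E\times\{n_0\}$ with $B=\mathbf v_1$, $f=0$, $R=(g_e+s_{n_0})^{-1}$, the same induction along graph edges for the rank condition, the same bound $M_e(x)\preceq-\eta_0 I$ with Young's inequality absorbing the $C(1+s_n^2)$ terms into $\theta\chi$, and the same final appeal to Corollary~\ref{thm:coefficient-ergodicity}; the only cosmetic difference is that you route the Lyapunov step through the quadratic Lyapunov criterion (extending $\chi$ to $\mathcal E\times\mathcal I$), whereas the paper verifies Assumption~\ref{ass:joint-lyapunov} directly by the identical computation.
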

\begin{proof}
The It\^o coefficients are
\[
\begin{aligned}
b_{e,n}(x)&=A_ex+r_{e,n}(x),\qquad
r_{e,n}(x)=c_e+\tfrac12[g_e(x)+s_n]\partial_1g_e(x)\mathbf v_1,\\
\sigma_{e,n}(x)&=[g_e(x)+s_n]\mathbf v_1.
\end{aligned}
\]
For each $(e,n)$, boundedness of $g_e$ and its first two
derivatives gives global Lipschitz continuity. The total jump rate
$\sum_{f\ne e}q_{ef}(x)+r_n$ is bounded on each finite regime set.
Thus the rates satisfy Assumption~\ref{ass:local-construction}(ii).

Fix $n_*\in\mathcal I$ and take
$\mathcal R=\mathcal J=\mathcal E\times\{n_*\}$.
The Stratonovich drift is $b_{e,n}^\circ(x)=A_ex+c_e$, and the coefficient
representation holds on $\mathcal J$ with
$B_{e,n}=\mathbf v_1$, $f_{e,n}=0$, and $R_{e,n}(x)=[g_e(x)+s_n]^{-1}$.
Let $\mathcal W$ be the smallest subspace containing $\mathbf v_1$ and invariant
under every $A_e$. If $\mathbf v_j\in\mathcal W$ and $e=\{j,k\}$, then
\[
\mathbf v_k=\mathbf v_j+\frac1{\kappa_e}
\left(A_e\mathbf v_j-\delta \mathbf v_j+a\one_{\{j=1\}}\mathbf v_1\right)\in\mathcal W.
\]
Connectivity of the spatial graph gives $\mathcal W=\R^N$, hence
$\mathcal V_N=\R^N$ in \eqref{eq:rank-iteration}.
The joint regime graph and the induced graph on $\mathcal J$
are strongly connected, verifying Assumption~\ref{ass:smoothing}(i).
The stated smoothness gives Assumption~\ref{ass:local-construction}(i), completing the verification
of Assumptions~\ref{ass:local-construction},
\ref{ass:coefficient-representation}, and~\ref{ass:smoothing}.

Level transitions leave $P_e$ unchanged. The matrix equation therefore gives
\[
\begin{aligned}
M_e(x)&:=A_e^\top P_e+P_eA_e+\sum_{f\ne e}q_{ef}(x)(P_f-P_e)\\
&=-I+2\delta P_e+
   \sum_{f\ne e}(q_{ef}(x)-q^0_{ef})(P_f-P_e)
\preceq-\eta_0 I.
\end{aligned}
\]
Boundedness of $g_e$, $\partial_1g_e$, and $c_e$, together with
Young's inequality, yields a constant $C_0<\infty$, independent of
$x,e,n$, such that
\[
2x^\top P_er_{e,n}(x)+[g_e(x)+s_n]^2\mathbf v_1^\top P_e\mathbf v_1
\le\frac{\eta_0}2|x|^2+C_0(1+s_n^2).
\]
Consequently
\[
\begin{aligned}
\A V(x,e,n)
&=x^\top M_e(x)x+2x^\top P_er_{e,n}(x)
  +[g_e(x)+s_n]^2\mathbf v_1^\top P_e\mathbf v_1+\theta R\chi(n)\\
&\le-\frac{\eta_0}2|x|^2-(\theta\kappa-C_0C)\chi(n)+\theta B.
\end{aligned}
\]
Choose $\theta>C_0C/\kappa$ and put
\[
a_V=\min\left\{\frac{\eta_0}{2p_{\max}},
                    \kappa-\frac{C_0C}{\theta}\right\}>0.
\]
Then $\A V\le-a_VV+\theta B+a_V$.
Moreover, with $\underline p=\min_e\lambda_{\min}(P_e)>0$,
\[
\begin{aligned}
V(x,e,n)&\ge1+\underline p|x|^2+\theta\chi(n),\\
J_V(x,e,n)&\le |x|^2\sum_{f\ne e}q_{ef}(x)\|P_f-P_e\|
   +\theta\sum_{m\ne n}r_{nm}|\chi(m)-\chi(n)|.
\end{aligned}
\]
These bounds give compact sublevel sets for $V$ and local
boundedness of $J_V$, verifying Assumption~\ref{ass:joint-lyapunov}.
Corollary~\ref{thm:coefficient-ergodicity} gives nonexplosion and
weighted exponential ergodicity of $(X_t,E_t,N_t)$.
\end{proof}

\begin{remark*}[An explicit countable network]
Take $N=3$, $\mathcal E=\{e_1,e_2\}$ with
$e_1=\{1,2\}$ and $e_2=\{2,3\}$, and set
$a=\kappa_{e_1}=\kappa_{e_2}=1$, $c_{e_1}=c_{e_2}=0$,
$\delta=1/100$, and $q^0_{e_1e_2}=q^0_{e_2e_1}=1$.
Choose
\[
\begin{gathered}
g_{e_1}(x)=g_{e_2}(x)=2+\sin x_1,\qquad
q_{e_1e_2}(x)=1+\tfrac1{100}\sin x_1,\qquad
q_{e_2e_1}(x)=1+\tfrac1{100}\cos x_1,\\
\mathcal I=\Nzero,\qquad s_n=\sqrt n,\qquad
r_{n,n+1}=1\ (n\ge0),\qquad r_{n,n-1}=n\ (n\ge1),
\end{gathered}
\]
with all other off-diagonal level rates zero.
The edge rates are bounded and Lipschitz and are at least $99/100$,
which verifies \ref{cond:network-edge}. The coupled Lyapunov matrices are
\[
P_{e_1}=\frac1{697}
\begin{pmatrix}386&436&460\\436&1137&1153\\460&1153&3240\end{pmatrix},
\qquad
P_{e_2}=\frac1{697}
\begin{pmatrix}361&221&227\\221&1842&1846\\227&1846&2543\end{pmatrix}.
\]
Positivity gives $\|P_e\|\le\operatorname{tr}P_e<7$; the row sums
of the symmetric difference give
$\|P_{e_2}-P_{e_1}\|\le1623/697<3$. Thus
$p_{\max}<7$, $\varepsilon_Q<3/100$, and $\eta_0>83/100$,
verifying \ref{cond:network-margin}.
For $\chi(n)=1+n$,
\[
\sum_{m\ne n}r_{nm}|\chi(m)-\chi(n)|=1+n,\qquad
R\chi(n)=-\chi(n)+2,\qquad 1+s_n^2=\chi(n),
\]
which verifies \ref{cond:network-level} with $\kappa=C=1$ and $B=2$.
Proposition~\ref{prop:countable-network} therefore gives exponential
ergodicity with state-dependent rates, multiplicative noise, and
unbounded noise amplitudes. In every frozen regime,
\eqref{eq:frozen-projection} still gives an unforced projection
expanding at rate $1/100$.
\end{remark*}

Table~\ref{tab:network-comparison} compares the hypotheses of
related results. The network violates the frozen regularity and
small-set hypotheses in the first four rows, by rank-one noise and
\eqref{eq:frozen-projection}.

\begin{table}[!htbp]
\caption{Comparison with related criteria.}\label{tab:network-comparison}
\small
\centering
\setlength{\tabcolsep}{5pt}
\renewcommand{\arraystretch}{1.05}
\begin{tabular}{@{}>{\raggedright\arraybackslash}p{0.44\linewidth}
>{\raggedright\arraybackslash}p{\dimexpr0.56\linewidth-2\tabcolsep\relax}@{}}
\toprule
Reference & Required condition \\
\midrule
\citet[Ass.~3.2(i), Thm.~3.2]{Kunwai2022}
& Uniform ellipticity in every regime. \\[2pt]
\citet[Ass.~4.1, Thm.~6.3]{XiZhu2017}
& Strong Feller frozen diffusions with Lebesgue densities. \\[2pt]
\citet[Def.~3.3, Thm.~3.4]{TongMajda2016}
& Minorization for an accessible frozen regime. \\[2pt]
\citet[Thms.~1.7--1.8]{CloezHairer2015}
& Small Lyapunov sublevel sets for one frozen semigroup. \\[2pt]
\citet[(A4), (A6), Thms.~3.5, 4.5]{LiXi2022TV}
& Scalar radial drift yielding a nonsingular $M$-matrix or a
negative stationary average. \\[2pt]
\citet[(14), Thm.~4.6]{HuNualartSunXie2019}
& Uniform H\"ormander condition in every regime. \\
\bottomrule
\end{tabular}
\par\smallskip
\begin{minipage}{\linewidth}
\footnotesize
For the finite-level analogue with constant rates and $\delta>0$,
the Li--Xi criteria fail because
every radial coefficient is at least $2\delta$; their joint minorization
assumption remains compatible. For the density theorem of Hu et~al., even when $g_e\in C^\infty$,
the covector $z^\top$ in \eqref{eq:frozen-projection} annihilates
all noise-generated brackets in the frozen regime.
\end{minipage}
\end{table}

The no-switch event has positive probability, so the full
transition law retains a singular component at every finite time.

\appendix
\Needspace{24\baselineskip}
\section{Local analytic ingredients}\label{app:analytic}
\subsection{Deterministic endpoint estimates}\label{app:deterministic}
We prove the endpoint differentiation and inverse-map estimates used
in Section~\ref{sec:rank-cycle}. For the endpoint result, fix
$x\in\R^d$, $T>0$, and a prescribed switching schedule $\lambda$
on $[0,T]$ as defined in Section~\ref{subsec:controlled-paths}, and set
$\mathcal I=\{\lambda_t:t\in[0,T]\}$.
Recall the controlled equation and endpoint map from
\eqref{eq:skeleton} in Section~\ref{subsec:controlled-paths}.
Let $u\in L^2([0,T];\R^m)$ have a controlled path $\phi$ on
$[0,T]$, and let $O\subset\R^d$ be an open neighborhood of the
compact set $\phi([0,T])$.
Write $\sigma_i^{(k)}$ for the $k$th column of $\sigma_i$ and
$(u_t)_k$ for the $k$th component of $u_t$.

\begin{lem}[Differentiating the endpoint]
\label{lem:endpoint-directions}
Suppose $b_i^\circ,\sigma_i$ are $C^2$ on $O$ for every
$i\in\mathcal I$. Then $\mathcal E_{x,T,\lambda}$ is continuously
Fr\'echet differentiable on an $L^2$ neighborhood of $u$, with
\[
D_u\mathcal E_{x,T,\lambda}(u)v
 =\int_0^T J_{T,s}\sigma_{\lambda_s}(\phi_s)v_s\,ds,
\qquad v\in L^2([0,T];\R^m).
\]
Here $\partial_tJ_{t,s}=C_tJ_{t,s}$, $J_{s,s}=I$, and
\[
C_s=Db_{\lambda_s}^\circ(\phi_s)
 +\sum_{k=1}^m (u_s)_kD\sigma_{\lambda_s}^{(k)}(\phi_s).
\]
If the derivative is onto $\R^d$, its restriction to the span of
$d$ orthonormal step functions is invertible.
\end{lem}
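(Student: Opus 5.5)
The plan is to treat the controlled equation \eqref{eq:skeleton} as a piecewise-smooth ODE in which the regime is frozen on each interval $[\tau_r,\tau_{r+1})$. The endpoint map is then a composition of the flows of the individual regimes, and I will differentiate the controlled path $\phi^{x,\lambda,u}$ as a map into $C([0,T];\R^d)$.

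\textbf{Step 1: existence and uniform localization.} Choose $\epsilon>0$ so that the closed $\epsilon$-neighborhood $O_\epsilon$ of $\phi([0,T])$ lies in $O$. Let $M$ bound $b_i^\circ,\sigma_i$ and their first two derivatives on $O_\epsilon$, for the finitely many $i\in\mathcal I$. For $\|w-u\|_{L^2}<\rho$, compare the two solutions up to the first exit from $O_\epsilon$. Writing $\psi$ for the perturbed path,
\[
\dot\psi-\dot\phi=\bigl[b^\circ(\psi)-b^\circ(\phi)\bigr]+\bigl[\sigma(\psi)-\sigma(\phi)\bigr]w+\sigma(\phi)(w-u).
\]
The first two terms are bounded by $M(1+|w_t|)|\psi_t-\phi_t|$, and the last term has $L^1$ norm at most $M\sqrt T\rho$. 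Gronwall's inequality then gives
\[
\|\psi-\phi\|_\infty\le C\rho\, e^{M(T+\sqrt T\|w\|_{L^2})}.
\]
So for small $\rho$ the perturbed path never leaves $O_\epsilon$ and exists on $[0,T]$. The map $w\mapsto\phi^w$ is therefore defined and Lipschitz on an $L^2$-ball around $u$. The schedule only enters through piecewise-constant time dependence, and Carathéodory theory handles that.

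\textbf{Step 2: the derivative formula.} For $w$ in the ball and a direction $v$, let $\zeta$ solve the linearized equation
\[
\dot\zeta=C^w_t\zeta+\sigma_{\lambda_t}(\phi^w_t)v_t,\qquad \zeta_0=0.
\]
Here $C^w$ is the coefficient $C$ from the statement, built along $(\phi^w,w)$. I then bound the Taylor remainder $R=\phi^{w+v}-\phi^w-\zeta$. It solves a linear equation driven by second-order terms, which are bounded by
\[
M\bigl(1+|w_t|\bigr)|\phi^{w+v}-\phi^w|^2+M|v_t|\,|\phi^{w+v}-\phi^w|.
\]
By the Lipschitz bound from Step 1 this forcing has $L^1$ norm $O(\|v\|_{L^2}^2)$, so Gronwall gives $\|R\|_\infty=O(\|v\|^2)$. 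Variation of constants with $J_{t,s}$ then yields the stated formula for the derivative at $w$.

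\textbf{Step 3: continuity of the derivative.} The maps $w\mapsto\phi^w$ in sup norm and $w\mapsto C^w$ in $L^1$ are continuous, using that $D\sigma$ is Lipschitz on $O_\epsilon$. Consequently $w\mapsto J^w$ is uniformly continuous and $w\mapsto J^w_{T,\cdot}\sigma(\phi^w_\cdot)$ is continuous in $L^2$. The map $w\mapsto D\mathcal E(w)\in\mathcal L(L^2,\R^d)$ is therefore continuous, which gives continuous Fréchet differentiability. This bookkeeping, in particular handling the unbounded $L^2$ factor $w_t$ inside $C^w$, is where the main care is needed. Combining $\int|w_t|\,dt\le\sqrt T\|w\|$ with Gronwall keeps everything uniform.

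\textbf{Step 4: the step-function claim.} The derivative $D\mathcal E(u)$ is a bounded operator $L^2\to\R^d$, so it is continuous. Suppose it is onto, and pick $g_1,\dots,g_d\in L^2$ with $D\mathcal E(u)g_j=\mathbf v_j$. Step functions are dense in $L^2$, so we can approximate each $g_j$ by a step function $\tilde g_j$. The matrix $[D\mathcal E(u)\tilde g_j]$ is then close to $I$, hence invertible, and in particular the $\tilde g_j$ are linearly independent. Apply Gram–Schmidt to the $\tilde g_j$ to get orthonormal step functions $v_1,\dots,v_d$ with the same span. Since $D\mathcal E(u)$ is injective on that span, its restriction to the span of the $v_j$ is an invertible map onto $\R^d$.
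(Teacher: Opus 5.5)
Your proof is correct, but it follows a more elementary, self-contained route than the paper.

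\emph{The paper's route.} It multiplies $b_i^\circ,\sigma_i$ by a cutoff equal to one near $\phi([0,T])$, which makes the fields globally Lipschitz and $C^2$ with bounded derivatives. It then absorbs the schedule into an augmented control $\widehat w$ with components $\one_{\{\lambda_t=i\}}$ and $\one_{\{\lambda_t=i\}}(w_t)_k$, so that the controlled equation becomes an autonomous control-affine system. After rescaling time, it quotes Scagliotti's results: Lipschitz dependence of trajectories on the control, and a locally Lipschitz G\^ateaux differential in operator norm, which gives continuous Fr\'echet differentiability. The trajectory Lipschitz bound then keeps nearby paths inside the region where the cutoff equals one. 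There, uniqueness identifies the localized and original endpoint maps, and the derivative formula follows from the linearized equation by variation of constants.

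\emph{Your route.} You never leave a compact $\epsilon$-neighborhood of $\phi([0,T])$, and the schedule enters only as piecewise-constant time dependence handled by Carath\'eodory theory. You prove each ingredient directly:
\begin{itemize}
\item Gronwall with the integrable coefficient $M(1+|w_t|)$ gives existence and local Lipschitz dependence.
\item The linear equation for the remainder $R$, with $L^1$ forcing of order $\|v\|_{L^2}^2$, gives a quadratic Taylor bound uniform on the ball.
\item $L^1$ continuity of $w\mapsto C^w$ gives operator-norm continuity of $w\mapsto D\mathcal E(w)$.
\end{itemize}

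\emph{Comparison.} Your version is longer but transparent. It shows exactly how the unbounded factor $w_t$ inside $C^w$ is absorbed, yields an explicit quadratic remainder estimate, and needs no reformulation to fit an external framework. The paper's version is shorter because it delegates precisely these estimates to the citation. Your Step~4 is the paper's argument, except that you use arbitrary step approximations where the paper uses dyadic averages; Gram--Schmidt then finishes both.

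\emph{One small adjustment.} In Step~1, run the comparison up to the first time $|\psi_t-\phi_t|$ reaches $\epsilon$, not the first exit from $O_\epsilon$. Since $O_\epsilon$ need not be convex, this is what guarantees that the segment from $\phi_t$ to $\psi_t$ lies in $B(\phi_t,\epsilon)\subset O_\epsilon$, so the mean-value bound with constant $M$ applies. The same reasoning covers the second-order Taylor bounds in Step~2 once $\phi^w$ and $\phi^{w+v}$ both stay within $\epsilon/2$ of $\phi$.
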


\begin{proof}[Proof of Lemma~\ref{lem:endpoint-directions}]
Put $K=\phi([0,T])$ and choose a bounded open set $V$ and a cutoff
$\chi$ with
\[
K\subset V,\qquad \overline V\subset O,\qquad
\chi\in C_c^\infty(O),\qquad \chi|_V=1.
\]
The zero extensions of
$\widetilde b_i^\circ=\chi b_i^\circ$ and
$\widetilde\sigma_i=\chi\sigma_i$ are globally Lipschitz and $C^2$,
with bounded derivatives through order two.
For $w\in L^2([0,T];\R^m)$, define
$\widehat w\in L^2([0,T];\R^{(m+1)|\mathcal I|})$ by
\[
\widehat w_{i,0}(t)=\one_{\{\lambda_t=i\}},\qquad
\widehat w_{i,k}(t)=\one_{\{\lambda_t=i\}}(w_t)_k,
\quad i\in\mathcal I,\quad 1\le k\le m.
\]
The localized path $\widetilde\phi^w$, started from $x$, solves
\[
\dot{\widetilde\phi}_t^w=\sum_{i\in\mathcal I}
\left[\widetilde b_i^\circ(\widetilde\phi_t^w)\widehat w_{i,0}(t)
+\sum_{k=1}^m\widetilde\sigma_i^{(k)}(\widetilde\phi_t^w)
\widehat w_{i,k}(t)\right].
\]
Apply \citet[Propositions~2.3--2.4, 2.7, and~2.11]{Scagliotti2023}
after rescaling $t=Ts$ and the control to $T\widehat w(Ts)$.
The locally Lipschitz G\^ateaux differential in operator norm is a
continuous Fr\'echet differential, also after composition with the
bounded affine map $w\mapsto\widehat w$.
Since $\chi=1$ near $K$, uniqueness gives $\widetilde\phi^u=\phi$.
For $z_t=\left.\partial_\varepsilon\widetilde\phi_t^{u+\varepsilon v}
\right|_{\varepsilon=0}$, differentiation gives
\[
\dot z_t=C_tz_t+\sigma_{\lambda_t}(\phi_t)v_t,\qquad z_0=0.
\]
The bound $\|C_t\|\le c(1+|u_t|)$, with $c<\infty$, makes $C_t$ integrable, so
variation of constants gives the derivative formula.
Proposition~2.3 gives, for sufficiently small $\|h\|_{L^2}$,
\[
\|\widetilde\phi^{u+h}-\phi\|_\infty
\le C\|h\|_{L^2}<\operatorname{dist}(K,V^c).
\]
The paths stay in $V$, where uniqueness identifies the localized
and original endpoint maps.

If $L=D_u\mathcal E_{x,T,\lambda}(u)$ is onto, choose $v_1,\ldots,v_d\in L^2$
with independent images $Lv_j$. On
$I_{n,r}=[rT2^{-n},(r+1)T2^{-n})$, including $T$ in the final
interval, define the dyadic step approximations
\[
v_j^{(n)}(t)=\sum_{r=0}^{2^n-1}\one_{I_{n,r}}(t)
\frac{2^n}{T}\int_{I_{n,r}}v_j(s)\,ds.
\]
Since $\|v_j^{(n)}-v_j\|_{L^2}\to0$ and $L$ is bounded,
\[
\det[Lv_1^{(n)}\ \cdots\ Lv_d^{(n)}]
\longrightarrow\det[Lv_1\ \cdots\ Lv_d]\ne0.
\]
For large $n$, Gram--Schmidt gives the required orthonormal step
functions without changing their span.
\end{proof}

The next lemma gives an image ball and an upper Jacobian bound
for a map close to an invertible linear map. Fix $\xi_0\in\R^d$,
$\rho>0$, and
\[
I=B(\xi_0,\rho)=\{\xi\in\R^d:|\xi-\xi_0|<\rho\},\qquad
\overline I=\{\xi\in\R^d:|\xi-\xi_0|\le\rho\}.
\]
Let $A\in\R^{d\times d}$ be invertible, and fix
$0<s\le\|A^{-1}\|^{-1}$ and $0\le\kappa<1$.

\begin{lem}[Quantitative inverse function theorem]
\label{lem:uniform-finite-dimensional-inverse}
Let $U\subset\R^d$ be open with $\overline I\subset U$, and let
$F\in C^1(U;\R^d)$. Assume
\[
\sup_{\xi\in\overline I}\|A^{-1}(DF(\xi)-A)\|\le\kappa.
\]
Then $F|_I$ is a $C^1$ diffeomorphism onto its image, and
\begin{equation}\label{eq:quantitative-inverse}
\begin{gathered}
|F(\xi)-F(\eta)|\ge s(1-\kappa)|\xi-\eta|,
\qquad \xi,\eta\in\overline I,\\
B(F(\xi_0),s(1-\kappa)\rho)\subset F(I),\qquad
|F^{-1}(z)-\xi_0|\le\frac{|z-F(\xi_0)|}{s(1-\kappa)},\quad z\in F(I),\\
\sup_{\xi\in I}|\det DF(\xi)|
\le\bigl(\sup_{\xi\in I}\|DF(\xi)\|\bigr)^d
\le((1+\kappa)\|A\|)^d.
\end{gathered}
\end{equation}
In particular, if $\kappa\le1/2$ and $|F(\xi_0)-y|<s\rho/4$,
then $B(y,s\rho/4)\subset F(I)$.
\end{lem}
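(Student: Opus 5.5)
The argument is the standard Newton–Kantorovich scheme: compare $F$ with the linear map $A$ on the ball $\overline I$, using the hypothesis $\|A^{-1}(DF-A)\|\le\kappa$ on $\overline I$. Throughout, $|Aw|\ge s|w|$ because $s\le\|A^{-1}\|^{-1}$.

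\emph{Lower Lipschitz bound.} For $\xi,\eta\in\overline I$, convexity of $\overline I$ and the fundamental theorem of calculus give
\[
F(\xi)-F(\eta)=\int_0^1DF(\eta+t(\xi-\eta))(\xi-\eta)\,dt .
\]
Write this as $A(\xi-\eta)+A\int_0^1A^{-1}(DF-A)(\xi-\eta)\,dt$. Then
\[
|A^{-1}(F(\xi)-F(\eta))|\ge(1-\kappa)|\xi-\eta| .
\]
Applying $|Aw|\ge s|w|$ gives $|F(\xi)-F(\eta)|\ge s(1-\kappa)|\xi-\eta|$. So $F$ is injective on $\overline I$.

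\emph{Diffeomorphism.} Each $DF(\xi)=A(I+A^{-1}(DF(\xi)-A))$ is invertible by the Neumann series, since $\kappa<1$. The classical inverse function theorem then makes $F|_I$ an open map with a $C^1$ local inverse near every point. Combined with injectivity, $F|_I$ is a $C^1$ diffeomorphism onto the open set $F(I)$.

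\emph{Image ball.} This is the main step. Fix $z$ with $|z-F(\xi_0)|<s(1-\kappa)\rho$ and define
\[
\Psi(\xi)=\xi-A^{-1}(F(\xi)-z).
\]
Its derivative is $-A^{-1}(DF-A)$, so $\Psi$ is a $\kappa$-contraction on $\overline I$. It also satisfies
\[
|\Psi(\xi)-\xi_0|\le|\Psi(\xi)-\Psi(\xi_0)|+|A^{-1}(F(\xi_0)-z)|\le\kappa\rho+\|A^{-1}\|\,|z-F(\xi_0)| .
\]
The estimate closes only if $\|A^{-1}\|\,|z-F(\xi_0)|<(1-\kappa)\rho$. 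From $s\le\|A^{-1}\|^{-1}$ we have $\|A^{-1}\|\le 1/s$, so this holds whenever $|z-F(\xi_0)|<s(1-\kappa)\rho$. Hence $\Psi$ maps $\overline I$ into $I$. Banach's fixed point theorem gives $\xi\in I$ with $F(\xi)=z$. The bound on $|F^{-1}(z)-\xi_0|$ is the lower Lipschitz estimate applied to the pair $(F^{-1}(z),\xi_0)$.

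\emph{Jacobian bound.} Hadamard's inequality, or the product of singular values, gives $|\det DF|\le\|DF\|^d$. Writing $DF=A(I+A^{-1}(DF-A))$ gives $\|DF\|\le(1+\kappa)\|A\|$.

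\emph{Final claim.} Take $\kappa\le1/2$ and $|F(\xi_0)-y|<s\rho/4$. For $w\in B(y,s\rho/4)$,
\[
|w-F(\xi_0)|<s\rho/2\le s(1-\kappa)\rho ,
\]
so $w\in F(I)$ by the image-ball step.
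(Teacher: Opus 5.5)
Your proof is correct and follows essentially the paper's route. The paper normalizes to $\psi(\xi)=A^{-1}(F(\xi)-F(\xi_0))$ and cites Salamon's contraction-mapping inverse function theorem for the lower Lipschitz bound and the image ball; it then transfers these through $|Az|\ge s|z|$ and bounds the determinant by Hadamard's inequality, as you do. The only difference is that you run the contraction argument yourself with $\Psi(\xi)=\xi-A^{-1}(F(\xi)-z)$ on $\overline I$, which makes the proof self-contained and avoids the paper's limiting step $\alpha\downarrow\kappa$ and its continuity extension to $\overline I$.
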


\begin{proof}
For $\psi(\xi)=A^{-1}(F(\xi)-F(\xi_0))$, we have
$\psi(\xi_0)=0$ and $\|D\psi-I\|\le\kappa$.
Apply \citet[Appendix~C, Theorem~C.1 and estimate~(C.4)]{Salamon2016Measure}
with any contraction bound $\alpha\in(\kappa,1)$ and let
$\alpha\downarrow\kappa$. This gives a $C^1$ inverse and
\[
B(0,(1-\kappa)\rho)\subset\psi(I),\qquad
|\psi(\xi)-\psi(\eta)|\ge(1-\kappa)|\xi-\eta|.
\]
Continuity extends the second bound to $\overline I$; $|Az|\ge s|z|$
gives the first two lines of \eqref{eq:quantitative-inverse}.
Hadamard's inequality gives
\[
|\det DF|\le\|DF\|^d
\le\bigl(\|A\|(1+\|A^{-1}(DF-A)\|)\bigr)^d
\le((1+\kappa)\|A\|)^d.
\]
For $\kappa\le1/2$, the center condition gives
$B(y,s\rho/4)\subset B(F(\xi_0),s\rho/2)\subset F(I)$.
\end{proof}

\subsection{Cameron--Martin averaging}\label{app:averaging}
We state the averaging criterion independently of any SDE. It combines
finite-dimensional Wiener shifts with change of variables. Related
image-measure lower bounds appear in
\citet[Proposition~3.2]{Shirikyan2017}.

Fix $T>0$. Use the Wiener space, Cameron--Martin space $\mathcal H$, directions
$e_j$, and shifts $h_\xi$ defined in
Section~\ref{subsec:local-minorization}. Let $\alpha$ index a Borel
subset of a Euclidean space, and let $Q\subset\R^d$ be a closed ball
of positive radius with an open neighborhood $U_Q\supset Q\cup\{0\}$.
Consider a path-valued family
$(\alpha,\xi,w)\mapsto Y^\alpha(\xi;w)\in C([0,T];\R^d)$,
$\xi\in U_Q$, and prescribed path sets
$\mathcal T_\alpha\subset C([0,T];\R^d)$. Define
\[
\begin{aligned}
G_{\alpha,w}(\xi)&=Y_T^\alpha(\xi;w),\\
\Pi_\alpha^{\rm tube}(D)
&=\PP\{Y_T^\alpha(0;W)\in D,\,
Y^\alpha(0;W)\in\mathcal T_\alpha\}.
\end{aligned}
\]
For Theorem~\ref{prop:local}, this is the solution family of
\eqref{eq:shifted-sde}, with $\alpha=(x,\tau)$.
Choose a bounded open ball $I$ with $\overline I\subset Q$, an open
ball $O\subset\R^d$, and constants $q>0$ and $0<J<\infty$.

\Needspace{6\baselineskip}
\begin{lem}[Cameron--Martin averaging]\label{lem:averaging}
Assume the following conditions.
\begin{enumerate}[label=\textup{(\roman*)},leftmargin=*]
\item The derivatives $\dot e_j$ are step functions and the paths are
orthonormal in $\mathcal H$:
\[
\langle e_j,e_k\rangle_{\mathcal H}
=\int_0^T\dot e_j(t)\cdot\dot e_k(t)\,dt=\delta_{jk},
\qquad 1\le j,k\le d.
\]
\item The maps $(\alpha,\xi,w)\mapsto Y^\alpha(\xi;w)$ and
$(\alpha,y)\mapsto\one_{\mathcal T_\alpha}(y)$ are jointly Borel.
For each fixed $\alpha$ and $\xi\in Q$,
\[
Y^\alpha(\xi;W)=Y^\alpha(0;W+h_\xi)
\quad\PP\text{-almost surely as paths}.
\]
\item For each $\alpha$, there is an event $E_\alpha$ with
$\PP(E_\alpha)\ge q$ on which every $Y^\alpha(\xi;W)$, $\xi\in I$,
belongs to $\mathcal T_\alpha$, and $G_{\alpha,W}|_I$ is a $C^1$
diffeomorphism onto its image satisfying
\[
O\subset G_{\alpha,W}(I),\qquad
\sup_{\xi\in I}|\det D_\xi G_{\alpha,W}(\xi)|\le J.
\]
\end{enumerate}
Then there is a common $c>0$, depending only on $q,I,J$, such that
\[
\Pi_\alpha^{\rm tube}(D)\ge c\Leb(D\cap O),
\qquad D\in\B(\R^d).
\]
\end{lem}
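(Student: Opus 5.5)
We prove Lemma~\ref{lem:averaging} by averaging over the $d$ Cameron--Martin directions and then changing variables $\xi\mapsto G_{\alpha,W}(\xi)$ on $I$.

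\emph{Step 1: rewrite $\Pi_\alpha^{\rm tube}$ with a shifted Wiener measure.} Fix $\alpha$ and a Borel set $D$. For $\xi\in Q$, the Cameron--Martin theorem says that the law of $W+h_\xi$ has density
\[
\exp\bigl(\langle h_\xi,W\rangle-\tfrac12|\xi|^2\bigr)
\]
with respect to $\PP$. Here $\langle h_\xi,W\rangle=\sum_j\xi_j\int_0^T\dot e_j\cdot dW$, and $|h_\xi|_{\mathcal H}=|\xi|$ by orthonormality in~(i). Combining this with~(ii), we get
\[
\Pi_\alpha^{\rm tube}(D)=\E\Bigl[\one_{\{G_{\alpha,W}(\xi)\in D,\ Y^\alpha(\xi;W)\in\mathcal T_\alpha\}}\,
\exp\bigl(-\langle h_\xi,W\rangle-\tfrac12|\xi|^2\bigr)\Bigr]
\]
for every $\xi\in I$. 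Let $g=(\int_0^T\dot e_j\cdot dW)_j$; it is standard Gaussian on $\R^d$. The weight above equals $\exp(-\xi\cdot g-\tfrac12|\xi|^2)$, up to the correct sign convention, which only matters for the direction of the shift.

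\emph{Step 2: average over $\xi\in I$ and discard the weight.} Integrate the identity over $\xi\in I$ and divide by $\Leb(I)$. Joint measurability from~(ii) lets us use Tonelli to exchange $\E$ and $d\xi$. Restrict to the event $E_\alpha\cap\{|g|\le M\}$. Choose $M$ depending only on $q$ so that $\PP(|g|>M)\le q/2$; this is possible because $g$ has a fixed law. On this event, the weight is at least $c_1=\exp(-M r_I-\tfrac12 r_I^2)$, where $r_I=\sup_{\xi\in I}|\xi|$, which depends only on $I$. Moreover, the tube indicator equals one on $E_\alpha$ by~(iii). Hence
\[
\Pi_\alpha^{\rm tube}(D)\ge\frac{c_1}{\Leb(I)}\,\E\Bigl[\one_{E_\alpha\cap\{|g|\le M\}}\int_I\one_D(G_{\alpha,W}(\xi))\,d\xi\Bigr].
\]

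\emph{Step 3: change variables.} On $E_\alpha$, the map $G=G_{\alpha,W}|_I$ is a $C^1$ diffeomorphism onto its image, and this image contains $O$. The change of variables formula gives
\[
\int_I\one_D(G(\xi))\,d\xi=\int_{G(I)}\one_D(y)\,|\det DG(G^{-1}y)|^{-1}\,dy\ge J^{-1}\Leb(D\cap O).
\]
Since $\PP(E_\alpha\cap\{|g|\le M\})\ge q/2$, we conclude with
\[
c=\frac{c_1q}{2J\Leb(I)},
\]
which depends only on $q$, $I$ and $J$.

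\emph{Main obstacle.} The delicate point is Step~1. The almost-sure identity in~(ii) holds for each fixed $\xi$, so it must be combined with the Girsanov/Cameron--Martin formula in a way that keeps measurability jointly in $(\xi,w)$, so that Tonelli in Step~2 is legitimate. The sign and exact form of the exponential density also need care. A second subtlety is that~(iii) is stated for the unshifted $W$ but is used after the shift. We handle this by writing the shifted expectation back as an expectation in $W$ carrying the density, so that the event $E_\alpha$ and the map $G_{\alpha,W}$ always refer to the same, unshifted $W$. This is what makes the integral over $\xi$ pointwise for fixed $W$, so that the change of variables applies.
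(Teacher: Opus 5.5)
Your proof is correct and is essentially the paper's argument. It uses the same ingredients: a Cameron--Martin shift in the $d$ orthonormal step directions, truncation of the standard Gaussian vector $g=Z(W)$ at a radius $M$ depending on $q$ and $d$, Tonelli over $\xi\in I$, and the change of variables on $E_\alpha$. Up to the choice of $M$, it gives the same constant $c=q/(2J\Leb(I)e^{R_IM+R_I^2/2})$.

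The one difference is the direction in which the Cameron--Martin density is applied, and it is cosmetic. You write $\Pi_\alpha^{\rm tube}(D)=\E[f_\alpha(W+h_\xi)e^{-\xi\cdot Z(W)-|\xi|^2/2}]$, which already has the correct sign, and bound the weight from below on $\{|Z(W)|\le M\}$. The paper instead bounds $\E[\one_{E'_\alpha}(W)f_\alpha(W+h_\xi)]$ from above by a multiple of $\Pi_\alpha^{\rm tube}(D)$, which requires it to track the shifted event $\{W-h_\xi\in E'_\alpha\}$.
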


\begin{proof}
The finite-dimensional shift method appears in
\citet[Lemma~4.5 and Appendix~A]{HerzogMattingly2015}.
By (i), refining the finitely many step-function partitions gives
$0=t_0<\cdots<t_N=T$, with $\dot e_j=a_{j,r}\in\R^m$ almost everywhere
on $[t_{r-1},t_r)$. Define the continuous linear functionals
\[
Z_j(w)=\sum_{r=1}^N a_{j,r}\cdot(w(t_r)-w(t_{r-1})),\qquad
Z_j(W)=\int_0^T\dot e_j(t)\cdot dW_t.
\]
The vector $Z(W)=(Z_1(W),\ldots,Z_d(W))$ is centered Gaussian with
covariance $(\langle e_j,e_k\rangle_{\mathcal H})=I_d$, hence standard
Gaussian. Moreover, for every $w$ and $\xi$,
\[
Z(w+h_\xi)=Z(w)+\xi.
\]

Fix $\alpha$ and $D\in\B(\R^d)$.
Choose a Borel representative of $E_\alpha$ on which (iii) holds. Set $M=\sqrt{4d/q}$ and
put $E'_\alpha=\{w\in E_\alpha:|Z(w)|\le M\}$.
Markov's inequality gives $\PP(E'_\alpha)\ge q/2$.
For $w\in E'_\alpha$, write $G=G_{\alpha,w}$. Change of variables
under (iii) gives
\[
\begin{aligned}
\int_I\one_D(G(\xi))\,d\xi
&=\int_{G(I)}
\frac{\one_D(y)}{|\det D_\xi G(G^{-1}(y))|}\,dy\\
&\ge J^{-1}\Leb(D\cap O).
\end{aligned}
\]
Integrating this bound by Tonelli's theorem, using (ii), gives
\[
\begin{aligned}
K_\alpha(D)
&:=\int_I\E[\one_{E'_\alpha}(W)
             \one_D(G_{\alpha,W}(\xi))]\,d\xi\\
&\ge\frac{\PP(E'_\alpha)}{J}\Leb(D\cap O)
 \ge\frac{q}{2J}\Leb(D\cap O).
\end{aligned}
\]
By (ii), the function
\[
f_\alpha(w)=\one_D(Y_T^\alpha(0;w))
\one_{\mathcal T_\alpha}(Y^\alpha(0;w))
\]
is Borel and satisfies $\E f_\alpha(W)=\Pi_\alpha^{\rm tube}(D)$.
For each fixed $\xi$, (ii) identifies the shifted paths almost surely,
and (iii) supplies the path restriction on $E'_\alpha$. Tonelli's
theorem integrates this jointly Borel equality over $I$, giving
\[
K_\alpha(D)=\int_I\E[\one_{E'_\alpha}(W)
f_\alpha(W+h_\xi)]\,d\xi.
\]

For fixed $\xi$, (i) gives $\|h_\xi\|_{\mathcal H}^2=|\xi|^2$ and
$\int_0^T\dot h_\xi\cdot dW=\xi\cdot Z(W)$. Apply the Cameron--Martin
formula in \citet[Theorem~3.41 and (3.13)]{Hairer2009SPDE}
to $w\mapsto\one_{E'_\alpha}(w-h_\xi)f_\alpha(w)$. With
$R_I=\sup_{\xi\in I}|\xi|$, it gives
\[
\begin{aligned}
\E[\one_{E'_\alpha}(W)f_\alpha(W+h_\xi)]
&=\E[\one_{E'_\alpha}(W-h_\xi)f_\alpha(W)
e^{\xi\cdot Z(W)-|\xi|^2/2}]\\
&\le e^{R_IM+R_I^2/2}\Pi_\alpha^{\rm tube}(D).
\end{aligned}
\]
Here $Z(W-h_\xi)=Z(W)-\xi$ gives $|Z(W)-\xi|\le M$ on the
indicated event, and
$\xi\cdot Z(W)-|\xi|^2/2
=\xi\cdot(Z(W)-\xi)+|\xi|^2/2\le R_IM+R_I^2/2$.
Integrating the resulting bound over $I$ yields
\[
K_\alpha(D)\le\Leb(I)e^{R_IM+R_I^2/2}
\Pi_\alpha^{\rm tube}(D).
\]
Combining the bounds for $K_\alpha(D)$ proves the claim with
\[
c=\frac{q}{2J\Leb(I)e^{R_IM+R_I^2/2}}>0.
\]
\end{proof}

\subsection{Support of the shifted family}\label{app:family}
The following lemma supplies the family support and parameter
continuity used in Theorem~\ref{prop:local}. Both paths and their
control-parameter derivatives are approximated on one event of
positive probability.
Fix $T>0$ and a regime sequence
$\boldsymbol\iota=(\iota_0,\ldots,\iota_n)\in\Sset^{n+1}$.
Use the spaces $\mathcal H,C_0$, the directions $e_j$, and the
sets $Q,U_Q$ from Section~\ref{subsec:local-minorization}, together
with $a,h_\xi$ and the norm \eqref{eq:family-norm}.
For $\alpha=(x,\tau)\in\R^d\times\Delta_n(T)$, the initial point is
$x$ and the prescribed switching schedule is $\lambda^{\boldsymbol\iota,\tau}$.
For a sequence $\alpha_j=(x_j,\tau^{(j)})$, write
$\tau^{(j)}=(\tau_1^{(j)},\ldots,\tau_n^{(j)})$ for its $j$th
switching-time vector.

Fix $\alpha_0=(x_0,\tau^0)$ and $u_0\in L^2([0,T];\R^m)$.
The deterministic reference family solves
\[
\begin{aligned}
\dot\phi_t^0(\xi)
&=b_{\lambda_t^{\boldsymbol\iota,\tau^0}}^\circ(\phi_t^0(\xi))
 +\sigma_{\lambda_t^{\boldsymbol\iota,\tau^0}}(\phi_t^0(\xi))
   [u_0(t)+a(t)\xi],\\
\phi_0^0(\xi)&=x_0,\qquad \xi\in U_Q.
\end{aligned}
\]

\begin{lem}[Regularity and support of the shifted family]
\label{lem:classical-family}
Assume that $\dot e_1,\ldots,\dot e_d$ are step functions and,
for every regime $i$ in the fixed sequence $\boldsymbol\iota$,
\[
b_i\in C_c^{d+3}(\R^d;\R^d),\qquad
\sigma_i\in C_c^{d+4}(\R^d;\R^{d\times m}).
\]
Then the solutions of \eqref{eq:shifted-sde} have a version
with the following properties.
\begin{enumerate}[label=\textup{(\roman*)},leftmargin=*]
\item 
The map
$(\alpha,\xi,w)\longmapsto Y^\alpha(\xi;w)$
from
$(\R^d\times\Delta_n(T))\times U_Q\times C_0([0,T];\R^m)$
to $C([0,T];\R^d)$
is jointly Borel measurable. There is a Borel set $\Omega_*$ of
Wiener measure one such that, for every $w\in\Omega_*$ and every
$\alpha$, the family is $C^1$ in $\xi$ and both
$(t,\xi)\mapsto Y_t^\alpha(\xi;w)$ and
$(t,\xi)\mapsto D_\xi Y_t^\alpha(\xi;w)$ are continuous.
\item
For every $\alpha_0$, every $u_0\in L^2([0,T];\R^m)$, and every
$\varepsilon>0$,
\begin{equation}\label{eq:family-support}
\PP\{\|Y^{\alpha_0}-\phi^0\|_{Q,1}<\varepsilon\}>0.
\end{equation}
\item
Let $\alpha_j=(x_j,\tau^{(j)})$ and $\alpha_0=(x_0,\tau^0)$ belong
to $\R^d\times\Delta_n(T)$, with
$x_j\to x_0$ and $\tau^{(j)}\to\tau^0$.
For every $w\in\Omega_*$,
\begin{equation}\label{eq:schedule-stability}
\|Y^{\alpha_j}-Y^{\alpha_0}\|_{Q,1}\longrightarrow0.
\end{equation}
All these solutions use the same regime sequence $\boldsymbol\iota$.
\item 
For each fixed $\alpha$ and $\xi\in Q$,
$Y^\alpha(\xi;W)=Y^\alpha(0;W+h_\xi)$ almost surely as paths.
\end{enumerate}
\end{lem}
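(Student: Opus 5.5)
The plan is to reduce Lemma~\ref{lem:classical-family} to classical results on stochastic flows by rewriting the switching family as a single Stratonovich SDE without switching. The device is the same one used in the proof of Lemma~\ref{lem:endpoint-directions}. Augment time and the parameter: consider the state $(Y,\xi)$ or, better, the pair $(Y,D_\xi Y)$ in $\R^{2d}$ (more precisely $\R^d\times\R^{d\times d}$, handled column by column). The explicit time dependence through $\lambda^{\boldsymbol\iota,\tau}_t$ and the step function $a(t)$ is piecewise constant. On each interval of the common refinement of $\{\tau_r\}$ and the step partition of the $\dot e_j$, the equation is therefore an autonomous Stratonovich SDE with $C_c^{d+3}$ drift and $C_c^{d+4}$ diffusion. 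Its derivative equation is again an SDE with one fewer derivative and linear growth in the derivative variable. The solution on $[0,T]$ is the composition of finitely many such flows.

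For (i), apply \citet{Kunita1986Flows} on each interval. This gives a version of the flow that is $C^{k}$ in the initial point, with continuous derivatives in $(t,x)$ outside a null set. I will treat $\xi$ as an extra frozen coordinate, with $d\xi=0$, so the flow is also $C^1$ in $\xi$. Since the coefficients are compactly supported and smooth, the full-measure set can be chosen independent of the initial point. Composing finitely many flows gives joint Borel measurability in $(x,\xi,w)$ for each fixed $\tau$.

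Measurability and regularity in $\tau$ need a separate argument, because different $\tau$ give different partitions. I will write $Y^\alpha$ as the composition $\Phi^{\iota_n}_{\tau_n,T}\circ\cdots\circ\Phi^{\iota_0}_{0,\tau_1}(x)$. Here $\Phi^{i}_{s,t}$ is the two-parameter flow of regime $i$ (with $a$ inserted), defined for all $s\le t$ simultaneously on one full-measure set $\Omega_*$. Kunita gives joint continuity of $(s,t,x,\xi)\mapsto\Phi^i_{s,t}$ together with its first spatial derivatives. This representation yields (i) with a common $\Omega_*$. It also yields (iii) directly: as $\tau^{(j)}\to\tau^0$ and $x_j\to x_0$, the composition and its $\xi$-derivative converge uniformly in $(t,\xi)\in[0,T]\times Q$ by joint continuity on compacts.

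Item (iv) is Girsanov/Cameron--Martin at the level of a fixed $\xi$. Replacing $W$ by $W+h_\xi$ adds $\sigma\,a(t)\xi\,dt$ to the equation, so pathwise uniqueness of the strong solution gives $Y^\alpha(\xi;W)=Y^\alpha(0;W+h_\xi)$ almost surely. The shifted process is absolutely continuous with respect to Wiener measure, so null sets are preserved.

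The main obstacle is (ii), support in the $C^1$ family norm $\|\cdot\|_{Q,1}$. Here I would invoke the flow support theorem of \citet{Aida1990FlowSupport}. It states that the law of a stochastic flow, in the topology of $C^1$ uniform convergence of the flow and its derivative on compacts, has support containing the flow of the controlled ODE with control $\dot h$ for each $h\in\mathcal H$. Take $h(t)=\int_0^tu_0$, and apply the theorem on the augmented state $(y,\xi)\in\R^d\times U_Q$ on each interval with constant coefficients. Then $\Phi$ is close in $C^1$ on a compact set containing the reference path, uniformly for $\xi\in Q$.

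The regularity counts $C^{d+3}$ and $C^{d+4}$ are exactly what is needed to apply the $C^1$ support result to the augmented $2d$-dimensional system, via Sobolev embedding in the flow topology. Piecewise-constant time dependence is handled by applying the theorem on the product of the finitely many interval flows. Their driving increments are independent, so positive probability on each factor multiplies. Continuity of composition then transfers closeness to $\|Y^{\alpha_0}-\phi^0\|_{Q,1}<\varepsilon$. The Stratonovich form is essential here: the limiting controlled equation is the one with drift $b^\circ$, which is exactly the reference equation.
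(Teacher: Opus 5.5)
Your proposal follows essentially the same route as the paper. You adjoin $\xi$ as a frozen coordinate and build Kunita flows for each frozen regime and step interval. Composing them over the switching and step partition gives (i) and (iii). Aida's flow support theorem, applied interval by interval with independent increments, gives (ii), and the Cameron--Martin shift with pathwise uniqueness gives (iv).

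The paper fills in several details your sketch leaves implicit:
\begin{enumerate}
\item a cutoff $\chi_Q(\xi)$, which makes the augmented fields compactly supported and globally Lipschitz;
\item periodization onto a flat torus, since Aida's Theorem~3 is stated on a compact manifold rather than on $\R^d\times U_Q$, with the torus flow identified back with the original one on the chart;
\item a Gronwall argument extending support from smooth controls to $L^2$ controls;
\item a Riemann-sum argument justifying the shifted stochastic integral in (iv).
\end{enumerate}
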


\begin{proof}
We obtain regularity and continuity from frozen flows, then prove
support and identify the shifted It\^o equation.

\emph{1. Construct the augmented frozen flows and their derivatives.}
Adjoin $\xi$ as a constant state coordinate so that spatial flow
derivatives also give derivatives in $\xi$.
Refining the finitely many step-function partitions gives
$0=t_0<\cdots<t_M=T$, with
$a(t)=a_\ell\in\R^{m\times d}$ almost everywhere on
$[t_{\ell-1},t_\ell)$.
Choose $\chi_Q\in C_c^\infty(\R^d)$ equal to one near
$\overline U_Q$.
For $z=(x,\xi)\in\R^{2d}$, each visited regime $i$, and
$1\le\ell\le M$, define
\begin{equation}\label{eq:augmented-fields}
\begin{aligned}
V_{i,\ell,0}(z)
 &=\chi_Q(\xi)\bigl(b_i^\circ(x)+\sigma_i(x)a_\ell\xi,0\bigr),\\
V_{i,\ell,k}(z)&=\chi_Q(\xi)\bigl(\sigma_i^{(k)}(x),0\bigr),
\qquad 1\le k\le m.
\end{aligned}
\end{equation}
Their It\^o drift is
$\widehat b_{i,\ell}=V_{i,\ell,0}
+\frac12\sum_kDV_{i,\ell,k}V_{i,\ell,k}$.
On $\R^d\times U_Q$, it equals
$(b_i(x)+\sigma_i(x)a_\ell\xi,0)$ by \eqref{eq:corrected}.
Define $\widehat\Psi^{i,\ell}_{s,t}(z)=\widehat Z_t$ by the
augmented It\^o equation
\[
d\widehat Z_t=\widehat b_{i,\ell}(\widehat Z_t)\,dt
 +\sum_{k=1}^m V_{i,\ell,k}(\widehat Z_t)\,dW_t^k,
\qquad \widehat Z_s=z.
\]
Its covariance characteristic is
$\widehat a_{i,\ell}(z,z')
=\sum_k V_{i,\ell,k}(z)V_{i,\ell,k}(z')^\top$.
Suppress $(i,\ell)$ in the following bounds.
The coefficient assumptions and the parameter cutoff give
\[
\begin{gathered}
\max_{|\nu|\le2}\sup_z\|D^\nu\widehat b(z)\|
 +\max_{|\nu|,|\mu|\le2}\sup_{z,z'}
    \|D_z^\nu D_{z'}^\mu\widehat a(z,z')\|<\infty,\\
\sum_k|V_k(z)-V_k(z')|^2\le c|z-z'|^2.
\end{gathered}
\]
The fields have compact support in both coordinates and are globally
Lipschitz and bounded; the displayed characteristic derivatives are
also Lipschitz because the next derivatives are bounded.
The SDE construction in
\citet[Theorem~2.4.3 and Corollary~2.4.5]{Kunita1986Flows}
gives a Brownian flow with these local characteristics.
For fixed $s,t$, its interval maps have representatives measurable
with respect to the Brownian increments on $[s,t]$.
Boundedness of the fields gives
$|\E[\widehat\Psi_{s,t}(z)-z]|+
\E|\widehat\Psi_{s,t}(z)-z|^2\le c_T(t-s)$.
Together with the displayed bounds, this verifies the hypotheses of
\citet[Theorem~1.4.1, with $k=2$]{Kunita1986Flows}, which gives
a flow of $C^1$ diffeomorphisms.

For each compact $K\subset\R^{2d}$ and integer $p\ge1$,
\citet[Lemmas~1.3.6 and~1.4.2]{Kunita1986Flows} give
\begin{equation}\label{eq:flow-moment-estimate}
\begin{aligned}
&\E|\widehat\Psi_{s,t}(z)-\widehat\Psi_{s',t'}(z')|^{2p}
 +\E\|D_z\widehat\Psi_{s,t}(z)
            -D_z\widehat\Psi_{s',t'}(z')\|^{2p}\\
&\qquad\le c_{K,p}
 \bigl(|z-z'|^{2p}+|s-s'|^p+|t-t'|^p\bigr),
\end{aligned}
\end{equation}
for $z,z'\in K$ and $0\le s\le t\le T$, $0\le s'\le t'\le T$.
For the derivative term, let the spatial difference-quotient
offsets in Lemma~1.4.2 tend to zero and apply Fatou's lemma.

The parameter $(s,t,z)$ has $2d+2$ real coordinates.
Extend the flows by the identity when $t<s$; the Lipschitz
projection $(s,t)\mapsto(s,\max\{s,t\})$ preserves the moment bound
on $[0,T]^2$. Taking $p>2d+2$, the Kolmogorov continuity theorem
in \citet[Theorem~1.3.7]{Kunita1986Flows}, applied to
\eqref{eq:flow-moment-estimate}, gives jointly continuous versions
of the flows and their derivatives.
Choosing Borel representatives on a countable dense parameter set
and using countably many compact boxes gives one Borel event
$\Omega_*$ of Wiener measure one, common to all $(i,\ell)$, on
which these continuity and flow properties hold simultaneously.
Set the flows to the identity off $\Omega_*$. On $\Omega_*$,
\[
\widehat\Psi^{i,\ell}_{s,s}(z)=z,\qquad
D_z\widehat\Psi^{i,\ell}_{s,s}(z)=I_{2d}.
\]

\emph{2. Recover the shifted family and prove \textup{(i)}, \textup{(iii)}.}
For $t>0$, refine the switching partition by the fixed mesh:
\[
\mathcal P(\tau,t)
=\{0,t\}\cup
 \bigl((\{\tau_1,\ldots,\tau_n\}\cup\{t_1,\ldots,t_M\})\cap(0,t)\bigr)
=\{0=s_0<\cdots<s_L=t\}.
\]
On $(s_{j-1},s_j)$ the pair
$(\lambda_r^{\boldsymbol\iota,\tau},a(r))=(i_j,a_{\ell_j})$ is constant
almost everywhere.
The frozen-flow representation of the shifted family is
\begin{equation}\label{eq:flow-composition}
\begin{aligned}
Y_t^\alpha(\xi)
&=\operatorname{pr}_x\mathcal F_t^\tau(x,\xi),\\
\mathcal F_t^\tau
&=\widehat\Psi^{i_L,\ell_L}_{s_{L-1},s_L}\circ\cdots\circ
 \widehat\Psi^{i_1,\ell_1}_{s_0,s_1},
\end{aligned}
\end{equation}
where $\operatorname{pr}_x(x,\xi)=x$ and $\mathcal F_0^\tau=\mathrm{id}$.
The second coordinate of each frozen flow stays equal to $\xi$,
and $\chi_Q(\xi)=1$ on $U_Q$; hence this representation solves
\eqref{eq:shifted-sde}. Its derivative is
\[
D_\xi Y_t^\alpha(\xi)
=\operatorname{pr}_xD_z\mathcal F_t^\tau(x,\xi)
 \begin{pmatrix}0_{d\times d}\\I_d\end{pmatrix}.
\]

Fix $w\in\Omega_*$. Step~1 gives $C^1$ frozen flows jointly continuous
in their interval endpoints. On each of the finitely many orderings
of $\mathcal P(\tau,t)$, the composition and its derivative are
continuous in $(t,x,\tau,\xi)$. When partition points coalesce,
\[
\sup_{z\in K}\bigl(
|\widehat\Psi^{i,\ell}_{u,v}(z)-z|
+\|D_z\widehat\Psi^{i,\ell}_{u,v}(z)-I_{2d}\|\bigr)\to 0
\quad\text{as }(u,v)\to(s,s)
\]
for every compact $K$. Deleting these identity factors gives the same
boundary values for the adjoining orderings. Thus the target and
its $\xi$ derivative are jointly continuous.
On a compact parameter neighborhood the intermediate images of the
finite compositions lie in compact sets. Uniform continuity gives
\[
\begin{aligned}
\|Y^{\alpha_j}-Y^{\alpha_0}\|_{Q,1}
&\le\sup_{\substack{t\le T\\\xi\in Q}}
 |\mathcal F_t^{\tau^{(j)}}(x_j,\xi)
             -\mathcal F_t^{\tau^0}(x_0,\xi)|\\
&\quad+\sup_{\substack{t\le T\\\xi\in Q}}
 \|D_z\mathcal F_t^{\tau^{(j)}}(x_j,\xi)
             -D_z\mathcal F_t^{\tau^0}(x_0,\xi)\|
 \to 0,
\end{aligned}
\]
which proves \eqref{eq:schedule-stability}.

For joint measurability, take measurable flow and derivative versions
on a countable dense set $\mathcal D_0$ of triples $(s,t,z)$.
Choose Borel approximations $\gamma_N(s,t,z)\in\mathcal D_0$ converging to
$(s,t,z)$, and define
\[
(\widehat\Psi,D_z\widehat\Psi)(s,t,z;w)=
\begin{cases}
\displaystyle\lim_{N\to\infty}
 (\widehat\Psi,D_z\widehat\Psi)(\gamma_N(s,t,z);w),&w\in\Omega_*,\\
(z,I_{2d}),&w\notin\Omega_*.
\end{cases}
\]
These are limits of jointly Borel maps. Formula
\eqref{eq:flow-composition} therefore gives jointly Borel evaluations
$Y_t^\alpha(\xi;w)$. Since evaluations at rational times generate
the Borel $\sigma$-field of $C([0,T];\R^d)$, the path-valued map is
Borel. This proves \textup{(i)} and \textup{(iii)}.

\emph{3. Prove the support assertion \textup{(ii)}.}
Fix $\alpha_0=(x_0,\tau^0)$ and $u_0\in L^2([0,T];\R^m)$.
Choose $R>0$ so that $B=(-R,R)^{2d}$ contains the supports of the
fields in \eqref{eq:augmented-fields} and the compact family
$\{(\phi_t^0(\xi),\xi):t\in[0,T],\ \xi\in Q\}$ in its interior;
let $c_*>0$ be the distance of this family from $\partial B$.
On the compact flat torus $M=\R^{2d}/(2R\mathbb Z)^{2d}$, define
\[
\widetilde V_{i,\ell,k}\bigl(z\bmod(2R\mathbb Z)^{2d}\bigr)
=\sum_{p\in\mathbb Z^{2d}}V_{i,\ell,k}(z+2Rp),\qquad 0\le k\le m.
\]
The sum is locally finite, has the regularity of $V_{i,\ell,k}$,
and equals that field on $B$.
Write $H^r(TM)$ for vector fields whose periodic components have
square-integrable weak derivatives through order $r$.
With $s=d+2$, the coefficient assumptions give
\[
\begin{gathered}
\widetilde V_{i,\ell,0}\in H^{s+1}(TM),\qquad
\widetilde V_{i,\ell,k}\in H^{s+2}(TM)\ (1\le k\le m),\\
\sum_{k=1}^m\|\widetilde V_{i,\ell,k}\|_{H^{s+2}}^2<\infty.
\end{gathered}
\]
Thus \citet[Theorem~3, pp.~964--965]{Aida1990FlowSupport}, with all
later noise fields zero, applies on this compact manifold.
Since $s>\dim(M)/2+1$, Sobolev embedding gives, for each
fixed regime and interval on which $a$ is constant,
each $0<L\le T$, every smooth control $u$, and every $\delta>0$,
\[
\begin{aligned}
\PP\left\{\sup_{t\le L}
d_{C^1}(\widetilde\Psi_t,\widetilde\psi_t^u)<\delta\right\}&>0,\\
d_{C^1}(f,g)&=\sup_{z\in M}d_M(f(z),g(z))
+\sup_{z\in M}\|D_zf-D_zg\|,
\end{aligned}
\]
where $\widetilde\Psi$ and $\widetilde\psi^u$ are its stochastic and
controlled flows, $d_M$ is the flat-torus distance, and derivative
matrices use the trivialization $TM=M\times\R^{2d}$.
For $\|u\|_{L^1}+\|v\|_{L^1}\le R_0$, subtracting the controlled
and variational equations and applying Gronwall's inequality gives
\[
\sup_{t\le L}d_{C^1}(\widetilde\psi_t^u,\widetilde\psi_t^v)
\le C_{R_0}\|u-v\|_{L^1}
\le C_{R_0}\sqrt L\,\|u-v\|_{L^2}.
\]
The constant depends on $R_0,T$ and the fields' bounded $C^2$ norms.
Smooth approximation in $L^2$ therefore gives the
same positive-probability assertion for every $L^2$ control.

Use Step~2's partition
$\mathcal P(\tau^0,T)=\{0=s_0<\cdots<s_{L_0}=T\}$.
Let $\widetilde\Psi_j(t)$ and $\widetilde\psi_j(t)$ be the frozen
stochastic and controlled torus flows, started from the identity at
$s_{j-1}$, with the corresponding fields and the restriction of $u_0$.
The Borel events
\[
E_j(\delta)=\left\{\sup_{s_{j-1}\le t\le s_j}
d_{C^1}(\widetilde\Psi_j(t),\widetilde\psi_j(t))<\delta\right\}
\]
have probabilities $p_j(\delta)>0$ and depend on Brownian increments
on disjoint intervals. Compose the factors as in Step~2, including
the partially completed last interval, to obtain $\widetilde F_t$
and $\widetilde f_t$. The controlled composition has coordinates
$(\phi_t^0(\xi),\xi)$ at $(x_0,\xi)$, $\xi\in Q$.
The chain rule, the controlled factors' bounded $C^2$ norms, and
the stochastic factors' bounded first derivatives on $E_j(\delta)$,
$\delta\le1$, give
\[
\begin{aligned}
\sup_{t\le T}d_{C^1}(\widetilde F_t,\widetilde f_t)
&\le C_0\sum_{j=1}^{L_0}\sup_{s_{j-1}\le t\le s_j}
d_{C^1}(\widetilde\Psi_j(t),\widetilde\psi_j(t))\\
&\le C\delta
\quad\text{on }\bigcap_jE_j(\delta).
\end{aligned}
\]
Choose $\delta\le1$ with $C\delta<\min\{\varepsilon/2,c_*/2\}$.
The torus paths from $(x_0,\xi)$, $\xi\in Q$, then remain in the
chart $B$ and their second coordinate remains $\xi$.
There the periodic fields equal $V_{i,\ell,k}$, and $\chi_Q=1$ near $Q$.
Pathwise uniqueness identifies their first coordinates on a countable
dense set of $\xi$; Step~2's $C^1$ continuity extends this equality
to $Q$ outside one null set. Their parameter derivatives consequently
agree on $\operatorname{int}Q$, and on its boundary by continuity.
With the same first-coordinate projection as in Step~2, we obtain
\[
\|Y^{\alpha_0}-\phi^0\|_{Q,1}
\le2\sup_{t\le T}d_{C^1}(\widetilde F_t,\widetilde f_t)
\le2C\delta<\varepsilon
\quad\text{on }\bigcap_jE_j(\delta).
\]
Independent increments now give
\[
\PP\{\|Y^{\alpha_0}-\phi^0\|_{Q,1}<\varepsilon\}
\ge\PP\left(\bigcap_{j=1}^{L_0}E_j(\delta)\right)
=\prod_{j=1}^{L_0}p_j(\delta)>0.
\]
This proves \textup{(ii)}.

\emph{4. Identify the Cameron--Martin shifts and prove \textup{(iv)}.}
Fix $\alpha=(x,\tau)$ and $\xi\in Q$, and let
$T_{h_\xi}w=w+h_\xi$, where $\dot h_\xi(t)=a(t)\xi$.
By \citet[Theorem~3.41 and (3.13)]{Hairer2009SPDE},
\[
\frac{d(\PP\circ T_{h_\xi}^{-1})}{d\PP}(W)
=\exp\left\{\int_0^T\dot h_\xi(r)\cdot dW_r
                  -\tfrac12\|h_\xi\|_{\mathcal H}^2\right\}>0.
\]
Thus translation preserves null sets and transfers uniform convergence
in probability on $[0,T]$ by absolute continuity.
Take a nonanticipating Borel strong-solution representative of
$Y^\alpha(0)$; it agrees with the version in \textup{(i)} almost
surely, also after translation. Put
\[
H_r(w)=\sigma_{\lambda_r^{\boldsymbol\iota,\tau}}
                  (Y_r^\alpha(0;w)),\qquad
M_t(W)=\int_0^tH_r(W)\,dW_r,
\]
where $M$ is a Borel path-valued representative of the It\^o integral.
Nonanticipation makes $H(W+h_\xi)$ adapted; both $H(W)$ and
$H(W+h_\xi)$ are bounded and continuous between the prescribed
switching times.

Choose refining deterministic partitions
$\pi_N=(t_j^{(N)})_{j=0}^{L_N}$ containing all switching times,
with $|\pi_N|\to0$, and define
\[
\begin{aligned}
H_r^{(N)}(w)
&=\sum_{j=0}^{L_N-1}H_{t_j^{(N)}}(w)
                        \one_{[t_j^{(N)},t_{j+1}^{(N)})}(r),\\
S_N(t,w)
&=\sum_{j=0}^{L_N-1}H_{t_j^{(N)}}(w)
   [w(t_{j+1}^{(N)}\wedge t)-w(t_j^{(N)}\wedge t)].
\end{aligned}
\]
The translated sum is exactly the sum of the It\^o integral of
$H^{(N)}(W+h_\xi)$ against $W$ and its Lebesgue integral against
$\dot h_\xi$. Dominated convergence gives
\[
r_N:=\E\int_0^T
 \|H_r^{(N)}(W+h_\xi)-H_r(W+h_\xi)\|_{\mathrm{HS}}^2\,dr\to0.
\]
The It\^o isometry and Doob's maximal inequality bound the expected
squared uniform stochastic-integral error by $4r_N$.
The uniform finite-variation error is bounded by
$\int_0^T\|H_r^{(N)}(W+h_\xi)-H_r(W+h_\xi)\|
|\dot h_\xi(r)|\,dr$, which converges almost surely to zero by
dominated convergence, since $\dot h_\xi\in L^1$.
The same stochastic-integral estimate gives
$S_N(\cdot,W)\to M(W)$ uniformly in probability, hence
$S_N(\cdot,W+h_\xi)\to M(W+h_\xi)$ in that sense by
Cameron--Martin equivalence. Uniqueness of limits in probability
therefore yields, almost surely as paths,
\[
M_t(W+h_\xi)
=\int_0^tH_r(W+h_\xi)\,dW_r
 +\int_0^tH_r(W+h_\xi)\dot h_\xi(r)\,dr.
\]
Translating the unshifted equation now shows that
$\widetilde Y_t=Y_t^\alpha(0;W+h_\xi)$ solves the It\^o equation
with initial point $x$, drift
$b_{\lambda_t^{\boldsymbol\iota,\tau}}(\widetilde Y_t)
+\sigma_{\lambda_t^{\boldsymbol\iota,\tau}}(\widetilde Y_t)a(t)\xi$,
and diffusion $\sigma_{\lambda_t^{\boldsymbol\iota,\tau}}
(\widetilde Y_t)$. These coefficients are globally Lipschitz in the
state, with bounded $a(t)\xi$, so pathwise uniqueness identifies
$\widetilde Y=Y^\alpha(\xi;W)$ almost surely as paths.
This proves \textup{(iv)} for the fixed pair $(\alpha,\xi)$.
\end{proof}

\subsection{Coupled Lyapunov matrices for the network}\label{app:network-lyapunov}
We use the connected graph and matrices $D_e,Q^0$ from
Section~\ref{sec:examples}.

\begin{lem}[Positive definite solutions of the coupled Lyapunov equations]\label{lem:network-lyapunov}
		There are unique symmetric matrices $P_e$ solving
		\begin{equation}\label{eq:network-lyapunov}
			D_e^\top P_e+P_eD_e+
			\sum_{f\ne e}q^0_{ef}(P_f-P_e)=-I.
		\end{equation}
		Every $P_e$ is positive definite.
	\end{lem}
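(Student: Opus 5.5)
The plan is to read \eqref{eq:network-lyapunov} as the standard coupled Lyapunov equation of a Markov jump linear system with constant generator $Q^0$. We obtain $P_e$ as the integrated second moment of that system and get uniqueness from mean-square stability. Let $(\theta_t)$ be the continuous-time Markov chain on $\mathcal E$ with generator $Q^0$. Let $\Phi(t)$ solve $\dot\Phi=D_{\theta_t}\Phi$ with $\Phi(0)=I_N$, and define the linear operator $\mathcal L$ on tuples $(P_e)_{e\in\mathcal E}$ of symmetric matrices by
\[
(\mathcal L P)_e=D_e^\top P_e+P_eD_e+\sum_{f\ne e}q^0_{ef}(P_f-P_e).
\]
This is exactly the generator of $(x,e)\mapsto x^\top P_ex$ for the joint process $(\Phi(t)x,\theta_t)$.

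\emph{Step 1: mean-square stability.} We show $\E_e|\Phi(t)x|^2\le Ce^{-\beta t}|x|^2$ uniformly in $e$. Pathwise, $\frac{d}{dt}|\Phi x|^2=2x^\top\Phi^\top D_{\theta_t}\Phi x\le0$, since $D_e=-a\mathbf v_1\mathbf v_1^\top-\kappa_eL_e\preceq0$. Hence $|\Phi(t)x|$ is nonincreasing, and it stays constant on an interval only if $\Phi x$ lies in $\ker D_{\theta}$ there. Now $\ker D_e=\{y:y_1=0,\ y_j=y_k\}$ for $e=\{j,k\}$, and $\bigcap_e\ker D_e=\{0\}$ because the graph is connected and node~$1$ is pinned. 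Irreducibility of $Q^0$ then makes the chain visit every edge. This gives a strict contraction over a fixed time window with positive probability, uniformly over unit $x$ and all initial edges. The precise claim is $\sup_{|x|=1,e}\E_e|\Phi(T)x|^2<1$ for some $T$. Since the integrand is bounded by $1$, it suffices to show $\E_e|\Phi(T)x|^2<1$ for each fixed $(x,e)$ and then use continuity in $x$ together with compactness of the unit sphere. For fixed $(x,e)$, take the event that the chain visits all edges in a prescribed order within $[0,T]$ with short sojourns. If $|\Phi x|$ stayed equal to $1$ on that event, the trajectory would be constant and would lie in every $\ker D_f$, a contradiction. The Markov property at multiples of $T$ then gives exponential decay. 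I expect this step to be the main obstacle: the contraction must be made quantitative and uniform, since no single $D_e$ is Hurwitz.

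\emph{Step 2: existence and positivity.} Set $P_e=\int_0^\infty\E_e[\Phi(t)^\top\Phi(t)]\,dt$, which converges by Step~1, and is symmetric and positive semidefinite. Applying Dynkin's formula to $x^\top P_{\theta_t}\Phi(t)x$ (equivalently, using the Kolmogorov backward equation for $M_e(t)=\E_e[\Phi^\top\Phi]$, namely $\dot M_e=D_e^\top M_e+M_eD_e+\sum_fq^0_{ef}(M_f-M_e)$) and integrating from $0$ to $\infty$ gives $(\mathcal LP)_e=-I$. Positive definiteness holds because $\Phi(t)$ is invertible and close to $I$ for small $t$, so $x^\top P_ex\ge\int_0^\varepsilon\E_e|\Phi(t)x|^2dt>0$ for $x\ne0$.

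\emph{Step 3: uniqueness.} If $(\mathcal LS)=0$ for symmetric $S$, then Dynkin's formula gives $x^\top S_ex=\E_e[(\Phi(t)x)^\top S_{\theta_t}\Phi(t)x]$ for all $t$. The right side tends to $0$ by Step~1, so $S=0$. Hence $\mathcal L$ is injective on the finite-dimensional space of symmetric tuples, and the solution is unique.
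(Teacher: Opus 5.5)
Your proposal is correct and follows the paper's route: pathwise nonexpansion, the trivial intersection $\bigcap_e\ker D_e=\{0\}$, and a positive-probability visit to every edge give a one-step contraction. The Markov property at multiples of $T$ then yields exponential mean-square stability, from which the coupled Lyapunov solution follows. The differences are minor: you apply compactness to the quadratic form $x\mapsto\E_e|\Phi(T)x|^2$ on the unit sphere instead of first bounding $\|\Phi_T\|\le r_e<1$ on an open box of switching times, and you prove the last step directly (integrated second moment, backward equation, Dynkin-based uniqueness) where the paper cites Fang and Loparo (2002).
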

	\begin{proof}
The matrices $D_e$ are symmetric and negative semidefinite, and
connectivity gives
\[
\bigcap_{e\in\mathcal E}\ker D_e
=\{v:v_1=0,\ v_j=v_k\text{ for every }\{j,k\}\in\mathcal E\}
=\{0\}.
\]
Thus $\|e^{tD_e}\|\le1$, with $|e^{tD_e}v|=|v|$ for $t>0$
only when $v\in\ker D_e$, in which case $e^{tD_e}v=v$.
For any sequence $e_0,\ldots,e_m$ visiting every edge and positive
$t_0,\ldots,t_m$, put $F=e^{t_mD_{e_m}}\cdots e^{t_0D_{e_0}}$.
Equality $|Fv|=|v|$ forces every factor to fix $v$, hence $v=0$.
Compactness of the unit sphere gives $\|F\|<1$.

Let $\Lambda^0$ have generator $Q^0$, and let
$\dot\Phi_t=D_{\Lambda^0_t}\Phi_t$, $\Phi_0=I_N$.
Write $\E_e$ for expectation with $\Lambda^0_0=e$, and fix $T>0$. From each initial edge $e$, irreducibility supplies
a finite admissible sequence visiting all edges; assign its visits
positive durations summing to $T$.
Continuity gives an open switching-time box on which
$\|\Phi_T\|\le r_e<1$.
Its event has probability $p_e>0$, since the selected transition
rates and exponential holding-time densities are positive.
Set $r=\max_e r_e<1$ and $p=\min_e p_e>0$.
Since $\|\Phi_T\|\le1$ on every path,
\[
\E_e|\Phi_Tv|^2
\le\theta|v|^2,\qquad \theta=1-p(1-r^2)\in(0,1).
\]

The Markov property iterates this estimate at multiples of $T$,
and pathwise nonexpansion controls the intervening times. Hence
\[
\E_e|\Phi_tv|^2
\le\theta^{\lfloor t/T\rfloor}|v|^2
\le\theta^{-1}e^{-\beta t}|v|^2,\qquad
\beta=-T^{-1}\log\theta>0.
\]
The reference jump linear system is therefore exponentially
mean-square stable. Apply the coupled Lyapunov criterion of
\citet[(3.4), Theorems~3.2--3.3, pp.~1594--1595]{FangLoparo2002}
with drift matrices $D_e$, rate matrix $Q^0$, and forcing matrices $I_N$ to obtain the unique symmetric solution
of \eqref{eq:network-lyapunov}, with every $P_e$ positive definite.
\end{proof}

\fund No funding was received for this research.
\competing The author declares no competing interests.
\section*{A note on the use of AI}
The author used GPT-6 Astra for language revision and to assist in
exploring proof strategies and checking calculations.

The suggestions were reviewed and revised during manuscript preparation.
The author takes full responsibility for the results, references,
and final text.


\begin{thebibliography}{99}
\footnotesize

\bibitem[Aida(1990)]{Aida1990FlowSupport}
\textsc{Aida, S.} (1990). Support theorem for diffusion processes on Hilbert spaces.
\emph{Publications of the Research Institute for Mathematical Sciences}
\textbf{26,} 947--965.

\bibitem[Bakhtin and Hurth(2012)]{BakhtinHurth2012}
\textsc{Bakhtin, Y. and Hurth, T.} (2012). Invariant densities for dynamical systems
with random switching. \emph{Nonlinearity} \textbf{25,} 2937--2952.

\bibitem[Basak et~al.(1999)]{BasakBisiGhosh1999}
\textsc{Basak, G.~K., Bisi, A. and Ghosh, M.~K.} (1999). Stability of degenerate
diffusions with state-dependent switching.
\emph{Journal of Mathematical Analysis and Applications} \textbf{240,} 219--248.

\bibitem[Bena\"im et~al.(2015)]{BenaimEtAl2015}
\textsc{Bena\"im, M., Le Borgne, S., Malrieu, F. and Zitt, P.-A.} (2015).
Qualitative properties of certain piecewise deterministic Markov processes.
\emph{Annales de l'Institut Henri Poincar\'e, Probabilit\'es et Statistiques}
\textbf{51,} 1040--1075.

\bibitem[Ben Arous and L\'eandre(1991)]{BenArousLeandre1991}
\textsc{Ben Arous, G. and L\'eandre, R.} (1991). D\'ecroissance exponentielle du noyau
de la chaleur sur la diagonale. II.
\emph{Probability Theory and Related Fields} \textbf{90,} 377--402.

\bibitem[Boyd et~al.(2006)]{BoydEtAl2006}
\textsc{Boyd, S., Ghosh, A., Prabhakar, B. and Shah, D.} (2006). Randomized gossip
algorithms. \emph{IEEE Transactions on Information Theory} \textbf{52,} 2508--2530.

\bibitem[Cloez and Hairer(2015)]{CloezHairer2015}
\textsc{Cloez, B. and Hairer, M.} (2015). Exponential ergodicity for Markov processes
with random switching. \emph{Bernoulli} \textbf{21,} 505--536.

\bibitem[Fang and Loparo(2002)]{FangLoparo2002}
\textsc{Fang, Y. and Loparo, K.~A.} (2002). Stabilization of continuous-time jump
linear systems. \emph{IEEE Transactions on Automatic Control}
\textbf{47,} 1590--1603.

\bibitem[Hairer(2009)]{Hairer2009SPDE}
\textsc{Hairer, M.} (2009). An introduction to stochastic PDEs.
arXiv:0907.4178v1.

\bibitem[Hairer and Mattingly(2011)]{HairerMattingly2011Harris}
\textsc{Hairer, M. and Mattingly, J.~C.} (2011). Yet another look at Harris'
ergodic theorem for Markov chains. In \emph{Seminar on Stochastic
Analysis, Random Fields and Applications VI}, Progress in Probability,
Vol.~63. Birkh\"auser, pp.~109--117.

\bibitem[Han et~al.(2014)]{HanEtAl2014}
\textsc{Han, Y., Lu, W., Li, Z. and Chen, T.} (2014). Pinning dynamic systems of
networks with Markovian switching couplings and controller-node set.
\emph{Systems \& Control Letters} \textbf{65,} 56--63.

\bibitem[Herzog and Mattingly(2015)]{HerzogMattingly2015}
\textsc{Herzog, D.~P. and Mattingly, J.~C.} (2015). A practical criterion for
positivity of transition densities. \emph{Nonlinearity} \textbf{28,} 2823--2845.

\bibitem[Hu et~al.(2019)]{HuNualartSunXie2019}
\textsc{Hu, Y., Nualart, D., Sun, X. and Xie, Y.} (2019). Smoothness of density for
stochastic differential equations with Markovian switching.
\emph{Discrete and Continuous Dynamical Systems, Series B} \textbf{24,} 3615--3631.

\bibitem[Jin et~al.(2018)]{JinLinPei2018}
\textsc{Jin, M., Lin, Y. and Pei, M.} (2018). Asymptotic behavior of a regime-switching
SIR epidemic model with degenerate diffusion.
\emph{Advances in Difference Equations} \textbf{2018,} 84.

\bibitem[Kunita(1986)]{Kunita1986Flows}
\textsc{Kunita, H.} (1986). \emph{Lectures on Stochastic Flows and Applications}.
Tata Institute of Fundamental Research Lectures on Mathematics and Physics,
Vol.~78. Springer-Verlag.

\bibitem[Kunwai(2022)]{Kunwai2022}
\textsc{Kunwai, K.} (2022). On Foster--Lyapunov criteria for exponential ergodicity
of regime-switching jump diffusion processes with countable regimes.
\emph{Journal of Applied Probability} \textbf{59,} 167--186.

\bibitem[Lazi\'c and Sandri\'c(2022)]{LazicSandric2022}
\textsc{Lazi\'c, P. and Sandri\'c, N.} (2022). On subgeometric ergodicity of
regime-switching diffusion processes.
\emph{Nonlinear Analysis: Hybrid Systems} \textbf{46,} 101262.

\bibitem[Li and Xi(2022)]{LiXi2022TV}
\textsc{Li, J. and Xi, F.} (2022). Exponential ergodicity for regime-switching
diffusion processes in total variation norm.
\emph{Discrete and Continuous Dynamical Systems, Series B} \textbf{27,} 6125--6146.

\bibitem[Liu et~al.(2013)]{LiuLinChen2013}
\textsc{Liu, X., Lin, H. and Chen, B.~M.} (2013). Structural controllability of
switched linear systems. \emph{Automatica} \textbf{49,} 3531--3537.

\bibitem[Mertz et~al.(2024)]{MertzNersesyanRissel2024}
\textsc{Mertz, L., Nersesyan, V. and Rissel, M.} (2024). Exponential mixing of
constrained random dynamical systems via controllability conditions.
\emph{SIAM Journal on Control and Optimization} \textbf{62,} 3266--3287.

\bibitem[Nersesyan and Raqu\'epas(2021)]{NersesyanRaquepas2021}
\textsc{Nersesyan, V. and Raqu\'epas, R.} (2021). Exponential mixing under
controllability conditions for SDEs driven by a degenerate Poisson noise.
\emph{Stochastic Processes and their Applications} \textbf{138,} 26--55.

\bibitem[Salamon(2016)]{Salamon2016Measure}
\textsc{Salamon, D.~A.} (2016). \emph{Measure and Integration}.
EMS Textbooks in Mathematics. European Mathematical Society.

\bibitem[Scagliotti(2023)]{Scagliotti2023}
\textsc{Scagliotti, A.} (2023). A gradient flow equation for optimal control
problems with end-point cost.
\emph{Journal of Dynamical and Control Systems} \textbf{29,} 521--568.

\bibitem[Shao(2015)]{Shao2015Wasserstein}
\textsc{Shao, J.} (2015). Ergodicity of regime-switching diffusions in Wasserstein
distances. \emph{Stochastic Processes and their Applications}
\textbf{125,} 739--758.

\bibitem[Shirikyan(2017)]{Shirikyan2017}
\textsc{Shirikyan, A.} (2017). Controllability implies mixing I. Convergence in the
total variation metric. \emph{Russian Mathematical Surveys} \textbf{72,} 939--953.

\bibitem[Sun(2007)]{Sun2007Constrained}
\textsc{Sun, Z.} (2007). Reachability analysis of constrained switched linear
systems. \emph{Automatica} \textbf{43,} 164--167.

\bibitem[Sun et~al.(2002)]{SunGeLee2002}
\textsc{Sun, Z., Ge, S.~S. and Lee, T.~H.} (2002). Controllability and reachability
criteria for switched linear systems. \emph{Automatica} \textbf{38,} 775--786.

\bibitem[Tong and Majda(2016)]{TongMajda2016}
\textsc{Tong, X.~T. and Majda, A.~J.} (2016). Moment bounds and geometric ergodicity
of diffusions with random switching and unbounded transition rates.
\emph{Research in the Mathematical Sciences} \textbf{3,} 41.

\bibitem[Xi and Zhu(2017)]{XiZhu2017}
\textsc{Xi, F. and Zhu, C.} (2017). On Feller and strong Feller properties and
exponential ergodicity of regime-switching jump diffusion processes
with countable regimes.
\emph{SIAM Journal on Control and Optimization} \textbf{55,} 1789--1818.

\bibitem[Xi et~al.(2021)]{XiZhuWu2021}
\textsc{Xi, F., Zhu, C. and Wu, F.} (2021). On strong Feller property, exponential
ergodicity and large deviations principle for stochastic damping
Hamiltonian systems with state-dependent switching.
\emph{Journal of Differential Equations} \textbf{286,} 856--891.
\end{thebibliography}
\end{document}